\documentclass[11pt]{article}
\usepackage{epsfig,enumerate,amsmath,amsfonts,amsbsy,amssymb,amsthm,mathrsfs,lineno,ifpdf}
\usepackage[mathscr]{euscript}
\usepackage[numbers]{natbib}
\usepackage{hyperref, url}
\usepackage{setspace,graphicx}
\usepackage[usenames,dvipsnames]{pstricks}
\usepackage[margin=1.2in]{geometry}
\newtheorem{theorem}{Theorem}
\newtheorem{lemma}{Lemma}

\newtheorem{corollary}{Corollary}

\newtheorem{conjecture}{Conjecture}

\newtheorem{problem}{Problem}
\newtheorem{claim}{Claim}

\usepackage[normalem]{ulem}
\usepackage{enumitem}
\usepackage{etoolbox}

\let\oldenumerate\enumerate
\renewcommand{\enumerate}{
	\oldenumerate
	\setlength{\itemsep}{1.5pt}
	\setlength{\parskip}{0pt}
	\setlength{\parsep}{0pt}
}

\numberwithin{equation}{section}

\hypersetup{
	colorlinks=true,
	linkcolor=blue,
	filecolor=magenta,
	pdfpagemode=FullScreen,
}
\begin{document}
	
	\title{Perfect divisibility and perfect-Pollyanna in bull-free graphs}
	\author{Ran Chen$^{1}$\footnote{Email: nnuchen@foxmail.com}, \;Paras Vinubhai Maniya$^{2}$\footnote{Corresponding author; Email: maniyaparas9999@gmail.com}, \; Di Wu$^{3}$\footnote{Email: diwu@njit.edu.cn}, \; Junran Yu$^{1}$\footnote{Email: nnuyu@foxmail.com}\\
		\small $^1$Institute of Mathematics, School of Mathematical Sciences\\
		\small Nanjing Normal University, 1 Wenyuan Road,  Nanjing, 210023,  China\\
		\small $^2$Department of Mathematics and Computing\\
		\small Indian Institute of Technology (ISM), Dhanbad, India\\
		\small $^3$Department of Mathematics and Physics\\
		\small Nanjing Institute of Technology, Nanjing 211167, Jiangsu, China\\
	}
	\date{}
	\maketitle

	\begin{abstract}
	A graph $G$ is {\em perfectly divisible} if, for each induced subgraph $H$ of $G$, $V(H)$ can be partitioned into $A$ and $B$ such that $H[A]$ is perfect and $\omega(H[B])<\omega(H)$. A {\em bull} is a graph consisting of a triangle with two disjoint pendant edges. Ho\`ang [Discrete Math. 349 (2026) 114809] proposed four conjectures: 1. $P_5$-free graphs are perfectly divisible; 2. Odd hole-free graphs are perfectly divisible; 3. Even hole-free graphs are perfectly divisible; and 4. $4K_1$-free graphs are perfectly divisible. Karthick {\em et al.} [Electron. J. Combin. 29 (2022) P3.19] proposed a conjecture: Fork-free graphs are perfectly divisible. In this paper, we prove that all of five conjectures above hold for bull-free graphs. Our results also generalize some results of Chudnovsky and Sivaraman [J. Graph Theory 90 (2019) 54--60] and Karthick {\em et al.} [Electron. J. Combin. 29 (2022) P3.19].
	
	 We say that a class ${\cal C}$ is {\em perfect-Pollyanna} if ${\cal C}\cap {\cal G}$ is perfectly divisible for any hereditary class ${\cal G}$ in which each triangle-free graph  is 3-colorable. Let $H\in\{\text{house, hammer, diamond}\}$. In this paper, we prove that the class of $(\text{bull}, H)$-free graphs is perfect-Pollyanna. Let ${\cal C}$ be the class of $(\text{bull}, H)$-free graphs. This implies that ${\cal C}\cap {\cal G}$ is perfectly divisible if and only if all of triangle-free graphs in ${\cal G}$ are perfectly divisible.  As corollaries, we show that $(\text{bull},{\cal H})$-free graphs are perfectly divisible, where ${\cal H}$ is one of $\{P_{11},C_4\},\{P_{14},C_5,C_4\}$, and $\{P_{17},C_6,C_5,C_4\}$.
	\end{abstract}

	\begin{flushleft}
		{\em Key words and phrases:} Perfectly divisibility; bull-free graphs; perfect-Pollyanna.\\
		{\em AMS 2020 Subject Classifications:}  05C15, 05C75\\
	\end{flushleft}
	
	\newpage
	
	\section{Introduction}\label{introdction}

In this paper, we consider only finite and simple graphs. We follow \cite{BM08} for basic graph-theoretic notation and terminology. Let $P_k$ and $C_k$ be a {\em path} and a {\em cycle} on $k$ vertices respectively. We say that a graph $G$ {\em contains} a graph $H$ if $H$ is isomorphic to an induced subgraph of $G$. A graph $G$ is $H$-{\em free} if it does not contain $H$. Analogously, for a family $\mathcal{H}$ of graphs, we say that $G$ is $\mathcal{H}$-free if no member of $\mathcal{H}$ is isomorphic to an induced subgraph of $G$ .

For a given positive integer $k$, we use $[k]$ to denote the set $\{1,\ldots, k\}$. A {\em k-coloring} of a graph $G=(V(G),E(G))$ is a mapping $f$: $V(G)\rightarrow [k]$ such that $f(u)\neq f(v)$ whenever $uv\in E(G)$. We say that $G$ is {\em k-colorable} if $G$ admits a $k$-coloring. The {\em chromatic number} of $G$, denoted by $\chi(G)$, is the smallest positive integer $k$ such that $G$ is $k$-colorable. A {\em clique} (resp. {\em stable set}) of $G$ is a set of pairwise adjacent (resp. non-adjacent) vertices in $G$. The {\em clique number} of $G$, denoted by $\omega(G)$, is the maximum cardinality among all cliques of  $G$. 
	
The concept of binding functions was introduced by Gy\'{a}rf\'{a}s \cite{G75} in 1975. A class of graphs is said to be {\em hereditary} if it is closed under isomorphism and induced  subgraphs. For a hereditary class $\mathcal{F}$, if there exists a function $f$ such that $\chi(H)\le f(\omega(H))$ for every $H\in\mathcal{F}$, then $\mathcal{F}$ is called \emph{$\chi$-bounded} and $f$ a \emph{binding function}.
	
An induced cycle of length $k\ge 4$ is called a {\em hole}, and $k$ is the {\em length} of the hole. A {\em k-hole} is a hole of length $k$. A hole is {\em odd} if $k$ is odd, and {\em even} otherwise. An {\em antihole} is the complement graph of a hole. A graph $G$ is said to be {\em perfect} if $\chi(H)=\omega(H)$ for every induced subgraph $H$ of $G$. The Strong Perfect Graph Theorem \cite{CRST06} was  established by Chudnovsky {\em et al.} in 2006.
	
\begin{theorem}[\cite{CRST06}]\label{perfect}
A graph $G$ is perfect if and only if $G$ is $($odd hole, odd antihole$)$-free.
\end{theorem}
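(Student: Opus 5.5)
The statement is the Strong Perfect Graph Theorem, which the paper only cites, so what follows is the global architecture I would follow (that of Chudnovsky, Robertson, Seymour and Thomas), not a self-contained argument. Call a graph \emph{Berge} if it contains no odd hole and no odd antihole.

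\emph{Necessity.} This direction is routine. An odd hole $C_{2k+1}$ with $k\ge 2$ has $\omega=2$ but $\chi=3$. Its complement has $\omega=k$, while $\alpha=2$ forces $\chi\ge \lceil (2k+1)/2\rceil = k+1$. Since perfection is hereditary, a perfect graph contains neither.

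\emph{Sufficiency.} I would argue by a minimal counterexample. Let $G$ be Berge and imperfect with $|V(G)|$ minimum; then every proper induced subgraph of $G$ is perfect. Lov\'asz's (weak) Perfect Graph Theorem says a graph is perfect if and only if its complement is. Hence $\overline{G}$ is also a minimal imperfect graph, and $\overline{G}$ is Berge because the Berge condition is self-complementary. This symmetry lets every decomposition be applied to $G$ or to $\overline{G}$. The plan then has two independent halves.
\begin{enumerate}
\item[(a)] \textbf{Decomposition theorem.} Every Berge graph is either basic or admits a structural decomposition. Basic means bipartite, the complement of a bipartite graph, the line graph of a bipartite graph, the complement of such a line graph, or a double split graph. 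The decompositions are a proper $2$-join, the complement of a proper $2$-join, a proper homogeneous pair ($M$-join), or a balanced skew partition.
\item[(b)] \textbf{No minimal imperfect graph decomposes.} First, each basic class is perfect, by K\H{o}nig's theorems plus a direct check for double split graphs. Second, a minimal imperfect graph admits none of the four decompositions:
\begin{enumerate}
\item[(i)] For $2$-joins, use Cornu\'ejols--Cunningham: glue optimal colourings of the two perfect blocks.
\item[(ii)] For complements of $2$-joins, apply (i) to $\overline{G}$ via Lov\'asz.
\item[(iii)] For homogeneous pairs, use Chv\'atal--Sbihi.
\item[(iv)] For skew partitions, this is Chv\'atal's skew partition conjecture. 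It is provable here because the partition is balanced: the parity conditions on paths and antipaths through the cut let one combine colourings of the pieces.
\end{enumerate}
\end{enumerate}
Combining (a) and (b), a minimal imperfect Berge graph cannot exist, so every Berge graph is perfect.

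\emph{Proof of (a).} I would proceed by cases on the presence of increasingly specific Berge configurations, each handled by showing that its maximal occurrence forces a decomposition:
\begin{enumerate}
\item[1.] a proper wheel, or the line graph of a bipartite subdivision of $K_4$, or a prism;
\item[2.] an even prism, or a long prism;
\item[3.] a double diamond;
\item[4.] finally, graphs free of all of these, where one shows directly that $G$ or $\overline{G}$ is bipartite or that $G$ has a balanced skew partition.
\end{enumerate}
At each stage one first proves a ``the configuration is maximal'' lemma, then analyses the attachments of the remaining vertices, using the absence of odd holes and odd antiholes to constrain them.

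\emph{Main obstacle.} The hard part is (a), and specifically the step showing that a Berge graph containing a substantial wheel but no line-graph or prism configuration has a balanced skew partition. Controlling the parities of the paths and antipaths that create the balance is what consumes most of the original 150-page argument. Part (b), by contrast, reduces to known lemmas once balancedness is available.
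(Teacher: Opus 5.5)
Your proposal matches the paper, which gives no proof of this statement and simply cites Chudnovsky--Robertson--Seymour--Thomas. Your outline is a faithful roadmap of that cited argument: the routine necessity direction; a minimal Berge imperfect graph, with $\overline{G}$ also minimal imperfect by Lov\'asz; the decomposition into basic classes, proper $2$-joins in $G$ or $\overline{G}$, proper homogeneous pairs and balanced skew partitions; and the results of Cornu\'ejols--Cunningham, Chv\'atal--Sbihi and the balanced case of the skew partition conjecture, showing that no minimal imperfect graph decomposes. The only blemish is cosmetic: in the CRST proof the line graphs of bipartite subdivisions of $K_4$ and the prisms are handled before the proper wheels, so wheels should not appear in your stage~1 list, as your own ``main obstacle'' paragraph in fact indicates.
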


	\begin{figure}[htbp]
		\begin{center}
			\includegraphics[width=17cm]{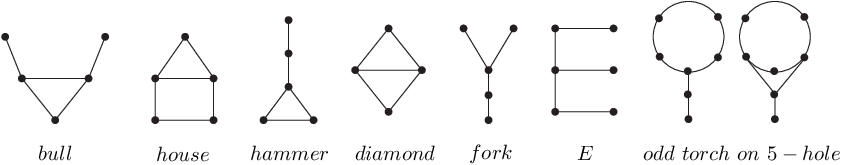}
		\end{center}
		\vskip -25pt
		\caption{Illustration of some small graphs.}
		\label{fig-1}
	\end{figure}
	
A {\em bull} is a graph consisting of a triangle with two disjoint pendant edges.  A {\em house} is the complement of $P_5$, a {\em hammer} is a graph obtained by identifying an end vertex of a $P_3$ with a vertex of a triangle. A {\em diamond} is a graph consisting of two triangles sharing exactly one common edge. A {\em fork} is a graph obtained from $K_{1,3}$ by subdividing an edge once. An {\em E} is a graph obtained from an induced $P_5=v_1v_2v_3v_4v_5$ by adding an new vertex adjacent to only $v_3$ in the $P_5$. An {\em odd torch} is a graph obtained from an odd hole by adding an edge such that one endpoint is not adjacent to any vertex of the odd hole, and the other endpoint is adjacent to the vertices of a stable set of the odd hole, for example, odd torch on a 5-hole is shown in Figure~\ref{fig-1}.

 	Let the vertices of $V(G)$ be partitioned into two sets $A$ and $B$. We say that $(A,B)$ is a {\em good} partition of $G$ if $G[A]$ is perfect and $\omega(G[B])<\omega(G)$. A graph $G$ is {\em perfectly divisible} if for every induced subgraph $H$ contains a good partiton. This concept was proposed by Ho\`{a}ng~\cite{H22}. By a simple induction on $\omega(G)$, every perfectly divisible graph $G$ satisfies $\chi(G)\le\binom{\omega(G)+1}{2}$ \cite{CS19}. If $G$ is non-perfectly divisible but every proper induced subgraph of $G$ is perfectly divisible, then $G$ is called {\em minimally non-perfectly  divisible}, abbreviated as MNPD. 
 	
	Let $G$ be a graph and let $w$ be a weight function on $V(G)$. We use {\em $\omega_w(G)$} to denote the maximum weight of a clique in $G$. A graph $G$ is {\em perfectly weight divisible} \cite{CS19,H22} if for every positive integer weight function $w$ on $V(G)$, and for every induced subgraph $H$ of $G$, there is a partition of $V(H)$ into $P$ and $W$ such that $H[P]$ is perfect and $\omega_w(H[W])<\omega_w(H)$.  If $G$ is non-perfectly weight divisible but every proper induced subgraph of $G$ is perfectly weight divisible, then $G$ is called {\em minimally non-perfectly weight divisible}, abbreviated as MNWD. 
		
	It is certain that if a graph $G$ is perfectly weight divisible, then $G$ is perfectly divisible. We say that a class of graphs ${\cal G}$ is perfectly weight divisible (resp. perfectly  divisible) if every graph in ${\cal G}$ is perfectly weight divisible (resp. perfectly divisible).
		
	Perfectly divisibility has been widely studied and discussed for a long time.  Ho\`{a}ng~\cite{H22} proved that each (banner, odd hole)-free graph is perfectly divisible, where banner is a graph obtained from $C_4$ by attaching a pendant edge to a vertex of $C_4$. Recently, Ho\`{a}ng~\cite{H26} studied the structure underlying perfect divisibility and proposed four important and interesting conjectures.
	\begin{conjecture}[\cite{H26}]\label{con-1}
		Every $P_5$-free graph is perfectly divisible.
	\end{conjecture}
	\begin{conjecture}[\cite{H26}]\label{con-2}
		Every odd hole-free graph is perfectly divisible.
	\end{conjecture}
	\begin{conjecture}[\cite{H26}]\label{con-3}
		Every even hole-free graph is perfectly divisible.
	\end{conjecture}
	\begin{conjecture}[\cite{H26}]\label{con-4}
		Every $4K_1$-free graph is perfectly divisible.
	\end{conjecture}
	
		Karthick {\em et al.}~\cite{KKS22} studied the perfect divisibility of fork-free graphs and proved that some subclasses of fork-free graphs are perfect divisible. Moreover, they proposed the following conjecture, which remains open.
		
	\begin{conjecture}[\cite{KKS22}]\label{con-5}
		Every fork-free graph is perfectly divisible.
	\end{conjecture}

		Chudnovsky and Sivaraman~\cite{CS19} proved that every (bull, odd hole)-free graph and every (bull, $P_5$)-free graph is perfectly divisible. Karthick {\em et al.}~\cite{KKS22} proved that every (bull, fork)-free graph is perfectly divisible. In this paper, we show that every (bull, odd torch)-free graph is perfectly divisible. Notice that every odd torch contains a $P_5$, odd hole, fork and the graph E, and so the result generalizes their results. This implies that Conjectures~\ref{con-1}, \ref{con-2}, and \ref{con-5} hold for bull-free graphs. 
		
			\begin{theorem}\label{oddballoon}
			Let $G$ be a $(bull, odd~torch)$-free graph. Then $G$ is perfectly weight divisible, and hence $G$ is perfectly divisible.
		\end{theorem}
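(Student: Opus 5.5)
We argue by minimal counterexample. Let $G$ be a (bull, odd torch)-free graph that is minimally non-perfectly weight divisible (MNWD): every proper induced subgraph is perfectly weight divisible, but for some positive integer weight function $w$ on $V(G)$ there is no partition $(P,W)$ with $G[P]$ perfect and $\omega_w(G[W])<\omega_w(G)$. The class is hereditary, so it suffices to derive a contradiction from structural properties of such a $G$. We use the standard facts about MNWD graphs from Chudnovsky--Sivaraman \cite{CS19} and Ho\`ang \cite{H22}; we prove them quickly if they are not stated.

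\begin{enumerate}[label=(\roman*)]
\item An MNWD graph has no homogeneous set $X$ with $1<|X|<|V(G)|$. Substituting a perfectly weight divisible graph for a vertex preserves the property, by replicating weights and using the Lov\'asz replication lemma.
\item For every vertex $v$, the set $V(G)\setminus N(v)$ does not induce a perfect graph. Otherwise, take $P$ to be the non-neighbourhood of $v$, which contains $v$. Every maximum-weight clique meets $P$: a clique inside $N(v)$ extends by $v$, and $w(v)>0$. So $(P, N(v))$ would be a good partition.
\end{enumerate}

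Hence $G$ is not perfect. By Theorem~\ref{perfect}, $G$ contains an odd hole or an odd antihole. An odd antihole on at least $7$ vertices contains a bull? It is cleaner to note that $\overline{C_{2k+1}}$ with $k\ge3$ contains $\overline{P_5}$, the house. We handle this case separately, using only that $G$ is not perfect and the non-neighbourhood of each vertex is imperfect.

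The key step is the following. Fix a vertex $v$. By (ii), $G-N[v]$ contains an odd hole or an odd antihole $C$.
\begin{itemize}
\item \emph{Case $C$ is an odd hole $v_1\cdots v_{2k+1}$.} Since $G$ is connected (a disconnected $G$ gives a homogeneous component), consider a shortest path from $v$ to $C$. Take $u\in N(v)$, or more generally a vertex at distance one from $C$ on this path, with $N(u)\cap C\neq\emptyset$. Bull-freeness applied to $u$ and the odd hole gives the classical constraint: either $N(u)\cap C$ is a stable set, or $u$ is adjacent to all of $C$, or $N(u)\cap C$ is a set of consecutive vertices of a restricted type. If $N(u)\cap C$ is stable and $v$ is anticomplete to $C$, then $C+u+v$ is an odd torch, a contradiction. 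So every neighbour of $v$ that sees $C$ has non-stable trace on $C$. Now use the structure theory of bull-free graphs with odd holes: the vertices that are complete to, or see long consecutive stretches of, an odd hole are forced into homogeneous pieces. We aim to show that the set of vertices with non-stable traces, together with $C$, yields either a bull or a nontrivial homogeneous set, contradicting (i).
\item \emph{Case $C$ is an odd antihole.} For $\overline{C_5}=C_5$ this is the previous case. For longer antiholes we use the known result that in bull-free graphs a vertex attached to an odd antihole is complete, anticomplete, or adjacent to a homogeneous-type set. Combined with the path from $v$, this should again produce a homogeneous set or a bull. Alternatively, we could apply (ii) to a vertex of the antihole to reduce to an odd hole in its non-neighbourhood.
\end{itemize}

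We expect the main obstacle to be the odd-hole case. Here one must pin down exactly which vertex attachments to an odd hole are possible in a (bull, odd torch)-free graph, and show that they force a homogeneous set. The first thing to try is a maximal connected set $S$ anticomplete to $C$ containing $v$, together with the set $N(S)$. We show that every vertex of $N(S)$ has the same trace on $C$ and is complete to $S$, using bull-freeness on paths $s\!-\!x\!-\!c$. Then $S$ is a homogeneous set, and (i) forces $|S|=1$. This is similar to the argument of Chudnovsky and Sivaraman for (bull, odd hole)-free graphs. The odd torch exclusion replaces the odd hole exclusion precisely at the point where the trace of a neighbour of $S$ would be stable.
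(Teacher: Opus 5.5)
There is a genuine gap: the one case that actually needs an idea is never handled. Take an edge $xy$ with $x$ anticomplete to the odd hole $C$ and $y\in N(V(C))$. Bull-freeness alone gives more than your ``classical constraint''. Suppose $y$ has two consecutive neighbours on $C$ but is not complete to $C$. Walk along $C$ to the first non-neighbour $v_n$; then $\{y,v_{n-2},v_{n-1},v_n,x\}$ is a bull. So $N_C(y)$ is either stable, which gives an odd torch, or all of $V(C)$. Everything therefore hinges on excluding a vertex complete to an odd hole, and local bull arguments cannot do this. Consider a $5$-hole $C$, a vertex $y$ complete to it, and a pendant path $y\,s\,s'$. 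This graph is bull-free and odd-torch-free, and $y\in N(S)$ is not complete to $S=\{s,s'\}$. So your final step, ``every vertex of $N(S)$ is complete to $S$, using bull-freeness on paths $s$--$x$--$c$'', fails as a local statement. Moreover, even if $S$ were homogeneous, concluding $|S|=1$ gives no contradiction by itself.

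The missing ingredient is local perfectness. By the Chudnovsky--Safra theorem (Lemma~\ref{bullfree}), a bull-free graph with no homogeneous set has, for every vertex $u$, either $G[N(u)]$ or $G[M(u)]$ perfect. Your (i) and (ii) rule out homogeneous sets and a perfect $G[M(u)]$, so $G[N(u)]$ is perfect for every $u$. Hence no vertex is complete to an odd hole, the trace of $y$ is stable, and $C+y+x$ is an odd torch. This is exactly Lemma~\ref{L-11} together with the paper's short proof.

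Local perfectness is also what handles the antihole case, where your proposal is not yet an argument. The ``known result'' you invoke is not a precise statement. Applying (ii) to an antihole vertex may just produce another antihole, since long odd antiholes contain no odd holes. The house remark is irrelevant because houses are not excluded.

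The paper's Lemma~\ref{oddantihole} instead argues as follows. Let $H$ be an odd antihole with $|V(H)|\ge 7$, and let $y$ have a neighbour $x$ anticomplete to $H$. Then $y$ has a stable trace on $H$: otherwise a chain of bull arguments forces $y$ complete to $H$, contradicting local perfectness. A single neighbour $v_1$ yields the bull $\{v_1,v_3,v_5,y,v_2\}$. So the trace is a non-edge $\{v_1,v_2\}$ of $H$, and then $\{v_{k-3},v_{k-1},v_k,v_1,y\}$ is a bull.
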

		
		We also show that Conjectures~\ref{con-3} and \ref{con-4} holds for bull-free graphs as following. 
		
		\begin{theorem}\label{evenhole}
			Let $G$ be a  $($bull, even hole$)$-free graph. Then $G$ is perfectly weight divisible, and hence $G$ is perfectly divisible.
		\end{theorem}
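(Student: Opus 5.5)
We argue by minimal counterexample: suppose $G$ is a (bull, even hole)-free graph that is MNWD. Fix a positive integer weight function $w$ for which $G$ itself has no partition $(P,W)$ with $G[P]$ perfect and $\omega_w(G[W])<\omega_w(G)$; every proper induced subgraph of $G$ is perfectly weight divisible. The goal is to reach a contradiction by building a good weighted partition of $G$.

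\textbf{Step 1: reduction to a vertex.} First we establish the standard reduction used by Chudnovsky and Sivaraman~\cite{CS19} for bull-free graphs. If $v$ is a vertex for which $G[N(v)]$ is perfect and every maximum-weight clique of $G-N[v]$ has weight less than $\omega_w(G)$, we try to combine a good partition of $G-N[v]$ with $N(v)$.

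This combination is delicate, so the safer route is the following:
\begin{itemize}
\item[(a)] show that $G$ has no homogeneous set $X$ with $1<|X|<|V(G)|$, since otherwise substituting the weighted clique number of $G[X]$ as a single vertex weight reduces to a smaller graph (this is exactly why we work with weights);
\item[(b)] show that $G$ has no clique cutset;
\item[(c)] then exhibit a vertex $v$ such that $G[N(v)]$ is perfect and $G-N(v)$ is perfect, and take $P=V(G)\setminus N(v)$ and $W=N(v)$.
\end{itemize}
Every maximum-weight clique either contains $v$, and then so does $P$, or meets $N(v)$. Adjusting: put $A=\{v\}\cup(V\setminus N[v])$-side vertices appropriately so that every maximum-weight clique loses a vertex from $W$. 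The cleanest version is to find a vertex $v$ with $G-N[v]$ and $N(v)$ both perfect and no maximum-weight clique inside $G-N[v]$. Then $P=\{v\}\cup (V\setminus N[v])$ is perfect (it is $v$ plus a perfect graph, with $v$ isolated), and $W=N(v)$ meets no full maximum clique. Any maximum-weight clique $K$ inside $N(v)$ extends by $v$, a contradiction with maximality, so $\omega_w(W)<\omega_w(G)$.

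\textbf{Step 2: perfection of neighbourhoods.} In a bull-free graph, an odd hole $C$ in $N(v)$ is impossible when $|C|\ge 5$, since then $v$ plus $C$ would contain a bull. An odd antihole of length at least $7$ contains a $C_4$, which is an even hole. Its complement $\overline{C_5}=C_5$ again gives a bull with $v$. Hence $G[N(v)]$ is perfect for every $v$, using Theorem~\ref{perfect}.

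\textbf{Step 3: perfection of non-neighbourhoods (main obstacle).} Since $G$ is even-hole-free, it is $C_4$-free, so it has no antihole of length at least $6$. The only obstructions to perfection are therefore odd holes. We must find $v$ such that $G-N[v]$ contains no odd hole and no maximum-weight clique.

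The plan is to use the structure of (bull, odd hole)-containing even-hole-free graphs. Take an odd hole $C$ and analyse how vertices attach to it. In a bull-free, $C_4$-free graph, a vertex with neighbours on $C$ is adjacent either to one vertex, to two consecutive vertices, to three consecutive vertices, or to all of $C$; other attachment patterns create a bull or an even hole. Using the absence of homogeneous sets and clique cutsets, I would try to show that some vertex $v$ dominates every odd hole, for instance a vertex complete to an odd hole, by exploiting that the components of the non-neighbours must attach via cliques.

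This domination step is where I expect the real difficulty. If it fails, the fallback is to apply induction on $G-N[v]$ with a good partition $(P',W')$, and set $P=P'\cup\{v\}$ and $W=W'\cup N(v)$. We then need to verify that cliques meeting both $N(v)$ and $W'$ lose weight, which is where the bull-freeness and the absence of homogeneous sets must be used again.
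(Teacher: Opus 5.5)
Your proposal has a genuine gap. Everything rests on the ``domination step'' of Step~3, which you leave open, and that step restates the theorem rather than reducing it. As you note, if some $v$ had $G[M(v)]$ perfect, then $(\{v\}\cup M(v),N(v))$ would be a good weighted partition. That is exactly why $G[M(v)]$ is imperfect for \emph{every} vertex of an MNWD graph (Lemma~\ref{MNWD}), so producing such a $v$ is the same as proving that $G$ is not MNWD.

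The supporting pieces do not hold up either:
\begin{itemize}
\item Your suggested candidate, a vertex complete to an odd hole, cannot occur in a bull-free MNWD graph, since its neighbourhoods are perfect.
\item The fallback $P=P'\cup\{v\}$, $W=W'\cup N(v)$ breaks on cliques meeting both $N(v)$ and $W'$.
\item The no-clique-cutset claim~(b) is unsupported. The paper proves it only for $4K_1$-free MNWD graphs, and the proof uses $4K_1$-freeness.
\item Step~2 is false as written. A vertex complete to an odd hole never produces a bull: it has degree $4$ in any five-vertex set containing it, while a bull has maximum degree $3$. The $5$-wheel is bull-free and even-hole-free, and its hub's neighbourhood induces $C_5$.
\end{itemize}
Local perfection of a bull-free MNWD graph is true, but it comes from Lemma~\ref{bullfree}, combined with Lemma~\ref{homogeneous} and the imperfection of each $G[M(v)]$.

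The missing idea is to go after triangles rather than odd holes. Even-hole-free graphs are house-free, so by Lemma~\ref{P-2} a (bull, even hole)-free MNWD graph is triangle-free. In outline: otherwise a vertex $t$ of a triangle has an odd hole $C_2$ in $G[M(t)]$. Choose the triangle, $t$, and a shortest path from the triangle to $C_2$ minimally. Then the non-homogeneity (Lemma~\ref{homogeneous}) of a component of $G[N(a)\cap N(V(C_2))]$ (Claim~\ref{cc-2}) or of $G[(N(x)\setminus\{p_2\})\cap M(V(C_2))]$, combined with Lemmas~\ref{L-11} and~\ref{P-1}, yields an induced bull or house.

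Once the counterexample is triangle-free, bull-freeness is vacuous and everything falls on even-hole-freeness. The paper handles this with the external bound $\chi\le 2\omega-1$ (Lemma~\ref{P-000}), giving $\chi(G)\le 3$. A $3$-colourable triangle-free graph is perfectly weight divisible (Lemma~\ref{3-coloring}), contradicting minimality.

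Excluding longer even holes must enter essentially somewhere. Bull-freeness plus $C_4$-freeness is not enough: graphs of girth at least $5$ are (bull, $C_4$)-free and can be $4$-chromatic, hence not perfectly divisible. Nothing in your outline plays the role of this colouring bound.
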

		
		\begin{theorem}\label{4K1}
			Let $G$ be a $(bull, 4K_1)$-free graph. Then $G$ is perfectly weight divisible, and hence $G$ is perfectly divisible.
		\end{theorem}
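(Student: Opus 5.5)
The plan is to take a minimal counterexample, i.e.\ an MNWD graph $G$ that is (bull, $4K_1$)-free, and derive a contradiction by exhibiting a good weighted partition. The basic device is the following observation. Fix a positive weight function $w$ and a vertex $v$, and write $N(v)$ and $M(v)=V(G)\setminus(N(v)\cup\{v\})$. If $G[M(v)]$ is perfect, take $P=\{v\}\cup M(v)$ and $W=N(v)$. Then $G[P]$ is perfect, because $v$ is isolated in it. Moreover every clique $K\subseteq N(v)$ extends to the clique $K\cup\{v\}$, so $\omega_w(G[W])\le \omega_w(G)-w(v)<\omega_w(G)$. The same applies to every induced subgraph, and by minimality only $G$ itself matters. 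So it suffices to find a vertex $v$ of $G$ whose non-neighbourhood induces a perfect graph.

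Since $G$ is $4K_1$-free, each $G[M(v)]$ is $3K_1$-free. By Theorem~\ref{perfect}, the only obstructions to perfection of $M(v)$ are therefore a $5$-hole $C$ or an odd antihole $\overline{C_{2k+1}}$ with $k\ge 3$; longer odd holes contain $3K_1$. Before the case analysis I would establish two standard reductions for MNWD graphs, as in Chudnovsky--Sivaraman~\cite{CS19}:
\begin{itemize}
\item $G$ has no proper homogeneous set of size at least $2$. The argument substitutes a good partition of the homogeneous set into a good partition of the quotient graph, redistributing weights as the maximum weighted clique of the set.
\item $G$ has no clique cutset.
\end{itemize}
With these in hand, assume for contradiction that for every vertex $v$ the set $M(v)$ contains such an obstruction $H$.

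The core is a structural analysis of how vertices attach to an obstruction $H\subseteq M(v)$, using bull-freeness and $4K_1$-freeness.
\begin{itemize}
\item For a $5$-hole $C=c_1\cdots c_5$, the vertex $v$ is anticomplete to $C$. Every vertex of $N(v)$ must then be adjacent to $C$ in a restricted way: if $u\in N(v)$ had a non-neighbour pair forming a stable set of size $2$ in $C$, then $v$ together with those two vertices and another non-neighbour would give a $4K_1$ unless $u$ sees most of $C$.
\item Bull-freeness then forbids neighbours seeing exactly two consecutive vertices, or exactly $c_i,c_{i+1},c_{i+2}$ without attachments elsewhere. Combining this with $4K_1$-freeness, which in particular says $\alpha(G)\le 3$, I expect every vertex outside $C$ to be either complete to $C$, anticomplete to $C$, or a ``clone'' of some $c_i$, meaning it has the neighbourhood $\{c_{i-1},c_{i+1}\}$ plus possibly more.
\end{itemize}
From this I would show that the set of vertices complete to $C$, or the clone classes, form a homogeneous set or a clique cutset, contradicting the reductions. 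A similar but more intricate attachment analysis handles an odd antihole of length at least $7$: in $G$, which is $3K_1$-free there, a vertex outside that is mixed on it produces a bull via two antihole vertices that are non-adjacent and consecutive in the complement cycle.

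The main obstacle will be this attachment analysis, particularly for the antihole case and for vertices of $N(v)$ that are mixed on $H$ in irregular patterns. There the bull-free and $4K_1$-free constraints interact nontrivially, and one must also choose $v$ well. What I would try first is to choose $v$ of maximum degree, or $v$ minimizing $|M(v)|$, so that $M(v)$ is as small as possible. I would then argue that any obstruction in $M(v)$, together with a vertex of $N(v)$, either yields a vertex with strictly smaller non-neighbourhood or forces a homogeneous set.
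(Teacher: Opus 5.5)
Your outer frame matches the paper's: a (bull, $4K_1$)-free MNWD graph, the observation that $(\{v\}\cup M(v),N(v))$ is good whenever $G[M(v)]$ is perfect, no homogeneous sets, and a $5$-hole or an odd antihole of length at least $7$ in $M(v)$. Past that point the proposal is only a sketch, and the missing idea is \emph{local perfection}. You never use Lemma~\ref{bullfree}: a bull-free graph with no homogeneous set has, for each vertex $u$, $G[N(u)]$ or $G[M(u)]$ perfect. Your own observation forces every $G[M(u)]$ to be imperfect, so every $G[N(u)]$ is perfect (Lemma~\ref{MNWD}). This closes both of your open cases:
\begin{itemize}
\item For an odd antihole $H\subseteq M(v)$, bull-freeness alone only shows that a vertex of $N(V(H))$ with a neighbour in $M(V(H))$ is complete to $H$ (proof of Lemma~\ref{oddantihole}). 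Only local perfection turns this into a contradiction, so your ``similar but more intricate analysis'' has nothing to run on.
\item For the $5$-hole $C$, local perfection is what excludes vertices complete to $C$, a case your plan keeps.
\end{itemize}
Your predicted attachment structure is also wrong. Since $u\in N(v)$ is adjacent to $v$, no $4K_1$ contains both. In fact the truth is the reverse of ``$u$ sees most of $C$'' (Claim~\ref{cla-2}). Vertices with three consecutive or four neighbours on $C$ have no neighbour in $M(V(C))$, because of a bull. Vertices with one neighbour, or two non-adjacent neighbours, on $C$ are complete to $M(V(C))$, because of a $4K_1$. The possible attachments are exactly the classes $X_i,Y_i,Z_i,W_i$ of Claim~\ref{cla-1}. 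Your ``complete/anticomplete/clone'' list omits the one-neighbour type $X_i$, which survives to the very end of the paper's argument.

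Second, ``no clique cutset'' is not a standard reduction you can quote next to the Chudnovsky--Sivaraman homogeneous-set lemma. The paper proves it as Lemma~\ref{cut}, and the proof genuinely uses $4K_1$-freeness: one side of the cutset must be a clique, and then an odd antihole meeting both sides is ruled out. It is needed to force $Y\neq\emptyset$.

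Third, the endgame (``maximum degree'', ``minimize $|M(v)|$'') is a heuristic, not an argument. Your target is in fact reachable. In the configuration the paper eventually reaches, $V(G)=V(C)\cup X_2\cup Y_1\cup Z\cup W_3\cup\{v\}$, the graph $G[M(v_1)]$ is the disjoint union of the cliques $\{v_3,v_4\}\cup Z_3\cup Z_4$ and $X_2\cup\{v\}$. So $v_1$ has a perfect non-neighbourhood; the paper instead builds an explicit partition $(A,B)$. But getting there requires the whole chain of Claims~\ref{cla-3}--\ref{cla-10}:
\begin{itemize}
\item $M(V(C))=\{v\}$;
\item $X$ is a clique on two consecutive indices;
\item the incompatibilities between $Y_i$ and the $X_j,Y_j,W_j$;
\item the clique-blowup structure of $V(C)\cup Z$;
\item how $X_2$ attaches to $Z$.
\end{itemize}
Your proposal supplies none of these.
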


		 A class of graphs is {\em polynomially} (resp. {\em linearly}) $\chi$-{\em bounded} if it has a polynomial (resp. linear) binding function. Esperet \cite{E2017} conjectured that every $\chi$-bounded class of graphs is polynomially $\chi$-bounded. However, this conjecture was recently disproved by
		 Bria\'{n}ski {\em et al.} \cite{BDW2023}. Inspired by Esperet's conjecture,
		 Chudnovsky {\em et al.} \cite{CCDO2023} considered its analog for proper classes of graphs, and introduced the concepts of {\em Pollyanna} as follows: A class ${\cal C}$ of graphs is {\em Pollyanna} if ${\cal C}\cap {\cal F}$ is polynomially $\chi$-bounded for every $\chi$-bounded class ${\cal F}$ of graphs; moreover, they showed that the class of bull-free graphs is Pollyanna. Recently, Chen and Xu \cite{CX2024} proved that the class of (bull, $C_4$)-free graphs is linear-Pollyanna (A class ${\cal C}$ of graphs is {\em linear-Pollyanna} if
		 ${\cal C}\cap {\cal F}$ is linearly $\chi$-bounded for every $\chi$-bounded class ${\cal F}$ of graphs.).

		From all of Theorems above, it is interesting to study the perfect divisibility of bull-free graphs.
		 Note that every bull-free graph is not perfectly divisible as the class of bull-free graphs is not $\chi$-bounded by Erd\H{o}s \cite{E1959} who proved that the class of triangle-free graphs is not $\chi$-bounded. A natural question is how close the class of bull-free graphs is to being perfectly divisible. In particular, for which graph classes does the intersection with the class of bull-free graphs become perfectly divisible?
		 
		 Let ${\cal C}$ be a class of graphs. We say that ${\cal C}$ is {\em perfect-Pollyanna} if ${\cal C}\cap {\cal G}$ is perfectly divisible for any hereditary graphclass ${\cal G}$ in which each triangle-free graph  is $3$-colorable. It is certain that if ${\cal C}$ is perfectly divisible, then ${\cal C}$ is perfect-Pollyanna. All of Conjectures~\ref{con-1}-\ref{con-5} can be restated that these classes of graphs are perfect-Pollyanna. In this paper, we study the perfect divisibility of bull-free graphs and prove the following theorems.
		 
	\begin{theorem}\label{pollyanna-1}
		Let ${\cal C}$ be the class of $(\text{bull}, H)$-free graphs, where $H\in \{\text{house, hammer, diamond}\}$. Then ${\cal C}$ is perfect-Pollyanna.
	\end{theorem}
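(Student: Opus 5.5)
The plan is a minimal counterexample argument in the weighted setting. Fix $H\in\{\text{house, hammer, diamond}\}$ and a hereditary class ${\cal G}$ in which every triangle-free graph is $3$-colorable. I would prove the stronger statement that every $G\in{\cal C}\cap{\cal G}$ is perfectly weight divisible. Since ${\cal C}\cap{\cal G}$ is hereditary, it suffices to show that every $G$ in the class, with every positive weight function $w$, admits a partition $(P,W)$ with $G[P]$ perfect and $\omega_w(G[W])<\omega_w(G)$. Suppose this fails, and take $G$ to be MNWD with a bad weight $w$.

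\textbf{Step 1 (reductions).}
\begin{enumerate}
\item If $G$ is triangle-free, it is $3$-colorable since $G\in{\cal G}$. Put two colour classes in $P$ and the third in $W$. Then $G[P]$ is bipartite, hence perfect, and $W$ is stable. So every maximum-weight clique, which is an edge or, if $G$ is edgeless, a single vertex, has a vertex in $P$ and loses weight in $W$.
\item Hence we may assume $\omega(G)\ge 3$.
\item Next I would prove the standard lemmas that an MNWD graph is connected, has no clique cutset, and has no homogeneous set $X$ with $2\le |X|<|V(G)|$. For the homogeneous-set case, contract $X$ to one vertex of weight $\omega_w(G[X])$. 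Apply minimality to the smaller graph, which is still bull-free and $H$-free and lies in ${\cal G}$ because it is an induced subgraph. Then expand using a good partition of $G[X]$. Perfection of $P$ is preserved because substitution preserves perfection.
\end{enumerate}

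\textbf{Step 2 (structure).} I would exploit bull-freeness together with the absence of homogeneous sets and clique cutsets. The aim is to show that $G$ contains no triangle at all, contradicting $\omega\ge3$; failing that, that $G$ has a very restricted form, such as a clique blown up along a cycle.
\begin{enumerate}
\item Take a triangle $xyz$. For any vertex $v$, bull-freeness forces every vertex outside $N[v]$ that has a neighbour in $N(v)$ to see $N(v)$ in a restricted way.
\item For $H$ = diamond, every $G[N(v)]$ is a disjoint union of cliques. A bull-freeness argument then shows that the non-neighbours of $v$ attached to one clique component of $N(v)$ must be complete to it; otherwise a bull appears. From this I would extract a homogeneous set or a clique cutset.
\item For $H$ = house and $H$ = hammer I would run the analogous analysis. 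Take a maximal clique $K$ of size at least $3$ and classify the vertices outside $K$ by their neighbourhoods in $K$. Hammer-freeness and house-freeness then forbid a vertex with a nonneighbour in $K$ from having a further neighbour outside that is anticomplete to parts of $K$. This should again yield that $K$, or a set of vertices with the same neighbourhood on $K$, is homogeneous or separated by a clique cutset.
\end{enumerate}

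\textbf{Step 3 (conclusion).} If the structural analysis does not eliminate triangles completely, I would construct the good partition directly from the structure found in Step 2. For a suitable vertex $v$, set $P=\{v\}\cup M$, where $M$ is the set of non-neighbours of $v$. Show that $G[P]$ is perfect, meaning odd-hole-free and odd-antihole-free by Theorem~\ref{perfect}, using bull- and $H$-freeness. Also show that $N(v)$ contains no maximum-weight clique. The odd antiholes of length at least $7$ contain a house, a hammer and a diamond, so they are excluded automatically. The remaining work is to exclude odd holes in $G[P]$; here a hole through $v$ together with a neighbour of $v$ would create a bull.

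The main obstacle is Step 2, the structural analysis showing that an MNWD $(\text{bull},H)$-free graph with a triangle has a homogeneous set or clique cutset. This has to be carried out separately for each of the three choices of $H$. The hammer case looks hardest, since hammer-freeness restricts neighbourhoods much less than diamond-freeness does. For that case I would first try to show that every vertex's neighbourhood in a maximum clique is empty, the whole clique, or a single vertex, and then take the set of vertices complete to the clique as the homogeneous set.
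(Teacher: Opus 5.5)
\textbf{There is a genuine gap: Step~2 is where the theorem is actually proved, and you leave it as a programme.} Your outer framework does match the paper: a minimal counterexample in the weighted setting, the triangle-free base case via $3$-colourability (Lemma~\ref{3-coloring}), and the absence of homogeneous sets (Lemma~\ref{homogeneous}). The base case quietly needs $G$ connected, so that an isolated heavy vertex is not a maximum-weight clique; this holds for MNWD graphs. The concrete directions you sketch for Step~2 and Step~3, however, do not work:
\begin{itemize}
\item For $H=$ diamond, suppose $u\in M(v)$ is adjacent to two vertices $q_1,q_2$ of a clique component of $G[N(v)]$. Then $\{u,q_1,q_2,v\}$ induces a diamond. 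So $u$ has at most one neighbour in each component of size at least two, which is the opposite of ``complete to it''. Nothing you say leads from there to a homogeneous set.
\item In Step~3, odd antiholes on at least $7$ vertices do \emph{not} contain a hammer. The complement of a hammer is a $C_4$ with a pendant edge, and odd holes contain no $C_4$. So in the hammer case these antiholes are not excluded automatically.
\item ``An MNWD graph has no clique cutset'' is not a standard lemma. Good partitions of the two sides need not agree on the cutset. The paper proves it only for $4K_1$-free graphs (Lemma~\ref{cut}), where one side is forced to be a clique.
\item The partition $(\{v\}\cup M(v),N(v))$ is automatically good whenever $G[M(v)]$ is perfect: $v$ is isolated in $P$, and every clique in $N(v)$ extends by $v$. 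So Step~3 reduces entirely to the perfection of $G[M(v)]$, for which you give no argument.
\end{itemize}

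\textbf{The missing idea is to use that last observation in reverse, together with the Chudnovsky--Safra theorem (Lemma~\ref{bullfree}).}
\begin{itemize}
\item In an MNWD bull-free graph, $G[M(v)]$ must be \emph{imperfect} for every $v$.
\item Since there is no homogeneous set, $G[N(v)]$ is then perfect for every $v$.
\item Local perfection plus bull-freeness excludes odd antiholes on at least $7$ vertices from $G[M(v)]$ (Lemma~\ref{oddantihole}). Hence every $G[M(v)]$ contains an odd hole (Lemma~\ref{MNWD}).
\item For the house and the hammer, the paper takes a triangle $C_1$ and an odd hole $C_2$ with $V(C_2)\subseteq M(t)$ for some $t\in V(C_1)$, at minimum distance. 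Using Lemmas~\ref{L-11} and~\ref{P-1}, it shows that a component of $G[N(a)\cap N(V(C_2))]$ or of $G[(N(x)\setminus\{p_2\})\cap M(V(C_2))]$ would be a homogeneous set (Lemma~\ref{P-2}).
\item For the diamond, homogeneous sets are not used at all. By Chen--Xu (Lemma~\ref{diamond-1}), a connected (bull, diamond)-free graph with a triangle is either $K_2\square K_{\omega}$, which is perfect, or has a vertex $v$ of degree at most $\omega-1$. A good partition of $G-v$ then extends to $G$, contradicting minimality (Theorem~\ref{diamond}).
\end{itemize}
That degree argument is genuinely unweighted: it uses that a maximum clique through $v$ has $\omega$ vertices to force $N(v)$ to be a clique. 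So your plan to prove weighted divisibility in the diamond case asks for more than the paper establishes. Only the unweighted statement is needed for perfect-Pollyanna.
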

	
	Ho\`{a}ng (Theorem 3.2 in \cite{H26}) proved that a triangle-free graph $G$ is perfectly divisible if and only if $\chi(G)\leq3$. So Theorem~\ref{pollyanna-1} can be immediately obtained from the following theorem. 
		
	\begin{theorem}\label{pollyanna-2}
		Let ${\cal C}$ be the class of $(\text{bull}, H)$-free graphs, where $H\in \{\text{house, hammer, diamond}\}$, ${\cal G}$ be a hereditary class of graphs, and ${\cal H}$ be the class of triangle-free graphs. Then ${\cal G}\cap {\cal C}$ is perfectly divisible if and only if  ${\cal G}\cap {\cal H}$ is perfectly divisible.
	\end{theorem}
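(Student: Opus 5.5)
One direction is immediate. The class ${\cal G}\cap{\cal H}$ is contained in ${\cal G}\cap{\cal C}$, because a triangle-free graph contains neither a bull nor $H$: each of the house, the hammer and the diamond contains a triangle. So if ${\cal G}\cap{\cal C}$ is perfectly divisible, then so is ${\cal G}\cap{\cal H}$.

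For the converse, assume ${\cal G}\cap{\cal H}$ is perfectly divisible. By Ho\`ang's result (Theorem 3.2 in \cite{H26}), every triangle-free graph in ${\cal G}$ is then $3$-colorable. We argue by contradiction. Let $G\in{\cal G}\cap{\cal C}$ be a minimal graph that is not perfectly divisible; since ${\cal G}\cap{\cal C}$ is hereditary, $G$ is MNPD. It suffices to exhibit a good partition of $G$ itself. We may assume $\omega(G)\ge 3$, because the triangle-free case is exactly the hypothesis.

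The natural candidate for a good partition is $(A,B)$ with $A=N(v)$ for a suitable vertex $v$. Since $G$ is bull-free, the neighbourhood is the right object to control, and the goal is to show that $G[N(v)]$ is perfect for a good choice of $v$. The complementary part must then satisfy $\omega(G-N(v))<\omega(G)$, which holds if every maximum clique meets $N(v)$. To arrange this, I would instead take $A=N(v)\cup\{v\}$ minus a set of vertices, or adopt the standard Chudnovsky--Sivaraman approach: choose $A$ to be a maximal set such that $G[A]$ is perfect and $A$ meets every maximum clique. Alternatively, reduce via homogeneous sets. An MNPD graph has no clique cutset and no proper homogeneous set whose substitution preserves perfect divisibility. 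Substitution preserves perfect weight divisibility, so I would work in the weighted setting throughout and assume $G$ is MNWD, hence prime.

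The key structural step is then per case of $H$.
\begin{itemize}
\item For $H$ the diamond, each $N(v)$ is $P_3$-free, i.e.\ a disjoint union of cliques, and therefore perfect.
\item For $H$ the hammer, each $N(v)$ is $P_3$-free on the component level when combined with bull-freeness. One checks that $G[N(v)]$ has no odd hole or odd antihole, because an induced $P_4$ in $N(v)$ together with $v$ and a nonneighbour would create a bull or a hammer.
\item For $H$ the house, $N(v)$ is $P_4$-free (a $P_4$ in $N(v)$ plus $v$ forms a gem, and a gem need not be forbidden). So instead I would use that bull-free and house-free prime graphs have neighbourhoods that are $\overline{P_4}$-free or similar. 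I would verify that $N(v)$ contains no odd hole, since $v$ plus any hole of length at least $5$ in $N(v)$ contains a house, and no odd antihole of length at least $7$, since these contain a house already. Hence $N(v)$ is perfect by Theorem~\ref{perfect}.
\end{itemize}

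With $G[N(v)]$ perfect for every $v$, take a vertex $v$ in a maximum clique and set $A=\{v\}\cup$ (suitable part of $N(v)$)? This does not by itself cover all maximum cliques. So the main obstacle is covering every maximum clique by a perfect set. The plan is to show that in a prime MNWD bull-free graph with $\omega\ge3$, the nonneighbourhood $M(v)$ of a vertex $v$ in a maximum-weight clique either has smaller clique number or is triangle-free. In that case $M(v)$ is $3$-colorable by hypothesis, and one uses the weighted partition ($A=N(v)$ perfect, $B=M(v)\cup\{v\}$) together with induction on the weighted clique number. The handling of components of $M(v)$ uses Chudnovsky's bull-free decomposition: a component of $M(v)$ that attaches to $N(v)$ is either complete or anticomplete to each vertex's relevant part, or else a bull appears. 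This is where I expect the bulk of the case analysis.
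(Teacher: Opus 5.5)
\textbf{There is a genuine gap.} Your first direction is correct. So is the reduction to a minimal counterexample $G\in{\cal G}\cap{\cal C}$ with $\omega(G)\ge3$, including the switch to the weighted setting, which is legitimate because $3$-colorable triangle-free graphs are perfectly weight divisible (Lemma~\ref{3-coloring}). After that point, however, there is no argument.

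Everything rests on your dichotomy for $M(v)$, which you leave unproved. It is not an intermediate step; it is essentially the whole theorem.
\begin{itemize}
\item In the unweighted setting with $\omega\ge3$, ``$G[M(v)]$ triangle-free'' is a special case of $\omega(G[M(v)])<\omega(G)$. That inequality, together with perfection of $G[N(v)]$, already makes $(N(v),M(v)\cup\{v\})$ a good partition.
\item In the weighted setting, triangle-freeness (or $3$-colorability) of $M(v)$ does not even give $\omega_w(G[M(v)])<\omega_w(G)$, since a single heavy edge of $M(v)$ can attain $\omega_w(G)$.
\item The bull-free decomposition step that is supposed to deliver the dichotomy is only announced.
\end{itemize}

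Several side claims are also off.
\begin{itemize}
\item Local perfection needs no case analysis on $H$. In an MNWD graph, $G[M(v)]$ is imperfect (otherwise $(\{v\}\cup M(v),N(v))$ is good), so $G[N(v)]$ is perfect by Lemmas~\ref{homogeneous} and~\ref{bullfree}.
\item Your house argument is false: a hole of length at least $5$ plus a universal vertex has no induced $C_4$, hence no house.
\item Hammer-freeness does not make $N(v)$ $P_3$-free; the diamond itself is (bull, hammer)-free.
\item Your diamond observation, that $N(v)$ is a disjoint union of cliques, is true but does not by itself yield a good partition.
\end{itemize}

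The missing idea is to show outright that a (bull, $H$)-free MNWD graph (MNPD graph for the diamond) contains no triangle. A minimal counterexample then already lies in ${\cal G}\cap{\cal H}$, a contradiction.

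For the house and the hammer (Lemma~\ref{P-2}), take a triangle and a vertex $t$ of it. Then $G[M(t)]$ is imperfect and has no odd antihole on at least $7$ vertices (Lemma~\ref{oddantihole}), so it contains an odd hole $C_2$. Choose the triangle, the vertex and a shortest path $P$ to $C_2$ so that $|V(P)|$ is minimum. Two facts about traces on $C_2$ drive the argument:
\begin{itemize}
\item a vertex of $N(V(C_2))$ with a neighbour in $M(V(C_2))$ has a stable trace on $C_2$ (Lemma~\ref{L-11});
\item two adjacent such vertices with a common neighbour in $M(V(C_2))$ have equal traces when both traces have size at least two (Lemma~\ref{P-1}).
\end{itemize}
With these one first shows that no triangle $abc$ has $a\in M(V(C_2))$ and $b,c\in N(V(C_2))$. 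The component of $G[N(a)\cap N(V(C_2))]$ containing $bc$ has constant trace on $C_2$. A vertex splitting this component exists by Lemma~\ref{homogeneous}, and it creates a bull or an $H$. Next, write the triangle as $xyz$ with $x$ the end of $P$, $p_2$ the next vertex of $P$, and $y,z\in M(V(C_2))$. A vertex splitting the component of $G[(N(x)\setminus\{p_2\})\cap M(V(C_2))]$ containing $yz$ then creates a bull, an $H$, or a triangle of the excluded type.

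For the diamond (Theorem~\ref{diamond}), the Chen--Xu theorem (Lemma~\ref{diamond-1}) gives a vertex $v$ with $d(v)\le\omega(G)-1$, since $K_2\square K_{\omega}$ is perfect. Either $v$ lies in no maximum clique, or $N(v)$ is a clique. In both cases a good partition of $G-v$ extends to one of $G$.
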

	
	Let $F$ be a graph. By applying Theorem~\ref{pollyanna-2} with ${\cal G}$ to be the class of $F$-free graphs or $(F,C_4)$-free graphs, we can immediately obtain the following corollaries.
	
	\begin{corollary}\label{co1}
		Let $F$ be a graph. If $(F,C_3)$-free graph is $3$-colorable, then $(F,bull,H)$-free graph is perfectly divisible, where $H\in \{\text{house, hammer, diamond}\}$.
	\end{corollary}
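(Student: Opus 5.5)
Corollary~\ref{co1} follows directly from Theorem~\ref{pollyanna-2}, together with Ho\`{a}ng's characterization (Theorem 3.2 in \cite{H26}) that a triangle-free graph is perfectly divisible if and only if it is $3$-colorable. Let ${\cal G}$ be the class of $F$-free graphs. It is hereditary, since it is defined by a forbidden induced subgraph. Let ${\cal H}$ be the class of triangle-free graphs and ${\cal C}$ the class of $(\text{bull},H)$-free graphs. Then ${\cal G}\cap{\cal H}$ is exactly the class of $(F,C_3)$-free graphs, and ${\cal G}\cap{\cal C}$ is exactly the class of $(F,\text{bull},H)$-free graphs.

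First I show that ${\cal G}\cap{\cal H}$ is perfectly divisible. Let $G$ be $(F,C_3)$-free and let $G'$ be any induced subgraph of $G$. Then $G'$ is again $(F,C_3)$-free, so by hypothesis $\chi(G')\le 3$, and $G'$ is triangle-free. By Ho\`{a}ng's theorem, $G'$ is perfectly divisible; in particular it admits a good partition. It is worth checking that the hypothesis is used for all induced subgraphs: perfect divisibility requires a good partition for every induced subgraph, and this is supplied because the hypothesis ``every $(F,C_3)$-free graph is $3$-colorable'' applies to every member of the hereditary class ${\cal G}\cap{\cal H}$. Hence ${\cal G}\cap{\cal H}$ is perfectly divisible.

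Next I apply the ``if'' direction of Theorem~\ref{pollyanna-2}. Since ${\cal G}\cap{\cal H}$ is perfectly divisible, ${\cal G}\cap{\cal C}$ is perfectly divisible. That is, every $(F,\text{bull},H)$-free graph is perfectly divisible, for each $H\in\{\text{house, hammer, diamond}\}$.

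None of these steps is a real obstacle, since all the substance lies in Theorem~\ref{pollyanna-2}. The only points needing care are confirming that ${\cal G}$ is hereditary, as Theorem~\ref{pollyanna-2} requires, and correctly identifying the two intersection classes. One degenerate case should be noted: if $F$ is itself an induced subgraph of $C_3$, the statement still holds, because the relevant classes then become trivial (for example, edgeless graphs), and these are perfect and hence perfectly divisible.
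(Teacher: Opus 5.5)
Your proposal is correct and matches the paper's argument. The paper also takes ${\cal G}$ to be the hereditary class of $F$-free graphs, uses the hypothesis together with Ho\`{a}ng's characterization (equivalently Lemma~\ref{3-coloring}) to see that ${\cal G}\cap{\cal H}$ is perfectly divisible, and then applies Theorem~\ref{pollyanna-2}.

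Your closing remark on degenerate $F$ is unnecessary, because the argument is uniform in $F$. It is also slightly inaccurate for $F=C_3$: there the class is all triangle-free graphs, not a trivial class, and the statement holds only vacuously because the hypothesis fails (for example, the Gr\"otzsch graph is triangle-free with chromatic number $4$).
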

	
	\begin{corollary}\label{co3}
		Let $F$ be a graph. If $(F,C_4,C_3)$-free graph is $3$-colorable, then $(F,C_4,bull)$-free graph is perfectly divisible.
	\end{corollary}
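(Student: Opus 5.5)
This corollary follows from Theorem~\ref{pollyanna-2} with $H=$ house, since $C_4$ is an induced subgraph of the house. Let ${\cal G}$ be the class of $(F,C_4)$-free graphs; it is hereditary, being defined by forbidden induced subgraphs. Let ${\cal C}$ be the class of (bull, house)-free graphs. A house contains an induced $C_4$, so every $C_4$-free graph is house-free. Hence
\[
{\cal G}\cap{\cal C}=\{G: G \text{ is } (F,C_4,\text{bull})\text{-free}\},
\]
and it suffices to show that ${\cal G}\cap{\cal C}$ is perfectly divisible.

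By Theorem~\ref{pollyanna-2}, this reduces to showing that ${\cal G}\cap{\cal H}$ is perfectly divisible, where ${\cal H}$ is the class of triangle-free graphs. Now ${\cal G}\cap{\cal H}$ is exactly the class of $(F,C_4,C_3)$-free graphs, which by hypothesis are all $3$-colorable. By Ho\`ang's result (Theorem 3.2 in \cite{H26}), a triangle-free graph is perfectly divisible if and only if $\chi\le 3$. So every graph in ${\cal G}\cap{\cal H}$ is perfectly divisible.

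Perfect divisibility as defined quantifies over all induced subgraphs, but this causes no difficulty: induced subgraphs of $(F,C_4,C_3)$-free graphs are again $(F,C_4,C_3)$-free, hence $3$-colorable and triangle-free, so Ho\`ang's characterization applies to each of them. The corollary follows.

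No step here is a real obstacle. The only thing to get right is the choice $H=$ house, chosen so that $C_4$-freeness absorbs the extra forbidden subgraph; the diamond and hammer do not contain $C_4$ and would not work.
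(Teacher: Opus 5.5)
Your proposal is correct and follows the paper's own derivation. The paper obtains this corollary by applying Theorem~\ref{pollyanna-2} with ${\cal G}$ the class of $(F,C_4)$-free graphs, and your choice $H=$ house (which contains an induced $C_4$, so house-freeness is absorbed) together with Ho\`ang's criterion for triangle-free graphs is exactly what makes that application work. Your remark about induced subgraphs is harmless but unnecessary, since Ho\`ang's criterion already characterizes full perfect divisibility.
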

	
	  Goedgebeur and Schaudt \cite{G} proved that all of $(P_{11}, C_4, C_3)$-free graphs, $(P_{14},C_5,C_4,C_3)$-free graphs, and $(P_{17},C_6,C_5,C_4,C_3)$-free graphs are 3-colorable. By Theorem~\ref{pollyanna-2}, we can obtain the following corollary.
	 
	 \begin{corollary}\label{co2}
	 	The three classes of $(bull,P_{11},C_4)$-free graphs, $(bull,P_{14},C_5,C_4)$-free graphs and\\ $(bull,P_{17},C_6,C_5,C_4)$-free graphs are all perfectly divisible.
	 \end{corollary}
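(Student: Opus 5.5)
Corollary~\ref{co2} will follow by taking $F$ to be the appropriate path and applying Corollary~\ref{co3}, or equivalently Theorem~\ref{pollyanna-2} with $H=$ diamond. The only subtlety is that the three classes forbid $C_4$ but not a diamond. So I will go through Corollary~\ref{co3}, which needs only $C_4$-freeness.

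\emph{First class.} Let $F=P_{11}$. By the result of Goedgebeur and Schaudt \cite{G}, every $(P_{11},C_4,C_3)$-free graph is $3$-colorable. Corollary~\ref{co3} then gives that every $(P_{11},C_4,\text{bull})$-free graph is perfectly divisible.

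\emph{Remaining two classes.} Let $F$ be $P_{14}$ or $P_{17}$. Corollary~\ref{co3} is stated for a single forbidden graph $F$, while these classes also forbid $C_5$, and $C_5,C_6$ respectively. I will therefore not use Corollary~\ref{co3} literally. Instead I apply Theorem~\ref{pollyanna-2} directly, taking ${\cal G}$ to be the hereditary class of $(P_{14},C_5,C_4)$-free graphs, resp. $(P_{17},C_6,C_5,C_4)$-free graphs.

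\emph{The step needing care.} Theorem~\ref{pollyanna-2} is about $(\text{bull},H)$-free graphs with $H\in\{\text{house, hammer, diamond}\}$, and our classes are not assumed diamond-free. I will take $H=$ house and use that the house contains an induced $C_4$: its four non-roof vertices form a $4$-cycle. Hence every $C_4$-free graph is house-free, and each of the three classes is contained in ${\cal G}\cap{\cal C}$ with ${\cal C}$ the class of (bull, house)-free graphs.

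Now ${\cal G}\cap{\cal H}$, where ${\cal H}$ is the class of triangle-free graphs, is exactly the corresponding class from \cite{G}. Every graph in it is $3$-colorable. Since this class is hereditary, every induced subgraph is $3$-colorable as well, so by Ho\`ang's theorem (Theorem 3.2 in \cite{H26}) it is perfectly divisible. Theorem~\ref{pollyanna-2} then gives that ${\cal G}\cap{\cal C}$ is perfectly divisible. The same argument covers $P_{11}$, so all three classes are perfectly divisible.

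The only potential obstacle is checking that the house contains $C_4$, which is immediate since it is the complement of $P_5$.
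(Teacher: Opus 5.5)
Your argument is correct and follows the paper's route: the paper also obtains the corollary directly from Theorem~\ref{pollyanna-2}, with ${\cal G}$ the $(P_{11},C_4)$-, $(P_{14},C_5,C_4)$- and $(P_{17},C_6,C_5,C_4)$-free classes, using the Goedgebeur--Schaudt $3$-colorability results for their triangle-free members and the fact that $C_4$-freeness implies house-freeness. Your opening remark that one could equivalently use $H=$ diamond is wrong, since these classes need not be diamond-free, but it does not affect the proof because you switch to $H=$ house.
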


	Notice that the class of $3K_1$-free graphs has no linear binding function \cite{BGS2002,SR2019}.  This implies that the class of 
	$(\text{bull}, H)$-free graphs, where $H\in\{\text{odd~torch}, 4K_1\}$, do not admit a linear binding function. According to Theorem~\ref{oddballoon}, we can directly derive the following corollaries. 
	
	\begin{corollary}\label{chromatic}
		Let $G$ be a $(\text{bull}, H)$-free graph, where $H\in\{\text{odd~torch},4K_1\}$. Then $\chi(G)\le \binom{\omega(G)+1}{2}$.
	\end{corollary}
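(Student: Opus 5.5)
The corollary follows from Theorems~\ref{oddballoon} and \ref{4K1} together with the standard fact recalled in the introduction that every perfectly divisible graph $G$ satisfies $\chi(G)\le\binom{\omega(G)+1}{2}$. So the plan is first to note that $G$ is perfectly divisible. If $H$ is an odd torch, this is Theorem~\ref{oddballoon}; if $H=4K_1$, it is Theorem~\ref{4K1}. In both cases we may use the weaker conclusion, perfect divisibility, rather than perfect weight divisibility.

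Next I would reprove the binomial bound by induction on $\omega(G)$, to keep the argument self-contained. If $\omega(G)\le 1$, then $G$ is edgeless and $\chi(G)\le 1=\binom{2}{2}$; the empty graph is trivial. For $\omega(G)=k\ge 2$, perfect divisibility applied to $H=G$ gives a partition $(A,B)$ with $G[A]$ perfect and $\omega(G[B])\le k-1$. Then $\chi(G[A])=\omega(G[A])\le k$.

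The class of $(\text{bull},H)$-free graphs is hereditary, so $G[B]$ is again $(\text{bull},H)$-free and hence perfectly divisible. Alternatively, perfect divisibility is itself hereditary by definition, since it quantifies over all induced subgraphs. The induction hypothesis, applied with the monotonicity of $\binom{m+1}{2}$ in $m$, then gives $\chi(G[B])\le\binom{k}{2}$. Coloring $A$ and $B$ with disjoint palettes yields
\[\chi(G)\le k+\binom{k}{2}=\binom{k+1}{2}.\]

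There is no real obstacle: all the difficulty lies in Theorems~\ref{oddballoon} and \ref{4K1}, which we assume. The only point needing care is that the induction must be applied to $G[B]$, not to $G$. This requires $G[B]$ to be perfectly divisible, which holds because $B$ induces a subgraph of $G$.
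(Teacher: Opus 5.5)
Your proposal is correct and follows the paper's route: the paper obtains this corollary by combining the perfect divisibility given by Theorems~\ref{oddballoon} and \ref{4K1} with the standard induction on $\omega$ (recalled in the introduction), which gives $\chi(G)\le\binom{\omega(G)+1}{2}$ for every perfectly divisible graph. Your explicit use of Theorem~\ref{4K1} for the $4K_1$ case is slightly more careful than the paper's text, which cites only Theorem~\ref{oddballoon}.
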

	
	\begin{corollary}\label{co-2}
		Let $G$ be an $(\text{odd torch}, C_3)$-free graph. Then $\chi(G)\leq3$. 
	\end{corollary}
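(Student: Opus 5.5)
The plan is to deduce Corollary~\ref{co-2} from Theorem~\ref{oddballoon} together with Ho\`ang's characterization (Theorem 3.2 in \cite{H26}) of perfectly divisible triangle-free graphs. Let $G$ be an $(\text{odd torch}, C_3)$-free graph. The bull contains a triangle, so a $C_3$-free graph is automatically bull-free. Hence $G$ is $(\text{bull}, \text{odd torch})$-free, and Theorem~\ref{oddballoon} shows that $G$ is perfectly divisible.

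Since $G$ is triangle-free and perfectly divisible, the cited theorem of Ho\`ang gives $\chi(G)\le 3$ directly.

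For a self-contained argument that avoids \cite{H26}, I would check the bound by hand. If $G$ has no edges, the claim is trivial. Otherwise $\omega(G)=2$, and perfect divisibility applied to $H=G$ yields a good partition $(A,B)$. Here $G[A]$ is perfect, so $\chi(G[A])\le\omega(G[A])\le 2$. Also $\omega(G[B])<2$, so $B$ is a stable set and needs only one colour. Together these give $\chi(G)\le 2+1=3$. (The general bound $\chi\le\binom{\omega+1}{2}$ likewise gives $3$ when $\omega=2$.)

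There is essentially no obstacle; the only point needing care is the remark that triangle-freeness implies bull-freeness, which is what makes Theorem~\ref{oddballoon} applicable.
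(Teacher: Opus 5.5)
Your proof is correct and follows the paper's route. $C_3$-freeness implies bull-freeness, so Theorem~\ref{oddballoon} makes $G$ perfectly divisible. For a triangle-free graph this forces $\chi(G)\le 3$, either by Ho\`ang's characterization or by the $\binom{\omega+1}{2}$ bound, and your direct good-partition argument reproduces that bound correctly.
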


 As mentioned above, the structure and coloring problems of bull-free graphs have garnered significant attention, with new methods and results \cite{CW2025,CCDO2023,DC2025,HPPS,SR2019} being continuously proposed. This paper utilizes the concept of perfectly weight divisibility to further analyze the structure of bull-free graphs and bull-free MNWD graphs, thereby deriving our main results. In Section~\ref{20}, we will give some structural properties of bull-free graphs and prove Theorem~\ref{oddballoon}.  We will prove Theorems~\ref{evenhole} and \ref{pollyanna-2} in Section~\ref{3}, and prove Theorem~\ref{4K1} in Section~\ref{4}.

	\section{Notations and preliminary results}\label{notations}

For $X\subseteq V(G)$, we use $G[X]$ to denote the subgraph of $G$ induced by $X$. Let $v\in V(G)$, $X\subseteq V(G)$. We use $N_G(v)$ to denote the set of vertices adjacent to $v$. Let $d_G(v)=|N_G(v)|$, $M_G(v)=V(G)\setminus (N_G(v)\cup\{v\})$, $N_G(X)=\{u\in V(G)\setminus X\;|\; u$ has a neighbor in $X\}$, and $M_G(X)=V(G)\setminus (X\cup N_G(X))$. We simply write $N(v)$, $d(v)$, $M(v)$, $N(X)$ and $M(X)$ to denote $N_G(v)$, $d_G(v)$, $M_G(v)$, $N_G(X)$ and $M_G(X)$, respectively, whenever no confusion arises.

For two vertices $u$ and $v$, let $d(u, v)$ be the distance of $u$ and $v$, which is the length of a shortest path between $u$ and $v$. Let $S_1,S_2\subseteq V(G)$. We define $d(S_1,S_2)=\min\{d(s_1,s_2)~|~s_1\in S_1,s_2\in S_2\}$. For $A, B\subseteq V(G)$, let $N_A(B)=N(B)\cap A$ and $M_A(B)=A\setminus (N_A(B)\cup B)$.  For $u, v\in V(G)$, we simply write $u\sim v$ if $uv\in E(G)$, and write $u\not\sim v$ if $uv\not\in E(G)$. For a set $A\subset V(G)$ and a vertex $u\in V(G)\setminus A$, we say that $u$ is \emph{complete} to $A$ if $u$ is adjacent to every vertex of $A$, and that $u$ is \emph{anticomplete} to $A$ if $u$ is not adjacent to any vertex of $A$. For two disjoint subsets $A$ and $B$ of $V(G)$, $A$ is complete to $B$ if every vertex of $A$ is complete to $B$, and $A$ is anticomplete to $B$ if every vertex of $A$ is anticomplete to $B$.

Let $S\subseteq V(G)$ with $1<|S|<|V(G)|$. We say that $S$ is {\em a homogeneous set } of $G$ if every vertex in $V(G)\setminus S$ is either complete to $S$ or anticomplete to $S$. We need the following lemmas.

\begin{lemma}{ \em(Theorem 3.6 in \cite{CS19})}\label{homogeneous}
	Every MNWD graph has no homogeneous set. 
\end{lemma}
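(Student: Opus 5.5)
We argue by contradiction. Let $G$ be an MNWD graph and suppose $S$ is a homogeneous set of $G$. Since $G$ is not perfectly weight divisible but all proper induced subgraphs are, there is a positive integer weight function $w$ on $V(G)$ such that $G$ itself has no partition $(P,W)$ with $G[P]$ perfect and $\omega_w(G[W])<\omega_w(G)$. This is because any proper induced subgraph $H$ already has such a partition for every weight function, and this is inherited by restriction. The plan is to use the homogeneous set to build such a partition for $G$, which gives the contradiction.

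First we shrink $S$. Put $s=\omega_w(G[S])$ and contract $S$ to a single vertex $v_S$ of weight $s$, with $v_S$ adjacent to exactly the vertices complete to $S$. The result $G'$ is isomorphic to $G[(V(G)\setminus S)\cup\{x\}]$ for any $x\in S$. Since $|S|>1$, this is a proper induced subgraph, so $G'$ is perfectly weight divisible. Because every clique of $G$ meets $S$ in a clique of $G[S]$, maximum clique weights agree: $\omega_w(G)=\omega_{w'}(G')$, where $w'$ is $w$ off $S$ and $w'(v_S)=s$. Take a partition $(P',W')$ of $G'$ with $G'[P']$ perfect and $\omega_{w'}(G'[W'])<\omega_{w'}(G')$.

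Next we lift the partition back to $G$. If $v_S\in W'$, let $P=P'$ and $W=(W'\setminus\{v_S\})\cup S$. A clique in $G[W]$ has weight at most that of the corresponding clique in $G'[W']$ obtained by replacing its part in $S$ by $v_S$. Hence $\omega_w(G[W])<\omega_w(G)$, and $G[P]$ is perfect.

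If $v_S\in P'$, we use that $G[S]$ is a proper induced subgraph (since $|S|<|V(G)|$) and so is perfectly weight divisible. Take a partition $(P_S,W_S)$ of $S$ with $G[P_S]$ perfect and $\omega_w(G[W_S])<s$. Set $P=(P'\setminus\{v_S\})\cup P_S$ and $W=W'\cup W_S$.
\begin{itemize}
\item $G[P]$ is obtained from the perfect graph $G'[P']$ by substituting the perfect graph $G[P_S]$ for $v_S$ (or deleting $v_S$ if $P_S=\emptyset$). By Lov\'asz's substitution lemma, $G[P]$ is perfect.
\item For the weight bound, take a clique $K$ of $G[W]$. If $K\cap S=\emptyset$, then $K\subseteq W'$ and $w(K)<\omega_w(G)$. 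Otherwise $K\setminus S$ is complete to $S$, so $K'=(K\setminus S)\cup\{v_S\}$ is a clique of $G'$. Then
\[
w(K)=w(K\setminus S)+w(K\cap W_S)<w(K\setminus S)+s=w'(K')\le\omega_{w'}(G')=\omega_w(G).
\]
\end{itemize}
So $\omega_w(G[W])<\omega_w(G)$.

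In either case $G$ admits the required partition for $w$, which contradicts the choice of $w$. The main point to handle carefully is the case $v_S\in P'$. Here two facts are essential: the substitution lemma, to keep $P$ perfect, and the strict inequality coming from dividing $S$ itself, to keep the clique weight down. Degenerate cases such as $P_S$ or $W_S$ being empty fit the same argument.
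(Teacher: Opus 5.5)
Your argument is correct and is essentially the standard proof of Theorem 3.6 in \cite{CS19}, which the paper only cites without proof: collapse $S$ to one vertex $v_S$ of weight $\omega_w(G[S])$, divide the smaller graph, and lift the partition back, using Lov\'asz's substitution lemma when $v_S$ lands on the perfect side. The inequality your case analysis actually uses is $\omega_{w'}(G')\le\omega_w(G)$, while your stated justification (cliques of $G$ meet $S$ in cliques of $G[S]$) supports the opposite direction; the needed one follows by replacing $v_S$ in any clique of $G'$ with a maximum-weight clique of $G[S]$, which is complete to the rest of that clique because $S$ is homogeneous.
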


\begin{lemma}{\em (4.3 in \cite{CS2008})}\label{bullfree}
	If $G$ is a bull-free graph, then either $G$ has a homogeneous set or for every $v\in V(G)$, either $G[N(v)]$ is perfect or $G[M(v)]$ is perfect.
\end{lemma}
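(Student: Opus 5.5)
The plan is to argue by contradiction. Suppose $G$ is bull-free, has no homogeneous set, and some vertex $v$ has both $G[N(v)]$ and $G[M(v)]$ imperfect. By Theorem~\ref{perfect} there are sets $X\subseteq N(v)$ and $Y\subseteq M(v)$ such that each of $G[X]$ and $G[Y]$ is an odd hole or an odd antihole. These graphs (of order at least $5$) are connected, anticonnected, and have no homogeneous set. I would use these three properties repeatedly, together with the presence of $v$, to pin down how outside vertices can attach to $X$ and $Y$.

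\textbf{Step 1 (attachments to $X$).} I would show that every vertex $u\in M(v)$ is either complete or anticomplete to $X$. Suppose $u$ is mixed on $X$. Connectivity of $G[X]$ or of its complement gives adjacent $x,x'\in X$ with $u\sim x$ and $u\not\sim x'$, or non-adjacent $x,x'$ with the same pattern. In the adjacent case the triangle $\{v,x,x'\}$ has the pendant $u$ at $x$. Primeness of $G[X]$ then supplies a further vertex $x''\in X$ whose adjacencies to $x,x'$ and $u$ let us close a bull, using $v$ (which is complete to $X$ and anticomplete to $u$) as the extra vertex. For holes and antiholes this is a short, finite case check on the cyclic structure.

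\textbf{Step 2 (attachments to $Y$).} Symmetrically, I would show that every vertex of $N(v)$ is complete or anticomplete to $Y$, and I would analyse vertices of $M(v)$ outside $Y$ through the path $v$–$a$–$y$ with $a\in N(v)$.

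\textbf{Step 3 (building a homogeneous set).} With these attachment rules, let $Z$ be the vertex set of the component of $G[M(v)]$ containing $Y$, or of its anticomponent if $Y$ induces an antihole. I would then enlarge $Z$ step by step: while some outside vertex is mixed on $Z$, add it to $Z$, and use the rules from Steps~1 and~2 to show that this never forces us to absorb $v$. The mixed vertices are controlled by bulls formed with $v$ and a vertex of $X$. The final set $Z$ satisfies $1<|Z|<|V(G)|$ and is a homogeneous set of $G$, which is the desired contradiction.

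\textbf{Main obstacle.} I expect the hard part to be the closure argument in Step~3. One must show that a vertex $a\in N(v)$ which is mixed on a connected set containing $Y$ cannot exist without forming a bull with $v$ and the odd hole or antihole $X$. The point is that $a$ together with its neighbours and non-neighbours in $Y$ and in $X$ creates an induced $P_4$ plus a triangle. My first attempt would be the following: take a shortest path inside $Z$ from a neighbour of $a$ to a non-neighbour of $a$, then use Step~1 to choose $x\in X$ adjacent to $v$ and $a$ but not to the path, which yields an induced bull.
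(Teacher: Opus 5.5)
The paper gives no proof of this lemma; it is quoted from Chudnovsky and Safra \cite{CS2008}. Judged on its own, your proposal has a genuine gap. The attachment rules of Steps~1 and~2 are false, and no local case check can establish them.

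For Step~1, let $X=x_1x_2x_3x_4x_5x_1$ be a $5$-hole, let $v$ be complete to $X$, and let $u\not\sim v$ be adjacent to exactly $x_1,x_3,x_4,x_5$. The triangles of this $7$-vertex graph are the $vx_ix_{i+1}$ together with $ux_3x_4$, $ux_4x_5$ and $ux_5x_1$. Checking pendant vertices shows that none of them extends to a bull. For example, take the triangle $vx_1x_2$ with pendant $u$ at $x_1$: the only possible pendant at $v$ is $x_4$, which is adjacent to $u$, and nothing is pendant at $x_2$. So $u\in M(v)$ is mixed on $X$ in a bull-free graph.

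For Step~2, the rule fails even with $X$ present. Take $5$-holes $X$ and $Y$, make $v$ complete to $X\cup\{a\}$, make $a$ adjacent to exactly one vertex of $Y$, and add nothing else. The only triangles are the $vx_ix_{i+1}$, and all their pendants attach at $v$, so this graph $G^*$ is bull-free. Yet $a\in N(v)$ is mixed on $Y\subseteq M(v)$; this is the odd-torch attachment that Lemma~\ref{L-11} permits.

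The example $G^*$ also shows that Step~3 is aimed at the wrong place. Your closure starts from a component $Z$ of $G[M(v)]$, so no vertex of $M(v)\setminus Z$ is mixed on $Z$. Adding any vertex of $N(v)$ makes $v$ mixed at once. Hence ``the closure never absorbs $v$'' is equivalent to ``every vertex of $N(v)$ is complete or anticomplete to $Z$''.

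In $G^*$ this fails. The closure of $Z=Y$ adds $a$, then $v$, then all of $X$, and ends at $V(G^*)$, while a homogeneous set ($X$) sits inside $N(v)$. Your proposed remedy for the main obstacle also breaks here: it needs some $x\in X$ adjacent to $a$, and $a$ is anticomplete to $X$.

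Of course $G^*$ has a homogeneous set, but that is the point. Whatever excludes such configurations must come from a global use of the absence of homogeneous sets. That argument has to control mixed vertices on both sides of $v$ at once, and it must allow the contradiction to be found inside $N(v)$ as well as inside $M(v)$. Complementation fixes the bull and swaps $N(v)$ with $M(v)$ and holes with antiholes, so it is the natural way to organise this symmetry. None of this is supplied, so what you call the main obstacle is in fact the whole content of the lemma.
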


Recall that Ho\`ang (Theorem 3.2 in  \cite{H26}) proved that a triangle-free graph $G$ is perfectly divisible if and only if $\chi(G)\leq3$. We now prove a stronger statement.	
	
	\begin{lemma}\label{3-coloring}
	Let $G$ be a triangle-free graph. Then the following results are equivalent to each other.\\[-24pt]
	\begin{enumerate}
		\item  $\chi(G)\leq3$;
		\item $G$ is perfectly weight divisible;
		\item $G$ is perfectly divisible.
	\end{enumerate}
\end{lemma}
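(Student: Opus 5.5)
We prove the cycle of implications $1\Rightarrow 2\Rightarrow 3\Rightarrow 1$. The implication $2\Rightarrow 3$ holds for every graph, as remarked above: take the constant weight $w\equiv 1$, so that $\omega_w=\omega$. For $3\Rightarrow 1$ we can simply quote Ho\`ang (Theorem 3.2 in \cite{H26}). For completeness, the direct argument is as follows. Let $(A,B)$ be a good partition of $G$. If $\omega(G)\le 1$ there is nothing to prove. Otherwise $\omega(G)=2$, so $B$ is stable. Also $G[A]$ is perfect and triangle-free, so it is $2$-colorable. Together this gives $\chi(G)\le 3$.

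The substantive direction is $1\Rightarrow 2$. Since the class of triangle-free graphs with $\chi\le 3$ is hereditary, it suffices to show the following: every triangle-free $G$ with $\chi(G)\le 3$ and every positive integer weight $w$ admit a partition $(P,W)$ with $G[P]$ perfect and $\omega_w(G[W])<\omega_w(G)$.

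Fix a $3$-coloring with color classes $S_1,S_2,S_3$. The cliques are single vertices and edges. Call a clique $K$ \emph{heavy} if $w(K)=\omega_w(G)$. We want $W$ to contain no heavy clique, while $G[P]$ is bipartite (hence perfect).

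The first attempt is $P=S_1\cup S_2$ and $W=S_3$. Since $W$ is stable, the only cliques inside $W$ are single vertices. So this partition works unless some $v\in S_3$ has $w(v)=\omega_w(G)$. Such a heavy vertex $v$ must be isolated: positivity of $w$ gives $w(v)+w(u)>w(v)=\omega_w(G)$ for any neighbor $u$, which is impossible.

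This suggests a cleaner choice. Let $I$ be the set of isolated vertices of weight $\omega_w(G)$ and put them in $P$, i.e. take $P=S_1\cup S_2\cup I$ and $W=S_3\setminus I$. Adding isolated vertices to a bipartite graph keeps it bipartite, so $G[P]$ is perfect. Every vertex of $W$ has weight strictly less than $\omega_w(G)$, so $\omega_w(G[W])<\omega_w(G)$.

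Any induced subgraph $H$ of $G$ is again triangle-free and $3$-colorable via the restricted coloring, so the same construction applies to $H$. This proves $1\Rightarrow 2$.

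I do not expect a real obstacle here. The only subtle point is the case of a heavy isolated vertex, which the positivity of $w$ handles as shown. One also needs to check the degenerate case $\omega(H)=1$, where $H$ is edgeless and therefore perfect: there we take $P=V(H)$ and $W=\emptyset$, with the convention $\omega_w(\emptyset)=0$.
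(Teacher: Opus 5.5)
Your proof is correct and takes essentially the same route as the paper. You split a $3$-coloring into a bipartite part and a stable class, then move the isolated vertices of the stable class into the bipartite part. Positivity of $w$ ensures that every non-isolated vertex weighs strictly less than $\omega_w$. The paper moves all isolated vertices of the stable class, whereas you move only those of maximum weight, which is an immaterial difference. Your direct argument for $3\Rightarrow 1$ simply spells out the standard reasoning that the paper obtains by citing Ho\`{a}ng's theorem.
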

\begin{proof}
It is certain that (i) is equivalent to (iii), and (iii) can be obtained from (ii). It suffices to prove that (ii) can be obtained from (i). Let $w$ be a positive integer weight function on $V(G)$, and let $G'$ be an induced subgraph of $G$. Since $\chi(G')\leq3$, there is a partition $(A,B)$ of $V(G')$ such that $G'[A]$ is bipartite and $B$ is a stable set of $G'$. Let $X\subseteq B$ such that each vertex in $X$ is an isolated vertex of $G'$. Then $(A\cup X, B\setminus X)$ is a partition of $G'$ such that $G[A\cup X]$ is perfect, and $\omega_w(G[B\setminus X])<\omega_w(G')$. This proves the lemma~\ref{3-coloring}.
\end{proof}

 As standard notation, we use $\delta(G)$ to denote the minimum degree of $G$. The {\em Cartesian product} of any two graphs $G$ and $H$, denoted by $G\square H$, is the graph with vertex set $\{(a,u)~|~a\in V(G)~\mbox{and}~u\in V(H)\}$, where two vertices $(a,u)$ and $(b,v)$ are adjacent for $a,b\in V(G)$ and $u,v\in V(H)$ if either $a=b$ and $u\sim v$ in $H$, or $u=v$ and $a\sim b$ in $G$. 

Chen and Xu  \cite{CX2024} proved the following lemma.

\begin{lemma}{\em (Theorem 3.2 in \cite{CX2024})}\label{diamond-1}
	Let $G$ be a connected $(\text{bull, diamond})$-free graph. Then either $G$ is triangle-free or $\delta(G)\leq\omega(G)-1$ or $G$ is isomorphic to $K_2\square K_{\omega(G)}$. 
\end{lemma}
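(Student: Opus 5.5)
We argue by contradiction from the first two alternatives: assume $G$ is connected, contains a triangle, and satisfies $\delta(G)\ge \omega(G)$, and we show $G\cong K_2\square K_{\omega}$, where $\omega=\omega(G)\ge 3$. The basic tool is diamond-freeness. A vertex outside a maximal clique $K$ has at most one neighbour in $K$, since two neighbours $a,b\in K$ together with a common nonneighbour in $K$ would give a diamond. In the same way, two distinct cliques of size $\omega$ share at most one vertex. Fix a maximum clique $K$ and, for $a\in K$, let $A_a$ be the set of neighbours of $a$ outside $K$. Because $d(a)\ge\omega$, each $A_a$ is nonempty. By the first remark, the sets $A_a$ ($a\in K$) are pairwise disjoint and together form $N(K)$.

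\textbf{Step 1 (bull-freeness links different $A_a$'s).} Let $u\in A_a$ and $w\in A_b$ with $a\ne b$, and pick a third vertex $c\in K$, which exists since $\omega\ge3$. Then $u$ is adjacent only to $a$ in $K$ and $w$ only to $b$. If $u\not\sim w$, then $\{u,a,b,c,w\}$ induces a bull: the triangle is $abc$, with pendant edges $ua$ and $wb$. Hence $A_a$ is complete to $A_b$ for all $a\ne b$.

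\textbf{Step 2 (each $A_a$ is a single vertex).} Choose $w_b\in A_b$ for every $b\ne a$. By Step 1, for any $u\in A_a$ the set $\{u\}\cup\{w_b:b\ne a\}$ is a clique of size $\omega$. If $u,u'\in A_a$ were nonadjacent, we would get two $\omega$-cliques sharing the $\omega-1\ge2$ vertices $w_b$. Then $u,u'$ and two of the $w_b$ induce a diamond, a contradiction. So $A_a$ is a clique, and $A_a\cup\{w_b:b\ne a\}$ is a clique of size $|A_a|+\omega-1$. This forces $|A_a|=1$. Therefore $N(K)=K'$ is a clique of size $\omega$, joined to $K$ by a perfect matching.

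\textbf{Step 3 (nothing else exists).} Suppose some vertex $x\notin K\cup K'$ exists. By connectivity we may take $x$ adjacent to some $a'\in K'$, and $x$ has no neighbour in $K$. Since $K'$ is a maximum clique, $x$ has at most one neighbour in $K'$, namely $a'$. Now take $b',c'\in K'\setminus\{a'\}$, and let $b\in K$ be the matching partner of $b'$. Then $\{x,a',b',c',b\}$ induces a bull: the triangle is $a'b'c'$, with pendant edges $xa'$ and $bb'$. The required non-edges hold because $x$ misses $K\cup K'\setminus\{a'\}$, and $b$ is adjacent to no vertex of $K'$ other than $b'$. This contradiction gives $V(G)=K\cup K'$. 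Two cliques of size $\omega$ joined by a perfect matching form exactly $K_2\square K_\omega$.

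The step I expect to be most delicate is Step 2. There one must choose the auxiliary vertices $w_b$ so that the diamond really is induced, and check that $\omega\ge3$ provides two shared vertices. The remaining steps are direct bull checks once the non-adjacencies forced by diamond-freeness are recorded.
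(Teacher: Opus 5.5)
The paper gives no proof of this lemma. It is quoted from Chen and Xu (their Theorem~3.2) and used only in the proof of Theorem~\ref{diamond}, so there is no internal argument to compare yours with.

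Your proof is correct and self-contained. The step-by-step justification is as follows.
\begin{itemize}
\item \textbf{Setup.} The diamond observation (a vertex outside a maximal clique sees at most one of its vertices) correctly makes the sets $A_a$ nonempty, pairwise disjoint, and with union $N(K)$.
\item \textbf{Step~1.} This is a genuine induced bull on the triangle $abc$ with pendant edges $ua$ and $wb$. The fact $u\neq w$ comes from disjointness of the $A_a$.
\item \textbf{Step~2.} The diamond $\{u,u',w_b,w_{b'}\}$ correctly forces $A_a$ to be a clique, where $\omega\ge 3$ supplies two shared $w$'s. The bound $|A_a|+\omega-1\le\omega$ then forces $|A_a|=1$.
\item \textbf{Step~3.} This is valid because $K'$ has size $\omega$, hence is maximal, so the diamond observation applies to it. Also, $N(K)=K'$ guarantees that the connecting vertex $x$ sees some vertex of $K'$ but no vertex of $K$. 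The induced bull $\{x,a',b',c',b\}$ then has all the required non-edges.
\end{itemize}

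Your approach is an elementary local analysis around a single maximum clique, and it makes Section~2 of the paper independent of the external reference. For the paper's purposes only one consequence is needed: a triangle-containing $(\text{bull, diamond})$-free MNPD graph has a vertex of degree at most $\omega-1$, since $K_2\square K_{\omega}$ is perfect. Your argument delivers that directly.
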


\begin{theorem}\label{diamond}
	Every $($bull, diamond$)$-free  MNPD  graph is triangle-free.
\end{theorem}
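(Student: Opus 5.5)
Let $G$ be a (bull, diamond)-free MNPD graph, and suppose for contradiction that $G$ contains a triangle, so $\omega(G)\ge 3$. The plan is to apply Lemma~\ref{diamond-1} and rule out its remaining two outcomes, $\delta(G)\le\omega(G)-1$ and $G\cong K_2\square K_{\omega(G)}$.

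First I reduce to the connected case. If $G$ is disconnected, each component is a proper induced subgraph, hence perfectly divisible, and so has a good partition. Components whose clique number is less than $\omega(G)$ go entirely into $B$. For each remaining component, take its own good partition $(A_i,B_i)$. Then $\bigcup A_i$ induces a perfect graph, since a disjoint union of perfect graphs is perfect, and $\omega(G[\bigcup B_i\cup\cdots])<\omega(G)$. This gives a good partition of $G$. Every proper induced subgraph of $G$ has one by minimality, so $G$ would be perfectly divisible, a contradiction. Hence $G$ is connected, and Lemma~\ref{diamond-1} applies: either $\delta(G)\le\omega(G)-1$ or $G\cong K_2\square K_{\omega(G)}$.

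Case $G\cong K_2\square K_\omega$. This graph consists of two disjoint cliques $Q_1,Q_2$ of size $\omega$ joined by a perfect matching. Its complement is bipartite plus matching structure, and I claim $G$ itself is perfect: it is the line graph of the bipartite graph $K_{2,\omega}$, so it is perfect by König's theorem. Alternatively, one can check directly that it has no odd hole and no odd antihole. A perfect graph trivially has the good partition $(V(G),\emptyset)$, and every induced subgraph of a perfect graph is perfect. So this case contradicts the choice of $G$ as non-perfectly divisible.

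Case $\delta(G)\le\omega(G)-1$. Let $v$ satisfy $d(v)\le\omega-1$. By minimality, $G-v$ has a good partition $(A,B)$ with $G[A]$ perfect and $\omega(G[B])<\omega(G-v)\le\omega(G)$. I first handle $\omega(G-v)=\omega(G)$. In that case, add $v$ to $A$ or $B$ as follows.
\begin{itemize}
\item If $v$ is added to $B$, the bad event is that a clique $K\cup\{v\}$ of size $\omega$ appears. This forces $N(v)\cap B$ to contain a clique of size $\omega-1$, so $N(v)\subseteq B$ and $N(v)$ is a clique of size exactly $\omega-1$.
\item In that situation, put $v$ into $A$ instead. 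Since $N(v)\cap A=\emptyset$, $v$ is isolated in $G[A\cup\{v\}]$, so that graph is still perfect.
\end{itemize}
If instead $\omega(G-v)<\omega(G)$, then every maximum clique contains $v$, so $v$ lies in a clique of size $\omega$, forcing $d(v)\ge\omega-1$. Hence $N(v)$ is a clique of size $\omega-1$. Then $(\{v\},V(G)\setminus\{v\})$ is a good partition, because removing $v$ destroys every maximum clique. Every proper induced subgraph is already handled by minimality, so $G$ is perfectly divisible, a contradiction. The weighted version is not needed here.

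The step I expect to need the most care is verifying the perfectness of $K_2\square K_\omega$ and confirming the exact form of Lemma~\ref{diamond-1}, in particular that it requires connectivity. Both are routine checks.
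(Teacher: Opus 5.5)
Your proof is correct and follows the paper's route. Like the paper, you invoke Lemma~\ref{diamond-1}, discard $K_2\square K_{\omega(G)}$ as perfect, and extend a good partition of $G-v$ (for a vertex $v$ of degree at most $\omega(G)-1$) by placing $v$ on the appropriate side. Your version is slightly more careful than the paper's in two places: you check that an MNPD graph is connected, which Lemma~\ref{diamond-1} requires, and you move $v$ into $A$ only when it is isolated there, whereas the paper adds $v$ to $X$ as a simplicial vertex and leaves the preservation of perfectness implicit.
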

\begin{proof}
Suppose to its contrary. Then we have that either $\delta(G)\leq\omega(G)-1$ or $G$ is isomorphic to $K_2\square K_{\omega(G)}$. By Theorem \ref{perfect}, $K_2\square K_{\omega(G)}$ is perfect, which implies that $\delta(G)\leq\omega(G)-1$. Let $v\in V(G)$ such that $d(v)\leq\omega(G)-1$. 

Since $G$ is MNPD, it follows that there exists a partition $(X,Y)$ of $G[V(G)\setminus\{v\}]$ such that $G[X]$ is perfect and $\omega(G[Y])<\omega(G[V(G)\setminus\{v\}])$. Then $v$ belongs to a maximum clique of $G$ as otherwise $(X, Y\cup \{v\})$ is a good partition of $G$ as $\omega(G[Y\cup \{v\}])<\omega(G)$, a contradiction. So $N(v)$ is a clique as $d(v)\leq\omega(G)-1$. However, $(X\cup \{v\},Y)$ is a good partition of $G$ as $G[X\cup \{v\}]$ is perfect, a contradiction. This proves Theorem~\ref{diamond}.
\end{proof}

	\section{Structural properties of bull-free graphs}\label{20}	
	
	 We say that a graph $G$ is {\em locally perfect} if for every vertex $v\in V(G)$, $G[N(v)]$ is perfect. In this section, we first give some structural properties of bull-free graphs. Finally, we will prove Theorem~\ref{oddballoon}.
	
		\begin{lemma}\label{oddantihole}
		Let $G$ be a connected locally perfect bull-free graph, and let $v\in V(G)$. Then $G[M(v)]$ does not contain an odd antihole with at least $7$ vertices.
	\end{lemma}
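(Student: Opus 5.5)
The plan is to argue by contradiction using a shortest path from $v$ and the self-complementarity of the bull. Suppose $A=\{a_1,\dots,a_k\}\subseteq M(v)$ induces an odd antihole with $k\ge 7$. Index it so that in $G$ the vertex $a_i$ is nonadjacent exactly to $a_{i-1}$ and $a_{i+1}$ (indices mod $k$) and adjacent to all other vertices of $A$.

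\textbf{Choosing $x$ and $y$.} Since $G$ is connected, take a shortest path from $v$ to $A$. Let $x$ be its last vertex outside $A$, and let $y$ be the vertex preceding $x$; possibly $y=v$. Note that $x\ne v$, because $A\subseteq M(v)$. By minimality of the path, $y$ is anticomplete to $A$, $y\sim x$, and $x$ has a neighbor in $A$.

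\textbf{$x$ is mixed on $A$.} Because $G$ is locally perfect, $G[N(x)]$ is perfect. So $N(x)$ cannot contain $A$, since an odd antihole is not perfect by Theorem~\ref{perfect}. Hence $x$ has both neighbors and nonneighbors in $A$. Going around the cyclic order, there is an index $i$ with $x\sim a_i$ and $x\not\sim a_{i+1}$; the case $x\sim a_{i+1}$, $x\not\sim a_i$ is symmetric.

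\textbf{Key bull claim.} Suppose $x\sim a_i$, $x\not\sim a_{i+1}$, and $a_j$ is a common neighbor of $a_i$ and $a_{i+1}$, that is, $j\notin\{i-1,i,i+1,i+2\}$. Then $x\not\sim a_j$. Indeed, if $x\sim a_j$, then $\{x,a_i,a_j\}$ is a triangle, $y$ is a pendant at $x$, and $a_{i+1}$ is a pendant at $a_j$. The required nonadjacencies all hold: $y$ is anticomplete to $A$, $a_{i+1}\not\sim a_i$, and $a_{i+1}\not\sim x$. This gives a bull, a contradiction. Consequently, whenever $x$ is adjacent to $a_i$ but not to $a_{i+1}$, we get $N(x)\cap A\subseteq\{a_{i-1},a_i,a_{i+2}\}$. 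Symmetrically, if $x\sim a_{i+1}$ and $x\not\sim a_i$, then $N(x)\cap A\subseteq\{a_{i-1},a_{i+1},a_{i+2}\}$.

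\textbf{Finishing by case analysis.} Starting from $x\sim a_i$, $x\not\sim a_{i+1}$, we have $N(x)\cap A\subseteq\{a_{i-1},a_i,a_{i+2}\}$, and we split on which of $a_{i-1},a_{i+2}$ are neighbors of $x$.
\begin{itemize}
\item If $x\sim a_{i+2}$: since $x\not\sim a_{i+3}$, the key claim applies to the pair $(a_{i+2},a_{i+3})$. Because $k\ge7$, $a_i$ is a common neighbor of $a_{i+2}$ and $a_{i+3}$. Hence $x\not\sim a_i$, a contradiction.
\item If $x\sim a_{i-1}$: the same argument applies to the pair $(a_{i-1},a_{i-2})$, where $x\sim a_{i-1}$ and $x\not\sim a_{i-2}$. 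Again $a_i$ is a common neighbor of $a_{i-1}$ and $a_{i-2}$, so $x\not\sim a_i$, a contradiction.
\item If $N(x)\cap A=\{a_i\}$: the claim applies to the pair $(a_{i+1},a_{i+2})$ from the other side. Instead, take a triangle $\{x,a_i,a_j\}$ is unavailable, so use the induced path $y\,x\,a_i\,a_j\,a_{j+1}$ with $a_j,a_{j+1}$ both adjacent to $a_i$. For some $a_l$ adjacent to $a_i,a_j$ but not $a_{j+1}$, this yields a bull $\{a_i,a_j,a_l\}$ with pendants $x$ at $a_i$ and $a_{j+1}$ at $a_l$. Here $a_l$ is chosen as $a_{j+2}$, and its adjacencies are checked using $k\ge7$.
\end{itemize}

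The main obstacle is this last configuration, where $x$ has a single neighbor in $A$. The pendant edges must be found inside $A$ rather than via $y$, and the index bookkeeping mod $k$ must confirm that $k\ge7$ supplies suitably placed vertices $a_j, a_{j+1}, a_{j+2}$, all nonadjacent to $x$, with the required adjacencies.
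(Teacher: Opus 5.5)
Your setup, the key bull claim, and Case 1 are correct. Together they give $N(x)\cap A\subseteq\{a_{i-1},a_i\}$, which is a slicker substitute for the paper's statement (2) that $N(x)\cap A$ is stable. The paper proves (2) differently: it shows that a vertex with two adjacent neighbours on the antihole and a neighbour in $M(V(H))$ would be complete to the antihole.

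The genuine gap is Case 2. The vertex $a_i$ is \emph{not} a common neighbour of $a_{i-1}$ and $a_{i-2}$, because $a_i\not\sim a_{i-1}$ (they are consecutive in the complementary hole). Applying your key claim to $(a_{i-1},a_{i-2})$ only gives $N(x)\cap A\subseteq\{a_{i-3},a_{i-1},a_i\}$. With Case 1 this leaves the configuration $N(x)\cap A=\{a_{i-1},a_i\}$, which is a stable pair. There $x$ lies in no triangle with two vertices of $A$, so no bull with $y$ as a pendant at $x$ exists, and the key claim cannot rule this case out at all. It is exactly the case the paper disposes of in its final step, with a bull inside $A\cup\{x\}$.

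Case 3, as written, also does not produce a bull:
\begin{itemize}
\item $a_j\not\sim a_{j+1}$, so $y\,x\,a_i\,a_j\,a_{j+1}$ is not a path.
\item Requiring $a_{j+1}\sim a_i$ prevents $a_{j+1}$ from being a pendant of a triangle that contains $a_i$.
\item You ask for $a_l\not\sim a_{j+1}$ while making $a_{j+1}$ a pendant at $a_l$, which needs $a_l\sim a_{j+1}$. In any case $a_{j+2}\not\sim a_{j+1}$.
\end{itemize}

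Both residual cases are closed by one bull that uses $x$ but not $y$. Take the triangle $a_i a_{i+2} a_{i+4}$, with pendant $x$ at $a_i$ and pendant $a_{i+1}$ at $a_{i+4}$. This needs only $x\sim a_i$ and $x\not\sim a_{i+1},a_{i+2},a_{i+4}$. These hold once $N(x)\cap A\subseteq\{a_{i-1},a_i\}$ and $k\ge 7$, since then $a_{i+4}\neq a_{i-1}$ and all the required antihole adjacencies hold. The paper's statement (1) uses the same bull for the single-neighbour case. With this replacement for Cases 2 and 3, your argument is complete.
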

	\begin{proof}
	Suppose to the contrary that $H$ is an odd antihole of $G[M(v)]$ with $V(H)=\{v_1,v_2,...,v_k\}$, where $k\ge 7$ and $\overline{H}=v_1v_2\cdots v_kv_1$ is an odd hole. We first prove that
	\begin{equation}\label{c-1}
		\mbox{if $y\in N(V(H))$, then $|N(y)\cap V(H)|\geq2$.}
	\end{equation}
	
	On the contrary, without loss of generality, let $N(y)\cap V(H)=\{v_1\}$. However, $\{v_1,v_3,v_5,y,v_2\}$ induces a bull, a contradiction. This proves (\ref{c-1}). 
	
	We next prove that 
	\begin{equation}\label{c-2}
		\mbox{if $x\in M(V(H))$ and $y\in N(V(H))$ such that $x\sim y$, then $N(y)\cap V(H)$ is a stable set.}
	\end{equation}
	
	On the contrary, without loss of generality, assume that $v_1,v_n\in N(y)\cap V(H)$ with $v_1v_n\in E(G)$, where $3\leq n\leq k-1$. We next show that $y$ is complete to $\{v_1,v_2,\cdots,v_n\}$. For the sake of contradiction, assume that $y\not\sim v_{n'}$ such that $n'$ is the minimum integer, where $2\leq n'\leq n-1$. Suppose that $n=3$. Then $n'=2$. To avoid an induced bull on $\{v_1,v_3,y,x,v_4\}$, we must have $y\sim v_{4}$; however, $\{v_1,v_4,y,x,v_2\}$ induces a bull, a contradiction. Hence, $n\geq4$, and so $v_n\sim v_2$ and $v_{n-1}\sim v_{1}$. 
	
	We can deduce that $n'\ne 2$ to avoid an induced bull on $\{v_1,v_n,y,x,v_2\}$; and $n'\ne n-1$ to avoid an induced bull on $\{v_1,v_n,y,x,v_{n-1}\}$. Therefore, $3\leq n'\leq n-2$, and thus $v_{n'}\sim v_n$; moreover $y\sim v_{n'-1}$ by the minimality of $n'$. However, $\{v_{n'-1},v_n,y,x,v_{n'}\}$ induces a bull, a contradiction. So, $y$ is complete to $\{v_1,v_2,\cdots,v_n\}$. Using similar arguments, we can deduce that $y$ is complete to $\{v_1,v_k,v_{k-1},\cdots,v_n\}$, and this implies that $y$ is complete to $V(H)$, which contradicts that $G$ is locally perfect. This proves (\ref{c-2}).

	Since $G$ is connected and $V(H)\subseteq M(v)$, there exist two adjacent vertices $x\in M(V(H))$ and $y\in N(V(H))$. Using (\ref{c-1}) and (\ref{c-2}), we may assume by symmetry that $N(y)\cap V(H)=\{v_1,v_2\}$. However, $\{v_{k-3},v_{k-1},v_k,v_1,y\}$ induces a bull, a contradiction. This completes the proof of Lemma~\ref{oddantihole}.
	\end{proof}
	\begin{lemma}\label{MNWD}
		Let $G$ be a bull-free MNWD graph. Then $G$ is a connected locally perfect graph, and for every vertex $v\in V(G)$, $G[M(v)]$ is imperfect and contains no odd antihole with at least $7$ vertices.
	\end{lemma}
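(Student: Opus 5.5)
Since $G$ is MNWD, Lemma~\ref{homogeneous} tells us that $G$ has no homogeneous set. Lemma~\ref{bullfree} then gives, for every $v\in V(G)$, that $G[N(v)]$ or $G[M(v)]$ is perfect. The plan is to establish the claims of Lemma~\ref{MNWD} in the order connectivity, local perfection, imperfection of $G[M(v)]$, and absence of long odd antiholes, with the last following from Lemma~\ref{oddantihole}.

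\emph{Connectivity.} Suppose $G$ is disconnected. If some component $C$ has at least two vertices, then $V(C)$ is a homogeneous set (every vertex outside $C$ is anticomplete to it), contradicting Lemma~\ref{homogeneous}, provided $|V(C)|<|V(G)|$, which holds. Otherwise all components are singletons, so $G$ is edgeless and hence perfect, which is impossible for an MNWD graph. Alternatively, we can argue directly: combining good partitions of the components (for the component attaining $\omega_w$, and trivially for the others) yields one for $G$. So $G$ is connected.

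\emph{$G[M(v)]$ is imperfect for every $v$.} This is the key step. Suppose $G[M(v)]$ is perfect for some $v$, and fix a positive weight $w$. The induced subgraphs $H\ne G$ are handled by minimality, so it suffices to find a good partition of $G$ itself. Take $P=M(v)\cup\{v\}$. Then $G[P]$ is perfect, since $v$ is isolated in it. Take $W=N(v)$. Every clique $K$ in $N(v)$ extends to the clique $K\cup\{v\}$, so $\omega_w(G[W])\le\omega_w(G)-w(v)<\omega_w(G)$ because $w(v)\ge1$. Thus $(P,W)$ is a good partition, a contradiction. So $G[M(v)]$ is imperfect for every $v$.

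\emph{Local perfection.} Applying Lemma~\ref{bullfree} to each $v$, the alternative ``$G[M(v)]$ perfect'' is now excluded, so $G[N(v)]$ is perfect for every $v$, and $G$ is locally perfect.

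\emph{Odd antiholes.} Now $G$ is a connected, locally perfect, bull-free graph, so Lemma~\ref{oddantihole} applies directly and shows that $G[M(v)]$ contains no odd antihole on at least $7$ vertices.

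The only genuinely new step is the imperfection argument. Its one subtlety is the ordering: imperfection must be proved before local perfection so that Lemma~\ref{bullfree} yields local perfection, and local perfection and connectivity are in turn exactly the hypotheses Lemma~\ref{oddantihole} needs.
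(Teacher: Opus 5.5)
Your proof is correct and follows the paper's argument essentially step for step. You use the partition $(\{v\}\cup M(v),N(v))$ to show $G[M(v)]$ is imperfect, then Lemma~\ref{homogeneous} together with Lemma~\ref{bullfree} to get local perfection, and finally Lemma~\ref{oddantihole}, in the same order as the paper. The only difference is that you justify connectivity explicitly (a component is a homogeneous set, or good partitions of the components can be glued), which the paper simply asserts.
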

	\begin{proof}
	It is certain that $G$ is connected. Since $G$ is MNWD, we have that for every vertex $v\in V(G)$, $G[M(v)]$ cannot be perfect as otherwise, $(\{v\}\cup M(v), N(v))$ is a partition of $V(G)$ such that for all positive integer weight function on $V(G)$, the $G[\{v\}\cup M(v)]$ is a perfect graph and $G[N(v)]$ has a smaller maximum weight of a clique than $G$, a contradiction to the minimality of $G$. Therefore, for every $v\in V(G)$, $G[M(v)]$ is imperfect. Moreover, since $G$ is MNWD, $G$ has no homogeneous set by Lemma~\ref{homogeneous}. Hence, for every $v\in V(G)$, $G[N(v)]$ is perfect by Lemma~\ref{bullfree}. Now, Lemma~\ref{MNWD} holds by Lemma~\ref{oddantihole}.
	\end{proof}

	\begin{lemma}\label{L-11}
		Let $G$ be a locally perfect bull-free graph, $C=v_1v_2\cdots v_kv_1$ be an odd hole in $G$, where $k\ge5$ is an odd integer. Let  $uv$ be an edge of $G$. If $u\in N(V(C))$ and $v\in M(V(C))$, then $N_C(u)$ is stable.
	\end{lemma}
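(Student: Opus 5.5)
We argue by contradiction, following the pattern of claim (\ref{c-2}) in the proof of Lemma~\ref{oddantihole}. Suppose $N_C(u)$ is not stable. Then $u$ has two neighbours on $C$ that are consecutive; after relabelling, $u\sim v_1$ and $u\sim v_2$. The goal is to show that $u$ is complete to $V(C)$. That would make $C$, an odd hole, an induced subgraph of $G[N(u)]$, contradicting local perfectness (Theorem~\ref{perfect}).

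The tool is one bull pattern. Suppose $v_i,v_{i+1}$ are consecutive on $C$ with $u\sim v_i$, $u\sim v_{i+1}$ and $u\not\sim v_{i+2}$. Look at $\{v,u,v_i,v_{i+1},v_{i+2}\}$:
\begin{itemize}
\item $uv_iv_{i+1}$ is a triangle;
\item $v$ is a pendant at $u$, since $v\in M(V(C))$ is anticomplete to $C$;
\item $v_{i+2}$ is a pendant at $v_{i+1}$, since $v_{i+2}\not\sim v_i$ (because $k\ge5$), $v_{i+2}\not\sim u$, and $v_{i+2}\not\sim v$.
\end{itemize}
The two pendant vertices $v$ and $v_{i+2}$ are nonadjacent. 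So this set induces a bull, a contradiction.

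Hence whenever $u$ sees an edge $v_iv_{i+1}$ of $C$, it also sees $v_{i+2}$, and then it sees the edge $v_{i+1}v_{i+2}$. Starting from the edge $v_1v_2$ and inducting around the cycle, $u$ is adjacent to $v_3,v_4,\dots,v_k$. Thus $u$ is complete to $V(C)$, giving the contradiction above. The symmetric argument in the other direction is not needed, since one direction already goes all the way around.

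The only step needing care is checking that the five vertices induce exactly a bull. This uses $k\ge5$, so that $v_i$ and $v_{i+2}$ are nonadjacent on the hole. It also uses $v\in M(V(C))$, so that $v$ has no neighbour on $C$.
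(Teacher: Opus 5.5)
Your proof is correct and is essentially the paper's argument. The paper assumes $u\sim v_1,v_2$, uses local perfectness to take the minimum $n\in\{3,\dots,k\}$ with $u\not\sim v_n$, and exhibits the same bull $\{u,v_{n-1},v_{n-2},v_n,v\}$, which is your induction around the cycle phrased as a minimal counterexample (and you do not even need the Strong Perfect Graph Theorem at the end, since an odd hole is itself imperfect).
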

	\begin{proof}
	On the contrary, without loss of generality, we may assume that $u$ is complete to $\{v_1,v_2\}$. Since $G$ is locally perfect, $u$ is not complete to $V(C)$. Let $n\in \{3,4,\cdots,k\}$ be the minimum integer such that $u\not\sim v_n$. Then $\{u,v_{n-1},v_{n-2},v_n,v\}$ induces a bull, a contradiction. 	\end{proof}

\begin{lemma}\label{P-1}
	Let $G$ be a locally perfect bull-free graph, $C=v_1v_2\cdots v_kv_1$ be an odd hole in $G$, where $k\ge5$ is an odd integer. Let $x,y,z\in V(G)$ such that $x\sim y$, $\{x,y\}\subseteq N(z)\cap N(V(C))$ and $z\in M(V(C))$. If both $x$ and $y$ have at least two neighbors in $V(C)$, then $N_C(x)=N_C(y)$.
\end{lemma}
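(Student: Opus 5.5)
The plan is to first reduce to the case where the two neighbourhoods on $C$ are disjoint, and then to find a bull in every remaining configuration. All the bulls I look for are built from the triangles $\{x,y,z\}$ or $\{x,y,d\}$ with $d$ a common neighbour, together with pendant vertices taken from $C$.

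Step 1: stability. Since $z\in M(V(C))$ and $xz,yz\in E(G)$, Lemma~\ref{L-11} applied to the edges $xz$ and $yz$ shows that both $N_C(x)$ and $N_C(y)$ are stable sets. In particular, a vertex of $C$ adjacent to $x$ has no $C$-neighbour in $N_C(x)$, and similarly for $y$.

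Step 2: the neighbourhoods cannot overlap without being equal. Suppose $N_C(x)\neq N_C(y)$. By symmetry there is $c\in N_C(x)\setminus N_C(y)$. Assume also that some $d\in N_C(x)\cap N_C(y)$ exists.
\begin{itemize}
\item The vertices $c$ and $d$ are distinct and both lie in $N_C(x)$, so $c\not\sim d$ by stability.
\item Let $d'$ be a $C$-neighbour of $d$ with $d'\not\sim c$. Since $k\ge 5$ and $c\neq d$, the vertex $c$ is adjacent to at most one of the two $C$-neighbours of $d$, so such a $d'$ exists. Moreover $d'\neq c$, since $c\not\sim d$.
\item By stability of $N_C(x)$ and $N_C(y)$, and since $d$ lies in both, we have $d'\not\sim x$ and $d'\not\sim y$.
\end{itemize}
Then $\{x,y,d\}$ is a triangle, $c$ is a pendant at $x$ (as $c\not\sim y,d$), and $d'$ is a pendant at $d$ (as $d'\not\sim x,y$ and $c\not\sim d'$). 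So $\{c,x,y,d,d'\}$ induces a bull, a contradiction. Hence if $N_C(x)\ne N_C(y)$, we may assume $N_C(x)\cap N_C(y)=\emptyset$.

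Step 3: the disjoint case, using the triangle $\{x,y,z\}$. Now the vertex $z$ is anticomplete to $C$.
\begin{itemize}
\item If some $c\in N_C(x)$ and $d\in N_C(y)$ satisfy $c\not\sim d$, then $c\not\sim y,z$ and $d\not\sim x,z$ by disjointness and the choice of $z$. So $\{c,x,y,z,d\}$ is a bull with triangle $x,y,z$ and pendants $c$ at $x$ and $d$ at $y$, a contradiction.
\item Otherwise $N_C(x)$ is complete to $N_C(y)$. By hypothesis $x$ has two neighbours $c_1,c_2\in V(C)$, and $y$ has two neighbours $d_1,d_2\in V(C)$. Then $d_1$ and $d_2$ are two distinct common neighbours of $c_1,c_2$ on the hole $C$.
\item On a hole of length $k\ge5$, two distinct vertices have at most one common neighbour, which is a contradiction.
\end{itemize}
Hence $N_C(x)=N_C(y)$.

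The main obstacle is Step 2: choosing the triangle $\{x,y,d\}$ and checking that a suitable pendant $d'$ always exists. This relies on $k\ge5$ and on the stability from Lemma~\ref{L-11}. The remaining checks are routine adjacency verifications.
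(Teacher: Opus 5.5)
Your proof is correct. It rests on the same two bull configurations as the paper's proof, but you organize the case analysis differently.

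\textbf{The paper's route.} The paper argues in coordinates. It fixes $v_1\in N_C(x)\setminus N_C(y)$ and first shows that $y$ has a neighbour in $\{v_2,v_k\}$. It then takes a minimal-index neighbour $v_m$ of $y$ among $v_4,\ldots,v_{k-1}$, forces $x\sim v_m$ using the bull on $\{x,y,z,v_1,v_m\}$, and rules this out with the bull on $\{x,y,v_m,v_1,v_{m-1}\}$. Doing this for both $x$ and $y$ pins down $N_C(x)=\{v_1,v_3\}$ and $N_C(y)=\{v_2,v_k\}$, and the bull on $\{x,y,z,v_3,v_k\}$ ends the proof.

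\textbf{Your route.} Your Step~2 is the coordinate-free form of the second configuration: a common neighbour $d$ together with a private neighbour $c$ always gives the bull on $\{c,x,y,d,d'\}$. This immediately reduces to disjoint neighbourhoods. Your Step~3 uses the first configuration to force $N_C(x)$ complete to $N_C(y)$. It then replaces all of the paper's index chasing with one fact: two distinct vertices of a hole of length at least $5$ have at most one common neighbour.

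\textbf{Comparison.} Your version is shorter and needs no minimal-index choices or reflection symmetries. It also makes clear that the hypothesis $|N_C(x)|,|N_C(y)|\ge 2$ is used only in the final complete case. The paper's version exhibits the unique configuration that would have to occur, but nothing later in the paper relies on that.
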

\begin{proof}
Suppose to the contrary that $N_C(x)\ne N_C(y)$. Without loss of generality, assume that $x\sim v_1$ and $y\not\sim v_1$. We next show that 
\begin{equation}\label{eq-1}
	\mbox{$y$ has a neighbor in $\{v_2,v_n\}$.}
\end{equation} 

Suppose to the contrary that $y$ is anticomplete to $\{v_2,v_n\}$. Since $y$ has a neighbor in $V(C)$, let $n\in \{3,4,\cdots,k-1\}$ be the minimum integer such that $y\sim v_n$. By Lemma~\ref{L-11}, $y$ is anticomplete to $\{v_{n-1},v_{n+1}\}$. To avoid an induced bull on $\{x,y,z,v_1,v_n\}$, we have that $x\sim v_n$. Therefore, $x$ is anticomplete to $\{v_{n-1},v_{n+1}\}$ by Lemma~\ref{L-11} again. If $n=3$, then $\{x,y,v_3,v_1,v_4\}$  induces a bull, a contradiction. If $n\geq4$, then $\{x,y,v_1,v_{n-1},v_n\}$  induces a bull, a contradiction. This proves (\ref{eq-1}). By symmetry, we may assume that $y\sim v_2$. By Lemma~\ref{L-11}, $x$ is anticomplete to $\{v_2,v_k\}$ and $y\not\sim v_3$.

Suppose $y$ is not anticomplete to $\{v_4,v_5,\cdots,v_{k-1}\}$. Let $m\in \{4,\cdots,k-1\}$ be the minimum integer such that $y\sim v_m$. To avoid an induced bull on $\{x,y,z,v_1,v_m\}$, we have that $x\sim v_m$. By Lemma~\ref{L-11}, $x\not\sim v_{m-1}$. However, $\{x,y,v_m,v_1,v_{m-1}\}$ induces a bull, a contradiction. Hence, $y$ is anticomplete to $\{v_4,v_5,\cdots,v_{k-1}\}$. Similarly, $x$ is anticomplete to $\{v_4,v_5,\cdots,v_{k-1}\}$. Since both $x$ and $y$ have at least two neighbors in $V(C)$, $N_C(x)=\{v_1,v_3\}$ and $N_C(y)=\{v_2,v_k\}$. However, $\{x,y,z,v_3,v_k\}$ induces a bull. This completes the proof of Lemma~\ref{P-1}.
\end{proof}

We are now in a position to present the proof of Theorem~\ref{oddballoon}.

\noindent\textbf{{\em Proof of Theorem~\ref{oddballoon}:}} For the sake of contradiction, let $G$ be an (bull, odd torch)-free MNWD graph. By Lemma~\ref{MNWD}, we have that $G$ is connected locally perfect graph, and for every vertex $v\in V(G)$, $G[M(v)]$ is imperfect and contains no odd antihole with at least $7$ vertices. Therefore, by Theorem~\ref{perfect}, $G[M(v)]$ contains an odd hole, say $C=v_1v_2\cdots v_kv_1$, where $k\geq5$ is an odd integer. Since $G$ is connected and $V(C)\subseteq M(v)$, there exist two adjacent vertices $x\in M(V(C))$ and $y\in N(V(C))$. By Lemma~\ref{L-11}, we have that $N(y)\cap V(C)$ is stable, which contradicts that $G$ is odd torch-free. This completes the proof of Theorem~\ref{oddballoon}. \qed

\section{Proof of Theorems~\ref{evenhole} and~\ref{pollyanna-2}}\label{3}

In this section, we present the proofs of Theorems~\ref{evenhole} and \ref{pollyanna-2} together, since both can be derived by combining the results from the preceding text with the three lemmas below. 

\begin{lemma}{\em (1.1 in \cite{CS2023})}\label{P-000}
Each even hole-free graph $G$ satisfies $\chi(G)\le 2\omega(G)-1$, and so $G$ is $3$-colorable if $\omega(G)\le2$. 
\end{lemma}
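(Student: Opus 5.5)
The plan is to deduce the bound from a degeneracy argument. The key structural input is the theorem of Chudnovsky and Seymour (the content of \cite{CS2023}): every nonnull even hole-free graph has a \emph{bisimplicial} vertex, that is, a vertex $v$ such that $N(v)$ is the union of two cliques. I will not reprove this; it is the genuinely deep step. Its proof (finding such a vertex via the decomposition theory of even hole-free graphs, and in particular handling the case where no clique cutset or star-type decomposition directly yields a bisimplicial vertex) is the main obstacle, and I would simply cite it.

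Granting this, if $v$ is bisimplicial in an even hole-free graph $H$, then $N(v)=K_1\cup K_2$ with $K_1,K_2$ cliques. Each $K_i\cup\{v\}$ is a clique, so $|K_i|\le\omega(H)-1$. Hence
\[
d_H(v)\le 2\omega(H)-2.
\]
Since the class of even hole-free graphs is hereditary, every induced subgraph $H$ of $G$ is even hole-free and satisfies $\omega(H)\le\omega(G)$. So every induced subgraph of $G$ has a vertex of degree at most $2\omega(G)-2$, i.e. $G$ is $(2\omega(G)-2)$-degenerate.

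The coloring then follows by induction on $|V(G)|$. Pick such a vertex $v$ and color $G-v$ with $2\omega(G)-1$ colors by induction; this uses $\omega(G-v)\le\omega(G)$. Since $v$ has at most $2\omega(G)-2$ neighbors, at least one color remains free for $v$. This gives $\chi(G)\le 2\omega(G)-1$.

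For the second assertion, suppose $\omega(G)\le 2$. If $\omega(G)=2$, the bound gives $\chi(G)\le 3$ directly. If $\omega(G)\le1$, then $G$ is edgeless and $\chi(G)\le1$. In either case $G$ is $3$-colorable.
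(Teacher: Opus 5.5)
Your proposal is correct and takes essentially the paper's approach: the paper gives no proof of its own and simply cites Chudnovsky and Seymour, whose main theorem is that every nonnull even hole-free graph has a bisimplicial vertex. The bound $\chi(G)\le 2\omega(G)-1$, and with it $3$-colorability when $\omega(G)\le 2$, is the standard degeneracy and greedy-coloring consequence that you derive correctly.
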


\begin{lemma}\label{P-2}
	Every $(bull, H)$-free MNWD graph is triangle-free, where $H\in \{\text{house, hammer}\}$.
\end{lemma}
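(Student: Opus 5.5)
We argue by contradiction. Let $G$ be a $(\text{bull},H)$-free MNWD graph with $H\in\{\text{house, hammer}\}$, and suppose $G$ contains a triangle. By Lemma~\ref{MNWD}, $G$ is connected and locally perfect. Moreover, for every $v$, $G[M(v)]$ is imperfect and contains no odd antihole on at least $7$ vertices. So by Theorem~\ref{perfect} every $M(v)$ contains an odd hole. The plan is to play an odd hole against a triangle, using Lemmas~\ref{L-11} and~\ref{P-1} to control the attachments to the hole, and to extract an induced $H$ or bull.

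\textbf{Setup.} Among all pairs $(T,C)$ with $T$ a triangle and $C=v_1\cdots v_kv_1$ an odd hole, choose one minimizing $d(V(T),V(C))$. Such pairs exist: pick $v$ in some triangle $T_0$; then $M(v)$ contains an odd hole.

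\textbf{Triangle far from the hole.} Suppose first $d(V(T),V(C))\ge 2$, and take a shortest path $t p_1 p_2\cdots$ from $T$ towards $C$. By the minimality of the distance, $p_1$ is adjacent to exactly one vertex $t$ of $T$; otherwise $p_1$ together with an edge of $T$ gives a triangle closer to $C$. Likewise $p_2$ is anticomplete to $T$. Then $T\cup\{p_1,p_2\}$ is an induced hammer. In the house case I would instead use the bull: $T\cup\{p_1\}$ plus a vertex of $T$ chosen with a private non-neighbour forces a bull, so this case should also collapse quickly. Once the distance is at most $1$, Lemma~\ref{L-11} applies to an edge $uv$ with $u\in N(V(C))$ and $v\in M(V(C))$ lying on the path. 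It shows $N_C(u)$ is stable, and then $u$, two of its hole-neighbours $v_i,v_j$ and the path to $T$ give the forbidden configuration.

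\textbf{Triangle close to the hole.} Now suppose $T$ meets or touches $C$. If a vertex $y\notin V(C)$ has two adjacent neighbours $v_i,v_{i+1}$ on $C$, then $\{y,v_i,v_{i+1}\}$ is a triangle sitting on $C$. Since $G$ is locally perfect, $y$ is not complete to $C$, so there is a first non-neighbour $v_n$ along $C$ after a run of neighbours. I then expect an induced hammer, namely $\{y,v_{n-2},v_{n-1}\}$ with the path $v_{n-1}v_nv_{n+1}$ (checking adjacencies with $y$ and using bull-freeness to eliminate the alternatives, as in Lemma~\ref{L-11}). I also expect an induced house, namely $y,v_{n-2},v_{n-1},v_n$ together with a further neighbour of $y$, or the $C_4$ formed by $y$ and $v_{i-1},v_i,v_{i+1}$ when $y\sim v_{i-1},v_{i+1}$. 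If instead every vertex has a stable neighbourhood on $C$, then $T$ must have a vertex $t$ with a neighbour on $C$ and two other vertices $a,b$. Depending on the adjacency of $a$ and $b$ to $C$, the set $\{a,b,t,v_i,v_{i\pm1}\}$ yields a bull or a hammer, and a house appears when $a$ and $b$ attach to $C$ at distance-two vertices.

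\textbf{Main obstacle.} I expect the main difficulty to be this last case analysis: triangles whose vertices all attach to $C$ in stable sets, separately for the house and the hammer. There I would lean on Lemma~\ref{P-1}, with $z$ a third vertex outside $N(V(C))$ if available, to force $N_C(a)=N_C(b)$. After that, $a$, $b$ and two consecutive hole vertices around a common neighbour give the forbidden subgraph directly.
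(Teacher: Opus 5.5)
Your hammer argument for $d(V(T),V(C))\ge 2$ is correct; it is exactly the paper's step for $h\ge 3$. The trouble starts with your extremal choice. Minimising over \emph{all} pairs $(T,C)$ allows $d=0$, or $d=1$ with no vertex of $T$ in $M(V(C))$. Then Lemmas~\ref{L-11} and~\ref{P-1} are unavailable, and no argument that looks only at $T$, $C$ and local perfection can succeed. For example, take a $5$-hole $v_1\cdots v_5$ plus a vertex $y$ adjacent to $v_1,\dots,v_4$. This graph is bull-free, hammer-free and locally perfect: its complement is $C_5$ with a pendant vertex, which has no triangle and no $C_4$. Yet $yv_1v_2$ is a triangle on the hole, and your predicted hammer $\{y,v_{n-2},v_{n-1},v_n,v_{n+1}\}$ fails because $y\sim v_{n+1}$.

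The paper instead takes the hole inside $M(t)$ for a vertex $t$ of the triangle, and minimises the length of a shortest triangle--hole path over all such choices. So the triangle always keeps a vertex anticomplete to the hole. With that choice your hammer case can be finished locally. Take a triangle $abc$ with $a\in M(V(C))$ and $b,c\in N(V(C))$. Lemma~\ref{P-1}, together with a hammer check when one of $b,c$ has a single hole-neighbour, gives $N_C(b)=N_C(c)$. Then a common neighbour $v_1$ forces, via the hammers on $\{b,c,v_i,v_{i+1},v_{i+2}\}$, adjacency to $v_3,v_5,\dots,v_k$. This contradicts the stability given by Lemma~\ref{L-11}.

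For the house there is a genuine missing idea: local configurations never suffice, even with the better choice. The following are all bull-free, house-free and locally perfect:
\begin{itemize}
\item a triangle joined to a $5$-hole by a path $tp_1p_2$ with $p_2\in V(C)$;
\item a $5$-hole plus a vertex adjacent to exactly $v_1,v_2,v_3$;
\item a triangle $zab$ with $z\in M(V(C))$ and $N_C(a)=N_C(b)=\{v_1\}$.
\end{itemize}
So your steps ``should also collapse quickly'', ``I also expect an induced house'' and ``give the forbidden subgraph directly'' are unfounded. The configurations you list yield hammers, which are allowed here.

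What is needed is Lemma~\ref{homogeneous}, applied to a suitable connected set. In Claim~\ref{cc-2} the paper shows that the component $S$ of $G[N(a)\cap N(V(C_2))]$ through $bc$ has constant trace on $C_2$, and then analyses a vertex splitting $S$. In the last step it takes a vertex $v$ splitting the component of $G[(N(x)\setminus\{p_2\})\cap M(V(C_2))]$ through $yz$. Either $v$ misses $x$, giving a bull or house with $p_2$. Or $v$ lies in $N(V(C_2))$ and creates the triangle $t_1vx$, contradicting minimality or Claim~\ref{cc-2}.

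Non-homogeneity of the pair $T\setminus\{t\}$ alone, as your ``private non-neighbour'' suggests, is not enough. The splitting vertex may just be another neighbour of $t$ that forms a new triangle, which is why one must take a whole component.
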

	\begin{proof}
	Let $G$ be a (bull, $H$)-free MNWD graph, where $H\in \{\text{house, hammer}\}$. Suppose to the contrary that $G$ contains a triangle $C_1=xyzx$. Let $t\in \{x,y,z\}$. By Lemma  \ref{MNWD}, we have that $G$ is a connected locally perfect graph, and $G[M(t)]$ is imperfect and contains no odd antihole with at least 7 vertices. By Theorem~\ref{perfect}, $G[M(t)]$ contains an odd hole, say $C_2=v_1v_2\cdots v_kv_1$, where $k\geq5$ is an odd integer. 
	
	It is certain that $V(C_1)\cap V(C_2)=\emptyset$ as $t\in M(V(C_2))$. This implies that there exists a shortest path $P=p_1p_2\cdots p_h$ with $h\geq2$ connecting $C_1$ and $C_2$ such that $p_1\in V(C_1)$ and $p_h\in V(C_2)$. Among all of triangles in $G$ and all of vertices of such triangles, we choose a triangle and a vertex of the triangle such that $|V(P)|$ is minimum, and we still write as $C_1,C_2, P$, and suppose that $t=y$. Notice that $y\in M(V(C_2))$. We next prove that following claim.
	
	\begin{claim}\label{cc-2}
		There does not exist a triangle $abca$ in $G$ such that $a\in M(V(C_2))$ and $\{b,c\}\subseteq N(V(C_2))$. 
	\end{claim}
	\begin{proof}
	On the contrary,
	 let $S$ be the component of $G[N(a)\cap N(V(C_2))]$ which contains the edge $bc$. Clearly $1<|V(S)|<|V(G)|$. We next show that
	\begin{equation}\label{eq-2}
		\mbox{for any two vertices $t_1,t_2\in V(S)$, $N_{C_2}(t_1)=N_{C_2}(t_2)$.}
	\end{equation}
	
	It suffices to prove that for any two adjacent vertices $t_1,t_2\in V(S)$, $N_{C_2}(t_1)=N_{C_2}(t_2)$ as $G[S]$ is connected. Suppose to the contrary that there exists an edge $t_1t_2$ such that $N_{C_2}(t_1)\ne N_{C_2}(t_2)$. By Lemma~\ref{P-1}, we have that at least one of $\{t_1,t_2\}$ has exactly one neighbor in $N(V(C_2))$. Without loss of generality, set
	 $N_{C_2}(t_1)=\{v_1\}$. 
	
 Suppose $H$=hammer. Since $N_{C_2}(t_1)\ne N_{C_2}(t_2)$, let $v_j\in V(C_2)$ such that $t_2\sim v_j$ and $j\ne1$. By Lemma~\ref{L-11}, $t_2$ is anticomplete to $\{v_{j-1},v_{j+1}\}$.  If $j=2$, then $\{t_1,t_2,a,v_2,v_3\}$ induces a hammer; otherwise $\{t_1,t_2,a,v_j,v_{j-1}\}$ induces a hammer, a contradiction.

	 Suppose $H$=house. Let $v_j$ be a neighbor of $t_2$ such that $v_j\in V(C_2)\setminus \{v_1\}$. If $j=2$ or $j=k$, then $\{a,t_1,t_2,v_1,v_j\}$ induces a house; otherwise $\{a,t_1,t_2,v_1,v_j\}$ induces a bull. Hence $t_2\sim v_1$. By Lemma~\ref{L-11}, $\{t_1,t_2\}$ is anticomplete to $\{v_2,v_k\}$. Since  $N_{C_2}(t_1)\ne N_{C_2}(t_2)$, there is a vertex $v_i\in V(C_2)$ such that $t_2\sim v_i$, where $i\notin\{1,2,k\}$. However, $\{t_1,t_2,v_1,v_2,v_i\}$ induces a bull if $v_i\not\sim v_2$ or a house if $v_i\sim v_2$, a contradiction. This proves (\ref{eq-2}). 
	
	By Lemma~\ref{homogeneous}, $V(S)$ is not a homogeneous set of $G$, and so there is a vertex $v\in V(G)\setminus V(S)$ such that $v$ is neither complete to nor anticomplete to $V(S)$. Hence, there are two vertices $t_3,t_4\in V(S)$ such that $vt_3t_4$ is an induced $P_3$. By (\ref{eq-2}), $N_{C_2}(t_3)=N_{C_2}(t_4)$ and so $v\notin V(C_2)$. Without loss of generality, we assume that $v_1$ is complete to $\{t_3,t_4\}$. By Lemma~\ref{L-11}, $\{t_3,t_4\}$ is anticomplete to $\{v_2,v_k\}$. 
	
	Suppose that $H$=hammer. To avoid an induced hammer on $\{t_3,t_4,v_1,v_2,v_3\}$, $v_3$ is complete to $\{t_3,t_4\}$. By Lemma~\ref{L-11}, $v_4$ is anticomplete to $\{t_3,t_4\}$. Similarly, since $G$ is hammer-free, $\{t_3,t_4\}$ is complete to $\{v_5,v_7,\cdots,v_k\}$, which contradicts that $\{t_3,t_4\}$ is anticomplete to $v_k$.  
	
	Suppose that $H$=house.  If $v\not\sim v_1$, then $\{t_3,t_4,v_1,v_2,v\}$ induces a bull whenever $v\not\sim v_2$, and $\{t_3,t_4,v_1,v_2,v\}$ induces a house whenever $v\sim v_2$, both are contradictions. Hence $v\sim v_1$. Recall that $S$ is the component of $G[N(a)\cap N(V(C_2))]$, and so $v\not\sim a$. To avoid an induced bull on $\{v_1,t_3,v,a,v_2\}$, we have that $v\sim v_2$. Similarly, $v\sim v_k$. If $v\not\sim v_3$, then $\{v_1,v_2,v,v_3,t_4\}$ induces a bull whenever $t_4\not\sim v_3$, and $\{v_1,v_2,v,v_3,t_4\}$ induces a house whenever $t_4\sim v_3$, both are contradictions. Hence $v\sim v_3$. Similarly, $v\sim v_{k-1}$. 
	Since $G$ is locally perfect, we have that $k\geq7$ and let $n\in\{4,\cdots,k-2\}$ be the minimum integer such that $v_n\not\sim v$. However, $\{v_{n-2},v_{n-1},v_n,v,v_k\}$ induces a bull. This proves Claim~\ref{cc-2}.
	\end{proof}
	
	By Claim~\ref{cc-2}, $\{x,z\}\cap M(V(C_2))\ne\emptyset$ and so we may by symmetry assume that $z\in M(V(C_2))$.  Without loss of generality, assume that $p_1=x$ and $p_h=v_1$.  Let $S$ be the component of $G[(N(x)\setminus\{p_2\})\cap M(V(C_2))]$ which contains the edge $yz$. Clearly $1<|V(S)|<|V(G)|$. By the choice of $C_1,C_2,P,$ and $y$, we have that 
	\begin{equation}\label{eq-3}
		\mbox{$p_2$ is anticomplete to $V(S)$. }
	\end{equation}

	By Lemma~\ref{homogeneous}, $V(S)$ is not a homogeneous set of $G$. So there exists a vertex $v\in V(G)\setminus V(S)$ satisfying that $v$ has a neighbor and a non-neighbor in $V(S)$. Since $G[S]$ is connected, there exist $t_1,t_2\in V(S)$ such that $vt_1t_2$ is an induced $P_3$. 
	
	Suppose that $H$=hammer. If $h\geq3$, then $\{t_1,t_2,x,p_2,p_3\}$ induces a hammer by the choice of $P$. If $h=2$, then $x\sim v_1$ and so $x\not\sim v_2$ by Lemma~\ref{L-11}; however $\{t_1,t_2,x,v_1,v_2\}$ induces a hammer, a contradiction. 
	
	Suppose that $H$=house. It is certain that $v\ne p_2$ by (\ref{eq-3}). If $v\not\sim x$, then $\{t_1,t_2,x,v,p_2\}$ induces a bull whenever $v\not\sim p_2$, and $\{t_1,t_2,x,v,p_2\}$ induces a house whenever $v\sim p_2$, both are contradictions. Hence $v\sim x$. Since $v\notin V(S)$, we have that $v\notin M(V(C_2))$ by the choice of $S$. 
	Since $t_1\in M(V(C_2))$, we have that $v\notin V(C_2)$, which implies $v\in N(V(C_2))$. 
	
	Now there exists a triangle $t_1vxt_1$, an odd hole $C_2$, a vertex $t_1\in M(V(C_2))$, and an path $P'=vv_j$, where $v_j\in V(C_2)$ is a neighbor of $v$. By the choice of $C_1,C_2,P$, and $y$, we have that $|V(P)|\leq |V(P')|=2$, which implies that $x\in N(V(C_2))$. But the triangle $t_1vxt_1$ is the one that Claim~\ref{cc-2} asserts cannot exist, a contradiction. This completes the proof of Lemma~\ref{P-2}.
	\end{proof}
	

	\begin{theorem}\label{pollyanna-3}
		Let ${\cal C}$ be the class of $(\text{bull}, H)$-free graphs, where $H\in \{\text{house, hammer}\}$, ${\cal G}$ be a  hereditary class of graphs, and ${\cal H}$ be the class of triangle-free graphs. Then ${\cal G}\cap {\cal C}$ is perfectly weight divisible if and only if  ${\cal G}\cap {\cal H}$ is perfectly weight divisible.
	\end{theorem}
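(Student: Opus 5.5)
The plan is to prove the two directions separately, with the nontrivial direction reduced to Lemma~\ref{P-2} via a minimal counterexample.

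\emph{Necessity.} Suppose ${\cal G}\cap{\cal C}$ is perfectly weight divisible. Every triangle-free graph contains no bull, no house and no hammer, since each of these contains a triangle. Hence ${\cal G}\cap{\cal H}\subseteq{\cal G}\cap{\cal C}$, and ${\cal G}\cap{\cal H}$ is perfectly weight divisible.

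\emph{Sufficiency.} Assume ${\cal G}\cap{\cal H}$ is perfectly weight divisible, and suppose for contradiction that some $G\in{\cal G}\cap{\cal C}$ is not perfectly weight divisible. Choose such a $G$ with $|V(G)|$ minimum. Every proper induced subgraph of $G$ lies in ${\cal G}\cap{\cal C}$, because both ${\cal G}$ and ${\cal C}$ are hereditary, so by minimality each proper induced subgraph is perfectly weight divisible. Perfect weight divisibility asks for a good weighted partition for every induced subgraph and every weight function. Since all proper induced subgraphs are handled, the failure must occur at $H=G$ itself. Thus $G$ is MNWD.

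Now $G$ is a (bull, $H$)-free MNWD graph with $H\in\{\text{house, hammer}\}$. By Lemma~\ref{P-2}, $G$ is triangle-free, so $G\in{\cal G}\cap{\cal H}$. By hypothesis $G$ is then perfectly weight divisible, which contradicts the choice of $G$. This proves sufficiency.

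The only point that needs care is the claim that a minimal non-perfectly-weight-divisible member of the class is MNWD in the paper's sense. This holds because the definition quantifies over all induced subgraphs, so minimality transfers directly. All the structural work is already contained in Lemma~\ref{P-2}. As an optional remark, Lemma~\ref{3-coloring} shows that for triangle-free graphs weight divisibility is equivalent to $3$-colorability, which connects this statement to Theorem~\ref{pollyanna-2}; it is not needed for the proof itself.
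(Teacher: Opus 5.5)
Your proposal is correct and follows the paper's proof: a minimum counterexample in ${\cal G}\cap{\cal C}$ is MNWD because both classes are hereditary, Lemma~\ref{P-2} makes it triangle-free, and so it lies in ${\cal G}\cap{\cal H}$, a contradiction. You also spell out two steps the paper leaves implicit: the easy direction (bull, house and hammer each contain a triangle, so ${\cal G}\cap{\cal H}\subseteq{\cal G}\cap{\cal C}$), and why an MNWD graph must exist in ${\cal G}\cap{\cal C}$.
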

	\begin{proof}
		 It suffices to show that if each graph in ${\cal G}\cap {\cal H}$ is perfectly weight divisible, then each graph in ${\cal G}\cap {\cal C}$ is perfectly weight divisible. For the sake of contradiction, assume that there exists a MNWD graph $G$ in ${\cal G}\cap {\cal C}$. By Lemma \ref{P-2}, $G$ is triangle-free, and so $G\in {\cal G}\cap {\cal H}$, a contradiction. This proves Theorem~\ref{pollyanna-3}. 
	\end{proof}
	We are now ready to prove Theorems~\ref{evenhole} and~\ref{pollyanna-2}.
	
	\noindent\textbf{{\em Proof of Theorem~\ref{evenhole}:}} Notice that each even-hole-free graph is house-free. Let ${\cal G}$ be the class of even hole-free graphs, ${\cal C}$ be the class of $(\text{house, bull})$-free graphs, and ${\cal H}$ be the class of triangle-free graphs.  By Lemma \ref{P-000}, each graph of ${\cal G}\cap {\cal H}$ is 3-colorable, and, by Lemma \ref{3-coloring}, it is also perfectly weight divisible. By Theorem~\ref{pollyanna-3}, ${\cal G}\cap {\cal C}$ is perfectly weight divisible.\qed

	\noindent\textbf{{\em Proof of Theorem~\ref{pollyanna-2}:}}  By Lemma~\ref{3-coloring} and Theorem~\ref{pollyanna-3}, it is easy to verify that the Theorem~\ref{pollyanna-2} holds if $H\in\{\text{house, hammer}\}$. By Theorem~\ref{diamond}, we have that Theorem~\ref{pollyanna-2} holds if $H$=diamond. This completes the proof of Theorem~\ref{pollyanna-2}.  \qed

\section{Proof of Theorem~\ref{4K1}}\label{4}

In this section, we will prove the Theorem~\ref{4K1}. Let $G$ be a graph and $K\subseteq V(G)$ be a nonempty clique of $G$. We say that $K$ is a {\em clique cutset} of $G$ if $G-K$ has more components than $G$. {\em Substituting} a vertex $v$ of a graph $G$ by a graph $H$ is an operation which creates a new graph with vertex set $V(H)\cup V(G-v)$ and edge set $E(G-v)\cup \{xy~|~x\in V(H), y\in N_{G}(v)\}\cup E(H)$. When $H$ is a nonempty clique, the substitution is said to be {\em blowing up} $v$ of $G$ into a clique. A graph $G$ obtained from a graph $H$ by blowing up all the vertices into nonempty cliques is said to be a {\em clique blowup} of $H$.  Ho\`ang in \cite{H22} studied the structure and clique cutsets of MNPD graphs, and we prove the following lemma.

\begin{lemma}\label{cut}
	Let $G$ be a $4K_1$-free MNWD graph. Then $G$ has no clique cutset. 
\end{lemma}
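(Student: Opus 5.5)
Suppose to the contrary that $K$ is a clique cutset of $G$. The plan is a standard gluing argument; the $4K_1$-freeness is used only to control the number of components of $G-K$. Since $G$ is $4K_1$-free, $G-K$ has at most three components, and if it has three, each is a clique.

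\textbf{Step 1: the three-component case.} Suppose $G-K$ has three components $A_1,A_2,A_3$. Since $G$ is $4K_1$-free, each $A_i$ is a clique. I will show directly that $G$ is perfect, which contradicts $G$ being MNWD. Indeed, a perfect graph is trivially perfectly weight divisible, taking $W=\emptyset$ for every induced subgraph.

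Every vertex set that is not a clique lies in the union of at most one $A_i$ with $K$, or else contains a stable set meeting two of the $A_i$. So I will check directly that no odd hole or odd antihole survives. A hole through $K$ that enters two components has to pass through $K$ twice. Since $K$ is a clique, this forces a chord unless the hole has length $4$. So any hole lies in some $G[A_i\cup K]$, which is the union of two cliques, hence cobipartite, hence perfect. Antiholes are connected in the complement, and the complement of $G[A_i\cup A_j\cup K]$ is bipartite-like, so they are handled the same way. Rather than do this casework, I would prefer to fold this case into Step 2, which applies with any number of components.

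\textbf{Step 2: the general case.} Write $V(G)-K=A\cup B$ with $A$ nonempty, $B$ nonempty, and $A$ anticomplete to $B$. Set $G_1=G[A\cup K]$ and $G_2=G[B\cup K]$; both are proper induced subgraphs, hence perfectly weight divisible. Fix a positive weight $w$. It suffices to find a good partition of $G$ itself, because every proper induced subgraph is handled by minimality.

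Take good partitions $(P_i,W_i)$ of $G_i$ with respect to $w$, and try $(P_1\cup P_2,\,W_1\cup W_2)$. The difficulty is that the two partitions may disagree on $K$. To fix this, I will choose them compatibly: first partition $G_1$, then modify the weight on $G_2$ so that the vertices of $K$ are forced the same way. Concretely, give each vertex of $K\cap P_1$ a huge weight, or simply delete $K\cap W_1$. Then choose a good partition of $G_2' = G_2 - (K\cap W_1)$ and add $K\cap W_1$ to the $W$-side.

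\textbf{Step 3: checking the partition.} The union of the two perfect parts is perfect, since perfect graphs are closed under clique-cutset gluing (Theorem~\ref{perfect}, or the standard Gallai result). Any clique of $G$ lies within $G_1$ or within $G_2$, so the weight bound on the $W$-side can be checked separately on each side.

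The main obstacle is making the parts agree on $K$ while keeping $\omega_w$ strictly smaller in $G_2$. The bound in $G_2$ must be strict relative to $\omega_w(G)$, not relative to $\omega_w(G_2')$. The first thing I would try is to choose $(P_2,W_2)$ for $G_2'$ only when $\omega_w(G_2')=\omega_w(G)$; otherwise I would put all of $B$ into $W$. I would then check the cliques straddling $K\cap W_1$ and $B$, possibly using $4K_1$-freeness, which forces $G[B]$ to have stability number at most $2$, to bound them.
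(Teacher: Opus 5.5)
There is a genuine gap: Steps 2--3 never produce a good partition of $G$. You partition $G_1$, then take a good partition $(P_2',W_2')$ of $G_2'=G_2-(K\cap W_1)$. First, nothing prevents a vertex of $K\cap P_1$ from landing in $W_2'$, so the two sides can still disagree on $K$. Second, once you add $K\cap W_1$ to the $W$-side, cliques of $G_2$ that combine vertices of $K\cap W_1$ with vertices of $W_2'$ are not controlled at all. The bound $\omega_w(G[W_2'])<\omega_w(G_2')$ says nothing about $W_2'\cup(K\cap W_1)$. Your fallback (put all of $B$ into $W$ when $\omega_w(G_2')<\omega_w(G)$) fails for the same reason: $G[B\cup(K\cap W_1)]$ may contain a maximum-weight clique of $G$. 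In effect you are attempting the general statement that MNWD graphs have no clique cutsets, and your construction does not establish it. The $4K_1$ hypothesis has to do real work, and in your proposal it essentially never does.

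The missing observation is that $A$ and $B$ are nonempty and anticomplete, so $\alpha(G[A])+\alpha(G[B])\le\alpha(G)\le 3$. Hence one side, say $A$, is a clique. Then $G_1=G[A\cup K]$ is a union of two cliques, hence perfect, and you should not partition $G_1$ at all. Instead, take a good partition $(P_2,W_2)$ of the proper induced subgraph $G_2$ and use $(P_2\cup A,\,W_2)$. The weight condition is then automatic: $W_2\subseteq V(G_2)$ gives $\omega_w(G[W_2])<\omega_w(G_2)\le\omega_w(G)$. For perfection, $G[P_2\cup A]$ is obtained by gluing the perfect graphs $G[P_2]$ and $G[(P_2\cap K)\cup A]$ along the clique $P_2\cap K$, since $A$ is anticomplete to $P_2\cap B$. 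This is exactly the clique-cutset gluing fact you invoke in Step 3. The paper verifies the same point directly via Theorem~\ref{perfect}, ruling out an odd hole crossing the clique and an odd antihole by counting non-neighbours. This also makes your Step 1 unnecessary, since three components each a clique is just a special case.
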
 
\begin{proof}
Suppose to the contrary that $K$ is a clique cutset of $G$. Let $(V_1,V_2)$ be a partition of $G-K$ such that $V_1$ is anticomplete to $V_2$. For $i\in\{1,2\}$, let $G_i=G[K\cup V_i]$. Since $G$ is $4K_1$-free, one of $V_1$ and $V_2$ is a clique. By symmetry, we may assume that $V_1$ is a clique, and so $G_1$ is perfect by Theorem~\ref{perfect}. 

Let $w$ be a positive integer weight function on $V(G)$. Since $G$ is MNWD, there exists a partition $(A',B')$ of $V(G_2)$ such that $G[A']$ is perfect and $\omega_w(G[B'])<\omega_w(G_2)\leq \omega_w(G)$. Let $A=A'\cup V_1$ and $B=B'$. It is easy to see that $(A,B)$ is a partition of $V(G)$. Then $G[A]$ is imperfect as $G$ is MNWD. 

Let $X_1=V_1$, $X_2=A\cap V_2$ and $S=A\cap K$. It is certain that $G[S\cup X_1]$ is perfect, and $G[S\cup X_2]$ is perfect (as $G[S\cup X_2]=G[A']$).  By Theorem~\ref{perfect},  we have that there is an odd hole or odd antihole $H$ in $G[A]$ such that $V(H)\cap X_i\ne\emptyset$ for $i\in \{1,2\}$. Since $S$ is a clique, $H$ is not an odd hole, and hence $H$ is an odd antihole. Clearly, $|V(H)|\geq7$. Let $x\in V(H)\cap X_1$ and $y\in V(H)\cap X_2$. If $V(H)\cap X_1=\{x\}$ and $V(H)\cap X_2=\{y\}$, then $|V(H)\cap S|\geq |V(H)|-2$, and so $H$ contains a clique of size $|V(H)|-2$, a contradiction. Hence $|V(H)\cap X_1|\ge 2$ or $|V(H)\cap X_2|\ge 2$.

By symmetry, we assume that $|V(H)\cap X_1|\geq2$. Since $y$ is anticomplete to $X_1$, $y$ has exactly two non-neighbors in $H$, and so $|V(H)\cap X_1|=2$. If $|V(H)\cap X_2|=1$, then $y$ is complete to $V(H)\cap S$, and so $V(H)\cap (S\cup X_2)$ is a clique of size $|V(H)|-2$, a contradiction. Hence, we assume that $|V(H)\cap X_2|\geq 2$. Since $x$ has exactly two non-neighbors in $H$, we have that $|V(H)\cap X_2|=2$. However, $V(H)\cap (S\cup X_1)$ is a clique of size $|V(H)|-2$, a contradiction. This proves Lemma~\ref{cut}. 
\end{proof}

\noindent\textbf{{\em Proof of Theorem~\ref{4K1}:}} Suppose to the contrary that $G$ is a (bull, $4K_1$)-free MNWD graph. Let $v\in V(G)$. By Lemma~\ref{MNWD} and Theorem~\ref{perfect}, $G$ is a connected locally perfect graph, and $G[M(v)]$ contains an odd hole. Since $G$ is $4K_1$-free, we have that $G[M(v)]$ contains a 5-hole $C=v_1v_2\cdots v_5v_1$.  From now on, the subscript is modulo 5. For $ i\in [5]$, let 
\begin{eqnarray*}
	X_i&=&\{u\in N(V(C))~|~N_{C}(u)=\{v_i\}\};\\	
	Y_i&=&\{u\in N(V(C))~|~N_{C}(u)=\{v_i,v_{i+2}\}\};\\
	Z_i&=&\{u\in N(V(C))~|~N_{C}(u)=\{v_{i-1},v_{i},v_{i+1}\}\};\\
	W_i&=&\{u\in N(V(C))~|~N_{C}(u)=\{v_i,v_{i+1},v_{i+2},v_{i+3}\}\}.
\end{eqnarray*}
Let $X=\bigcup_{i=1}^5X_i$, $Y=\bigcup_{i=1}^5Y_i$, $Z=\bigcup_{i=1}^5Z_i$ and $W=\bigcup_{i=1}^5W_i$. Let $M=M(V(C))$. Clearly, $v\in M$. We next prove the following claims.

\begin{claim}\label{cla-1}
	$V(G)=V(C)\cup X\cup Y\cup Z\cup W\cup M$.
\end{claim}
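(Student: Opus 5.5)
Every vertex of $G$ lies in $V(C)$, in $M=M(V(C))$, or in $N(V(C))$. So it suffices to show that every $u\in N(V(C))$ has $N_C(u)$ equal to one of the four shapes listed: a single vertex, a stable pair $\{v_i,v_{i+2}\}$, three consecutive vertices, or four consecutive vertices. Up to rotation, the subsets of $V(C)$ not of these shapes are exactly three: the set $\{v_1,v_2\}$ (two adjacent vertices), the set $\{v_1,v_2,v_4\}$ (an edge plus the vertex opposite it), and the whole of $V(C)$. I will exclude these three in turn.

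\textbf{The whole of $V(C)$.} If $N_C(u)=V(C)$, then $G[N(u)]$ contains the $5$-hole $C$. This contradicts local perfectness, which holds by Lemma~\ref{MNWD}.

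\textbf{Two adjacent vertices.} Suppose $N_C(u)=\{v_1,v_2\}$. Then $\{v_5,v_1,u,v_2,v_3\}$ is a triangle $v_1v_2u$ with pendant vertices $v_5$ at $v_1$ and $v_3$ at $v_2$. We have $v_5\not\sim v_2$, $v_3\not\sim v_1$, $v_5\not\sim v_3$, and $u$ is anticomplete to $\{v_3,v_5\}$. So this set induces a bull, a contradiction.

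\textbf{An edge plus the opposite vertex.} Suppose $N_C(u)=\{v_1,v_2,v_4\}$. Consider the triangle $uv_1v_2$ with pendants $v_5$ at $v_1$ and $v_3$ at $v_2$. This fails to be a bull only because $u$ is adjacent to neither pendant, which is in fact the case here, and $v_3\not\sim v_5$. So $\{v_5,v_1,u,v_2,v_3\}$ again induces a bull, a contradiction.

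Combining the three cases gives Claim~\ref{cla-1}. I do not anticipate any obstacle: the only step needing care is checking that this list of excluded subsets is complete up to rotation. Note that none of the cases uses the $4K_1$-free hypothesis or Lemma~\ref{L-11}.
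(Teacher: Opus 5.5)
Your proof is correct and essentially matches the paper's. Both arguments rest on the same two ingredients. The first is the bull formed by $u$, an edge $v_iv_{i+1}$ of $C$ inside $N_C(u)$, and the outer neighbours $v_{i-1},v_{i+2}$ when $u$ misses both. The second is local perfectness, which rules out $N_C(u)=V(C)$. The paper organizes this as a case split on whether $u$ sees $v_2$ or $v_5$, whereas you enumerate the three excluded subset types up to rotation directly, and that enumeration is indeed complete.
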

\begin{proof}
It suffices to prove that $N(V(C))\subseteq X\cup Y\cup Z\cup W$. Let $u\in N(V(C))$. By symmetry, we may assume that $u\sim v_1$. If $N_C(u)=\{v_1\}$, then $u\in X_1$. So we may assume that $u$ has a neighbor in $\{v_2,\cdots,v_5\}$. 

First, suppose that $u$ has a neighbor in $\{v_2,v_5\}$. By symmetry, we may assume that $u\sim v_2$. To avoid an induced bull on $\{u,v_1,v_2,v_3,v_5\}$, $u$ has a neighbor in $\{v_3,v_5\}$. By symmetry, suppose that $u\sim v_3$. Since $G$ is locally perfect, $u$ has a nonneighbor in $\{v_4,v_5\}$. Then $N_C(u)\in \{\{v_1,v_2,v_3\},\{v_1,v_2,v_3,v_4\},\{v_1,v_2,v_3,v_5\}\}$, and so $u\in Z\cup W$. 

Next, suppose that $u$ is anticomplete to $\{v_2,v_5\}$. Then $u$ has exactly one neighbor in $\{v_3,v_4\}$ as otherwise $\{u,v_3,v_4,v_2,v_5\}$ induces a bull. So $u\in Y$. This proves Claim~\ref{cla-1}.
\end{proof}
\begin{claim}\label{cla-2}
	$X\cup Y$ is complete to $M$, and $N(M)=X\cup Y$.
\end{claim}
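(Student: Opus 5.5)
The plan is to prove the two parts of Claim~\ref{cla-2} separately: first that every vertex of $X\cup Y$ is adjacent to every vertex of $M$ (using $4K_1$-freeness), and then that no vertex outside $X\cup Y\cup M$ has a neighbor in $M$ (using Lemma~\ref{L-11}). Throughout I use Claim~\ref{cla-1}, and the facts that $v\in M$ (so $M\neq\emptyset$) and that, by definition of $M=M(V(C))$, every vertex of $M$ is anticomplete to $V(C)$.

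\emph{Completeness.} Let $m\in M$ and $u\in X\cup Y$, and suppose $u\not\sim m$. Then I exhibit an induced $4K_1$, contradicting that $G$ is $4K_1$-free.
\begin{itemize}
\item If $u\in X_i$, then by symmetry $i=1$ and $N_C(u)=\{v_1\}$. The vertices $v_3,v_5$ are non-adjacent in the $5$-hole and both are non-neighbors of $u$. Hence $\{m,u,v_3,v_5\}$ is a stable set.
\item If $u\in Y_i$, then by symmetry $N_C(u)=\{v_1,v_3\}$. The vertices $v_2,v_4$ are non-adjacent in $C$ and both are non-neighbors of $u$. Hence $\{m,u,v_2,v_4\}$ is a stable set.
\end{itemize}
In both cases $m$ is anticomplete to $V(C)$ and $m\not\sim u$, so the set is indeed stable. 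Therefore $X\cup Y$ is complete to $M$.

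\emph{Neighborhood of $M$.} By definition, $M$ is anticomplete to $V(C)$, so every neighbor of $M$ outside $M$ lies in $N(V(C))$. By Claim~\ref{cla-1}, such a neighbor lies in $X\cup Y\cup Z\cup W$. Now suppose $u\in Z\cup W$ has a neighbor $m\in M$. Then $u\in N(V(C))$ and $m\in M(V(C))$ are adjacent. Since $G$ is locally perfect and bull-free (Lemma~\ref{MNWD}), Lemma~\ref{L-11} says that $N_C(u)$ is a stable set. But every vertex of $Z\cup W$ has two consecutive, hence adjacent, neighbors on $C$. This contradiction shows $N(M)\subseteq X\cup Y$.

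For the reverse inclusion $X\cup Y\subseteq N(M)$, I use the completeness part together with $M\neq\emptyset$: every vertex of $X\cup Y$ is adjacent to $v\in M$. Hence $N(M)=X\cup Y$.

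There is no real obstacle here. The only point needing care is choosing, for each type of vertex, two non-adjacent vertices of $C$ that both avoid $N_C(u)$; this is immediate for the $5$-hole.
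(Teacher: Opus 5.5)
Your proof is correct and follows essentially the paper's argument: completeness of $X\cup Y$ to $M$ comes from the same induced $4K_1$ (the paper uses the pair $v_2,v_4$ for both $X_1$ and $Y_1$). For $N(M)\subseteq X\cup Y$ you invoke Lemma~\ref{L-11}, whereas the paper writes out the explicit bull $\{u',v_1,v_2,u'',v_5\}$ for $u'\in Z_2\cup W_1$, which is just the instance of that lemma's proof; you are also more explicit than the paper that $X\cup Y\subseteq N(M)$ needs $M\neq\emptyset$, which holds since $v\in M$.
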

\begin{proof}
Let $u\in X\cup Y$. Without loss of geneality, let $u\in X_1\cup Y_1$. To avoid an induced $4K_1$ on $\{u,v_2,v_4,t\}$, where $t\in M$, we have that $u$ is complete to $M$, and so $X\cup Y$ is complete to $M$. This implies $X\cup Y\subseteq N(M)$. 

By Claim~\ref{cla-1}, $N(M)\subseteq N(V(C))=X\cup Y\cup Z\cup W$. Suppose $N(M)\not\subseteq X\cup Y$. There exists a vertex $u'\in Z\cup W$ such that $u'$ has a neighbor in $M$, say $u''$. By symmetry, assume that $u'\in Z_2\cup W_1$. But $\{u',v_1,v_2,u'',v_5\}$ induces a bull, a contradiction. So Claim~\ref{cla-2} holds.
\end{proof}

\begin{claim}\label{cla-3}
	$M=\{v\}$, and so $V(G)=V(C)\cup X\cup Y\cup Z\cup W\cup\{v\}$.
\end{claim}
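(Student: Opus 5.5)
The plan is to show that $M$ is a homogeneous set of $G$ whenever $|M|\ge 2$, and then contradict Lemma~\ref{homogeneous}, since $G$ is MNWD. This approach works directly from Claims~\ref{cla-1} and~\ref{cla-2} and needs no further case analysis on the $5$-hole.

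First, I classify the vertices outside $M$ by Claim~\ref{cla-1}, which gives $V(G)\setminus M=V(C)\cup X\cup Y\cup Z\cup W$.
\begin{enumerate}
\item Every vertex of $V(C)$ is anticomplete to $M$, by the definition $M=M(V(C))$.
\item By Claim~\ref{cla-2}, $N(M)=X\cup Y$. Hence every vertex of $Z\cup W$ has no neighbor in $M$, so it is anticomplete to $M$.
\item By Claim~\ref{cla-2} again, $X\cup Y$ is complete to $M$.
\end{enumerate}
So every vertex of $V(G)\setminus M$ is either complete or anticomplete to $M$.

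Next, suppose $|M|\ge 2$. We have $|M|<|V(G)|$ because $V(C)\neq\emptyset$ and $V(C)\cap M=\emptyset$. Together with the classification above, $M$ is then a homogeneous set of $G$. This contradicts Lemma~\ref{homogeneous}.

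Finally, since $v\in M$, we conclude $M=\{v\}$. Substituting this into Claim~\ref{cla-1} gives $V(G)=V(C)\cup X\cup Y\cup Z\cup W\cup\{v\}$.

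The only point needing care is the second case of the classification, that vertices of $Z\cup W$ have no neighbors in $M$. This is exactly the content of $N(M)=X\cup Y$ in Claim~\ref{cla-2}, so I expect no real obstacle.
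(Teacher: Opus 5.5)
Your proposal is correct and follows the paper's proof: if $|M|\ge 2$, Claim~\ref{cla-2} makes $M$ a homogeneous set, contradicting Lemma~\ref{homogeneous}. You just spell out what the paper leaves implicit, namely the complete/anticomplete check for $V(C)$, $Z\cup W$ and $X\cup Y$, and the size condition $1<|M|<|V(G)|$.
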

\begin{proof}
Suppose to the contrary that $|M|\geq2$. Then $M$ is a homogeneous set of $G$ by Claim~\ref{cla-2}, which contradicts Lemma~\ref{homogeneous}. So Claim~\ref{cla-3} holds. 
\end{proof}

Let $i\in [5]$. It is certain that both $X_i$ and $Y_i$ are cliques as $G$ is $4K_1$-free. By Claim~\ref{cla-2}, we can deduce that both $Z_i$ and $W_i$ are cliques. We will prove the following claims. 
\begin{claim}\label{cla-4}
	Both $X_i\cup Y_i$ and $Z_{i+1}\cup W_i$ are cliques.
\end{claim}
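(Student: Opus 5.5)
The plan is to argue each part by contradiction and find a stable set of size four in every case, contradicting that $G$ is $4K_1$-free. The cliqueness of each individual set $X_i,Y_i,Z_i,W_i$ is already known, so it only remains to rule out a non-edge between the two parts. By the rotational symmetry of $C$ (indices modulo $5$) it suffices to treat $i=1$. The only earlier facts needed are the definitions of the sets $X_i,Y_i,Z_i,W_i$ and Claims~\ref{cla-2} and~\ref{cla-3}: every vertex of $X\cup Y$ is adjacent to $v$, $N(M)=X\cup Y$, and $M=\{v\}$.

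For $X_1\cup Y_1$, suppose $x\in X_1$ and $y\in Y_1$ are nonadjacent. By definition $N_C(x)=\{v_1\}$ and $N_C(y)=\{v_1,v_3\}$. So neither $x$ nor $y$ is adjacent to $v_2$ or $v_4$, and $v_2\not\sim v_4$ since $C$ is a $5$-hole. Then $\{x,y,v_2,v_4\}$ induces a $4K_1$, a contradiction.

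For $Z_2\cup W_1$, suppose $z\in Z_2$ and $w\in W_1$ are nonadjacent. Here $N_C(z)=\{v_1,v_2,v_3\}$ and $N_C(w)=\{v_1,v_2,v_3,v_4\}$, so both are nonadjacent to $v_5$. By Claim~\ref{cla-2}, $N(M)=X\cup Y$, so neither $z$ nor $w$ is adjacent to $v$. Also $v\not\sim v_5$ because $v\in M=M(V(C))$. Hence $\{z,w,v,v_5\}$ induces a $4K_1$, a contradiction.

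No step is a real obstacle. The only point needing care is choosing the fourth vertex in the second case. This uses Claim~\ref{cla-2}'s fact that $Z\cup W$ is anticomplete to $M$, which lets $v$ serve as a vertex nonadjacent to $z$, $w$ and $v_5$.
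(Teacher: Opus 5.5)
Your proof is correct and is essentially the paper's argument: the paper produces the $4K_1$ from exactly the same quadruples, $\{u,u',v_{i+1},v_{i+3}\}$ for $X_i\cup Y_i$ and $\{t,t',v,v_{i+4}\}$ for $Z_{i+1}\cup W_i$. The only minor difference is that the paper applies these quadruples to an arbitrary nonadjacent pair in the union, so it does not need the separately noted cliqueness of $X_i,Y_i,Z_i,W_i$.
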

\begin{proof}
If there exist $u,u'\in X_i\cup Y_i$ such that $u\not\sim u'$, then $\{u,u',v_{i+1},v_{i+3}\}$ induces a $4K_1$. If there exist $t,t'\in Z_{i+1}\cup W_i$ such that $t\not\sim t'$, then $\{t,t',v,v_{i+4}\}$ induces a $4K_1$. Both are contradictions. This proves Claim~\ref{cla-4}. 
\end{proof}
\begin{claim}\label{cla-5}
	 $X=X_j\cup X_{j+1}$ for some $j\in [5]$, and $X$ is a clique.
\end{claim}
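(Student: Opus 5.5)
The plan is to combine the clique facts from Claim~\ref{cla-4} with a few forbidden-subgraph checks on the $5$-hole $C$ and the vertex $v$. Recall that by Claim~\ref{cla-2} every vertex of $X$ is adjacent to $v$. Also, a vertex of $X_i$ has exactly one neighbor on $C$, namely $v_i$. By Claim~\ref{cla-4}, each $X_i$ is a clique. So it suffices to prove two things.

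\emph{First step: $X_i$ and $X_{i+2}$ are never both nonempty.} Suppose $x\in X_i$ and $x'\in X_{i+2}$.
\begin{enumerate}
\item If $x\sim x'$, then $xx'v$ is a triangle. The vertex $x$ has the pendant neighbor $v_i$ and $x'$ has the pendant neighbor $v_{i+2}$. Here $v_i\not\sim v_{i+2}$, since $C$ is a hole, and $v$ is anticomplete to $V(C)$, since $v\in M$. Moreover $x\not\sim v_{i+2}$ and $x'\not\sim v_i$. Hence $\{v_i,x,x',v,v_{i+2}\}$ induces a bull, a contradiction.
\item If $x\not\sim x'$, then $\{x,x',v_{i+1},v_{i+3}\}$ induces a $4K_1$. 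Indeed, $v_{i+1}\not\sim v_{i+3}$, and neither $x$ nor $x'$ sees $v_{i+1}$ or $v_{i+3}$. This is a contradiction.
\end{enumerate}
Thus the set $I=\{i\in[5]: X_i\neq\emptyset\}$ contains no two indices at distance $2$ modulo $5$. On $C_5$, any three indices include a pair at distance $2$. Hence $|I|\le 2$, and if $|I|=2$ the two indices are consecutive. Therefore $X=X_j\cup X_{j+1}$ for some $j\in[5]$. If $|I|\le1$, we choose $j$ so that this still holds, possibly with one part empty.

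\emph{Second step: $X_j$ is complete to $X_{j+1}$.} Combined with Claim~\ref{cla-4}, this makes $X$ a clique. Suppose $x\in X_j$ and $x'\in X_{j+1}$ with $x\not\sim x'$. Then $\{x,x',v_{j+2},v_{j+4}\}$ is a $4K_1$. To see this, note that $v_{j+2}\not\sim v_{j+4}$. Also, $x$ sees only $v_j$ on $C$ and $x'$ sees only $v_{j+1}$ on $C$. This contradicts $4K_1$-freeness.

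The only delicate point is choosing the right witnesses for the two non-adjacent cases. For two vertices of $X$ that are non-adjacent to each other, the general recipe is to find two non-adjacent vertices of $C$ avoiding both of their attachment points. For the index difference $2$, the adjacent case needs the bull through $v$ rather than $4K_1$. That in turn relies on Claim~\ref{cla-2}, which makes $v$ a common neighbor of all of $X$. Everything else is routine.
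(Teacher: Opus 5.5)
Your proof is correct and follows the paper's argument essentially verbatim. You rule out $X_i$ and $X_{i+2}$ both being nonempty via the bull $\{v_i,x,x',v,v_{i+2}\}$ or the $4K_1$ $\{x,x',v_{i+1},v_{i+3}\}$, then make $X_j$ complete to $X_{j+1}$ via the $4K_1$ $\{x,x',v_{j+2},v_{j+4}\}$; your explicit pigeonhole argument on indices mod $5$, and your remark that Claim~\ref{cla-2} alone gives $v$ complete to $X$, just spell out steps the paper leaves implicit.
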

\begin{proof}
We first show that
 either $X_i=\emptyset$ or $X_{i+2}=\emptyset$. Suppose to the contrary that $X_i\neq \emptyset$ and $X_{i+2}\neq \emptyset$. Without loss of generality, we may assume that $x_1\in X_1$ and $x_3\in X_3$. By Claims~\ref{cla-2} and \ref{cla-3}, $v$ is complete to $\{x_1,x_3\}$. If $x_1\sim x_3$, then $\{v,x_1,x_3,v_1,v_3\}$ induces a bull; otherwise, $\{x_1,x_3,v_2,v_4\}$ induces a $4K_1$. Therefore, $X=X_j\cup X_{j+1}$ for some $j\in [5]$. Let $x_j\in X_j$ and $x_{j+1}\in X_{j+1}$. If $x_j\not\sim x_{j+1}$, then $\{x_j,x_{j+1},v_{j+2},v_{j+4}\}$ induces a $4K_1$, a contradiction. Hence, $X_j$ is complete to $X_{j+1}$. So, $X$ is a clique.
\end{proof}

\begin{claim}\label{cla-6}
	Either $Y_i=\emptyset$ or $(X\setminus X_{i+1})\cup Y_{i+2}\cup Y_{i+3}\cup (W\setminus W_{i+2})=\emptyset$.
\end{claim}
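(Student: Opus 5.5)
The plan is to fix $i$, assume $Y_i\neq\emptyset$, and show that each of the four sets $X\setminus X_{i+1}$, $Y_{i+2}$, $Y_{i+3}$ and $W\setminus W_{i+2}$ is empty. By symmetry take $i=1$, so that a vertex $y\in Y_1$ has $N_C(y)=\{v_1,v_3\}$. By Claims~\ref{cla-2} and~\ref{cla-3} we know $y\sim v$, $v$ is complete to $X\cup Y$, and $v$ is anticomplete to $V(C)\cup Z\cup W$. Every case then runs on the same two-step pattern.
\begin{enumerate}
\item Use $4K_1$-freeness to force an adjacency: pick two nonadjacent vertices of $C$ that are both non-neighbours of $y$ and of the vertex $u$ under consideration. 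This forces $u\sim y$, or forces $u\sim v$, which is already known.
\item Exhibit a bull. Its triangle is $\{u,y,v\}$ or $\{u,y,v_j\}$ for a common neighbour $v_j\in V(C)$. Its two pendants are a private $C$-neighbour of $u$ and a private $C$-neighbour of $y$, chosen nonadjacent to each other.
\end{enumerate}

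For $u=x\in X_j$ with $j\neq 2$, the cases go as follows.
\begin{itemize}
\item $j=1$: the set $\{x,y,v_2,v_4\}$ forces $x\sim y$. Then the triangle $x y v_1$, with pendant $v_3$ at $y$ and pendant $v_5$ at $v_1$, is a bull.
\item $j=4$: the set $\{x,y,v_2,v_5\}$ forces $x\sim y$. Then $\{v,x,y,v_4,v_1\}$ is a bull.
\item $j=5$: the set $\{x,y,v_2,v_4\}$ forces $x\sim y$. Then $\{v,x,y,v_5,v_3\}$ is a bull.
\item $j=3$: symmetric to $j=1$, via the reflection of $C$ fixing $v_2$.
\end{itemize}
The cases $u\in Y_3$ (neighbours $v_3,v_5$) and $u\in Y_4$ (neighbours $v_4,v_1$) are handled the same way. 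Here $u\sim v$, and a $4K_1$ on $u$, $y$ and two common non-neighbours in $C$ forces $u\sim y$. For $Y_3$ one such pair would be $v_2,v_4$, but I must check that it actually misses $u$. Then the triangle $uyv$, with pendants a private $C$-neighbour of each, gives a bull; for instance $v_5$ and $v_1$ for $Y_3$, which are adjacent, so I would instead choose a different triangle containing the common neighbour $v_3$.

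For $u\in W_j$ with $j\neq 3$ the triangle through $v$ is unavailable, because $u\not\sim v$. Instead, $u$ has four consecutive neighbours on $C$. The step is to compare $u$ with $y$ and $v$:
\begin{itemize}
\item If $u\not\sim y$, look for a $4K_1$ containing $u$, $y$, $v$ and the unique non-neighbour of $u$ on $C$, when that vertex also misses $y$.
\item If $u\sim y$, look for a bull on $y,u,v$ together with two suitable vertices of $C$.
\end{itemize}
For $j=3$ the unique non-neighbour of $u$ is $v_2$, which together with $v$ is exactly the configuration that escapes both arguments. This explains why $W_{i+2}$ is excluded from the statement.

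I expect the main obstacle to be the $W$ case. There the vertex $v$ is not adjacent to $u$, so the bull must be built inside $C\cup\{u,y\}$, using local perfectness (no vertex complete to $C$) and Claim~\ref{cla-4} where needed. I would carry out the $W_j$ subcases one by one, for $j=1,2,4,5$, checking the two alternatives $u\sim y$ and $u\not\sim y$ separately.
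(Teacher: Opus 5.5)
Your $X$ cases are correct and match the paper. Your $4K_1$ on $\{x,y,v_2,v_4\}$ for $X_1$ is exactly the proof of Claim~\ref{cla-4}, which the paper cites at that point. The $Y_3$, $Y_4$ cases go through once you name the bull: for $u\in Y_3$ with $u\sim y$, take the triangle $uyv_3$ with pendants $v_2$ at $v_3$ and $v_5$ at $u$, and get $Y_4$ from the reflection of $C$ fixing $v_2$.

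\textbf{The gap is the $W$ case.} You do not carry it out, and the one concrete step you propose for it fails. The set $\{u,y,v,v_{j-1}\}$ is never a $4K_1$, because $y\sim v$ by Claim~\ref{cla-2}, as you note yourself at the start. Indeed, a vertex $u\in W_j$ misses only $v_{j-1}$ on $C$, so no $4K_1$ containing both $u$ and $y$ exists inside $C\cup\{u,y,v\}$ at all. The branch $u\not\sim y$ must therefore be killed by a bull. Your explanation of why $W_{i+2}$ is excluded rests on this broken step, and it is inconsistent even on its own terms. By your criterion (the non-neighbour of $u$ should also miss $y$), the escaping classes would be $W_2$ and $W_4$, whose non-neighbours $v_1,v_3$ are adjacent to $y$, not $W_3$.

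The missing bulls are as follows.
\begin{itemize}
\item If $u\in W_1$ and $u\not\sim y$: $\{u,v_3,v_4,y,v_5\}$ is a bull (triangle $uv_3v_4$, $y$ pendant at $v_3$, $v_5$ pendant at $v_4$).
\item If $u\in W_2$ and $u\not\sim y$: $\{u,v_2,v_3,y,v_5\}$ is a bull (triangle $uv_2v_3$, $y$ at $v_3$, $v_5$ at $u$).
\item If $u\sim y$: use $v$ as the pendant at $y$, giving $\{u,y,v_1,v,v_5\}$ for $W_1$ and $\{u,y,v_3,v,v_5\}$ for $W_2$.
\end{itemize}
Then $W_5$ and $W_4$ follow by the reflection fixing $v_2$. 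This is precisely the paper's argument, and neither local perfectness nor Claim~\ref{cla-4} is needed.

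For $W_3$, the case $u\sim y$ still yields the bull $\{u,y,v_1,v,v_2\}$ (cf.\ Claim~\ref{cla-7}). The configuration that genuinely survives is $u\not\sim y$, where $C\cup\{u,y,v\}$ contains neither a bull nor a $4K_1$. That is the actual reason $W_{i+2}$ is excluded from the statement.
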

\begin{proof} 
	On the contrary, without loss of generality, let $y\in Y_1$ and $x\in (X\setminus X_{2})\cup Y_{3}\cup Y_{4}\cup (W\setminus W_{3})$. If $x\in X_1$, then $x\sim y$ by Claim~\ref{cla-4}, and now $\{x,y,v_1,v_3,v_5\}$ induces a bull. So $x\not\in X_1$. Similarly, $x\not\in X_3$. If $x\in X_4$, then $\{x,y,v_2,v_5\}$ induces a $4K_1$ whenever $x\not\sim y$, and $\{v,x,y,v_1,v_4\}$ induces a bull whenever $x\sim y$ by Claim~\ref{cla-2}. So $x\not\in X_4$. Similarly, $x\not\in X_5$. Therefore, $x\not\in X\setminus X_2$. 

If $x\in Y_3$, then $\{x,y,v_2,v_4\}$ induces a $4K_1$ whenever $x\not\sim y$, and $\{x,y,v_3,v_2,v_5\}$ induces a bull whenever $x\sim y$. So $x\not\in Y_3$. Similarly, $x\not\in Y_4$. Therefore, $x\in W\setminus W_{3}$, and so $x\in W_1\cup W_2\cup W_4\cup W_5$.

If $x\in W_1$, then $\{x,v_3,v_4,y,v_5\}$ induces a bull whenever $x\not\sim y$, and $\{x,y,v_1,v,v_5\}$ induces a bull whenever $x\sim y$. So $x\not\in W_1$. Similarly, $x\not\in W_5$. If $x\in W_2$, then $\{x,v_2,v_3,y,v_5\}$ induces a bull whenever $x\not\sim y$, and $\{x,y,v_3,v,v_5\}$ induces a bull whenever $x\sim y$ by Claim~\ref{cla-2}. So $x\not\in W_2$. Similarly, $x\not\in W_4$. This proves Claim~\ref{cla-6}. 
\end{proof}
\begin{claim}\label{cla-7}
	$Y_i$ is anticomplete to $Y_{i+1}\cup Z_{i+1}\cup Z_{i+3}\cup Z_{i+4}\cup W_{i+2}$.
\end{claim}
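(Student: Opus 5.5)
We take $i=1$ by symmetry and fix $y\in Y_1$, so $N_C(y)=\{v_1,v_3\}$. By Claims~\ref{cla-2} and~\ref{cla-3}, $y\sim v$, and $v$ is anticomplete to $V(C)$. Suppose $y$ has a neighbor $u$ in $Y_2\cup Z_2\cup Z_4\cup Z_5\cup W_3$. In every case we will exhibit an induced bull. Its triangle is formed by $y$, $u$ and a third vertex, and its two pendant vertices are taken from $\{v\}\cup V(C)$. We check adjacencies only against the known neighborhoods $N_C(\cdot)$ and against $v$.

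\emph{Case $u\in Z_2\cup Z_4\cup Z_5\cup W_3$.} By Claim~\ref{cla-2}, $N(M)=X\cup Y$, so $u\not\sim v$. Moreover $N_C(u)$ meets $\{v_1,v_3\}$, so $y$ and $u$ have a common neighbor $c\in\{v_1,v_3\}$. We choose $c$ together with a vertex $v_j\in N_C(u)$ that is nonadjacent to both $c$ and $y$. The set $\{v,y,u,c,v_j\}$ then induces a bull: the triangle is $yuc$, $v$ is a pendant at $y$, and $v_j$ is a pendant at $u$. The concrete choices are as follows.
\begin{itemize}
\item For $u\in Z_2$ we take $c=v_1$ and $v_j=v_5$.
\item For $u\in Z_4$ we take $c=v_3$ and $v_j=v_5$.
\item For $u\in Z_5$ we take $c=v_1$ and $v_j=v_4$.
\item For $u\in W_3$ we take $c=v_1$ and $v_j=v_4$.
\end{itemize}
In each case $v_j\not\sim v$ holds because $v\in M$.

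\emph{Case $u\in Y_2$.} Here $N_C(u)=\{v_2,v_4\}$ and $u\sim v$, so the triangle is $vyu$. We take $v_1$ as a pendant at $y$ and $v_4$ as a pendant at $u$. These two vertices are nonadjacent, neither is adjacent to $v$, $v_1\not\sim u$, and $v_4\not\sim y$. Hence $\{v,y,u,v_1,v_4\}$ induces a bull.

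Each case contradicts bull-freeness, which proves the claim. The only real work is choosing, in each case, the triangle and the two pendants. The one point to watch is that the pendant at $u$ must avoid the common $C$-neighbor $c$; for instance, $v_2$ cannot serve for $Z_2$ because $v_2\sim v_1$.
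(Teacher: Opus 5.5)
Your overall strategy matches the paper's: assume $y\in Y_1$ has a neighbor $u$ in the forbidden set, and find an induced bull on $y$, $u$, the vertex $v$ and two cycle vertices. Your $Y_2$ case is exactly the paper's. Your $Z_4$, $Z_5$ and $W_3$ cases are also correct; there you hang the second pendant on $u$, whereas the paper hangs it on the common cycle neighbor $v_3$, using $v_2$.

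The $Z_2$ case, however, is wrongly justified. For $u\in Z_2$ we have $N_C(u)=\{v_1,v_2,v_3\}$, so $v_5\notin N_C(u)$, and $v_5$ is not a pendant at $u$ as you claim. In fact your template cannot be realized for $Z_2$ at all. For either choice $c\in\{v_1,v_3\}$, every vertex of $N_C(u)\setminus\{c\}$ is adjacent to $c$ or to $y$. Your closing remark about $v_2$ runs into this obstruction without resolving it.

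The fix is easy, and the vertex set you wrote down still works. The edges of $\{v,y,u,v_1,v_5\}$ are exactly $vy$, $yu$, $yv_1$, $uv_1$ and $v_1v_5$. So it is a bull with triangle $yuv_1$, pendant $v$ at $y$, and pendant $v_5$ at $v_1$ (not at $u$). This is precisely the paper's pattern for $Z_2$, which uses $\{u,y,v_3,v,v_4\}$ with pendant $v_4$ attached at $v_3$. Restate the $Z_2$ case with the second pendant attached to the common neighbor $c$ rather than to $u$. With that correction the proof is complete.
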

\begin{proof}
On the contrary, without loss of generality, let $y\in Y_1$ and $x\in Y_2\cup Z_2\cup Z_4\cup Z_5\cup W_3$ such that $x\sim y$. If $x\in Y_2$, then $\{v,x,y,v_1,v_4\}$ induces a bull by Claim~\ref{cla-2}. If $x\in Z_2$, then $\{x,y,v_3,v,v_4\}$ induces a bull by Claim~\ref{cla-2}. If $x\in Z_4\cup W_3$, then $\{x,y,v_3,v,v_2\}$ induces a bull by Claim~\ref{cla-2}. Similarly, we can obtain a contradiction if $x\in Z_5$. This proves Claim~\ref{cla-7}. 
\end{proof}
\begin{claim}\label{cla-8}
	$Y_i\cup Z_{i+1}$ is complete to $Z_{i}$, and $Y_i$ is complete to $Z_{i+2}$.
\end{claim}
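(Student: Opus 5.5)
The plan is a direct case check. By the symmetry of the labelling modulo $5$, it is enough to take $i=1$. Recall that $N_C(y)=\{v_1,v_3\}$ for $y\in Y_1$, $N_C(z)=\{v_5,v_1,v_2\}$ for $z\in Z_1$, $N_C(z)=\{v_1,v_2,v_3\}$ for $z\in Z_2$, and $N_C(z)=\{v_2,v_3,v_4\}$ for $z\in Z_3$. For each of the three required adjacencies, I will suppose that some pair of vertices is nonadjacent and exhibit either an induced $4K_1$ or an induced bull. For this I will use Claims~\ref{cla-2} and~\ref{cla-3}: the unique vertex $v$ of $M$ is complete to $X\cup Y$ and anticomplete to $Z\cup W$.

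\emph{$Z_2$ is complete to $Z_1$.} Suppose $z_1\in Z_1$ and $z_2\in Z_2$ with $z_1\not\sim z_2$. Then $v_4$ is adjacent to neither of them, and $v$ is adjacent to neither of them (since $v$ is anticomplete to $Z$). Also $v\not\sim v_4$ because $v\in M$. Hence $\{z_1,z_2,v_4,v\}$ induces a $4K_1$, a contradiction.

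\emph{$Y_1$ is complete to $Z_1$.} Suppose $y\in Y_1$ and $z\in Z_1$ with $y\not\sim z$. A $4K_1$ is not available here because $y\sim v$, so I use a bull instead. I claim $\{v_1,v_5,z,y,v_4\}$ induces a bull:
\begin{itemize}
\item the triangle is $v_1v_5z$;
\item $y$ is a pendant at $v_1$, since $y\sim v_1$, $y\not\sim v_5$ and $y\not\sim z$;
\item $v_4$ is a pendant at $v_5$, since $v_4\not\sim v_1$ and $v_4\not\sim z$;
\item the pendants are nonadjacent, since $y\not\sim v_4$.
\end{itemize}
This is a contradiction.

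\emph{$Y_1$ is complete to $Z_3$.} This is the same pattern shifted along the hole. Suppose $y\in Y_1$ and $z\in Z_3$ with $y\not\sim z$. I claim $\{v_3,v_4,z,y,v_5\}$ induces a bull:
\begin{itemize}
\item the triangle is $v_3v_4z$;
\item $y$ is a pendant at $v_3$, since $y\not\sim v_4$ and $y\not\sim z$;
\item $v_5$ is a pendant at $v_4$, since $v_5\not\sim v_3$ and $v_5\not\sim z$;
\item the pendants are nonadjacent, since $y\not\sim v_5$.
\end{itemize}
This is a contradiction.

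There is no real obstacle in this argument. The only point needing care is to pick the right five vertices in each case and to verify every non-edge against the definitions of $Y_i$ and $Z_i$ and against Claim~\ref{cla-2}. In the $Z_1$--$Z_2$ case, a bull is awkward to find because both vertices see $v_1$ and $v_2$, which is why I use the $4K_1$ with $v$ there instead.
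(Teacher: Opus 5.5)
Your proof is correct and is essentially the paper's. The paper uses exactly your bull $\{z,v_1,v_5,y,v_4\}$, and your second bull $\{z,v_3,v_4,y,v_5\}$ is its image under the reflection of the hole fixing $v_2$, which is how the paper obtains ``$Y_i$ complete to $Z_{i+2}$'' by symmetry. Your $4K_1$ on $\{z_1,z_2,v_4,v\}$ for the $Z_2$--$Z_1$ case is also valid.

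Your closing remark is mistaken, though. A vertex of $Z_2$ meets $\{v_1,v_4,v_5\}$ exactly as a vertex of $Y_1$ does, namely only in $v_1$. So the same bull $\{z_1,v_1,v_5,z_2,v_4\}$ works in that case too, and this single bull is how the paper handles $Y_1\cup Z_2$ uniformly.
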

\begin{proof}
	 By symmetry, it suffices to prove that $Y_i\cup Z_{i+1}$ is complete to $Z_{i}$.
On the contrary, without loss of generality, let $y\in Y_1\cup Z_2$ and $x\in Z_1$ with $x\not\sim y$. Then $\{x,v_1,v_5,y,v_4\}$ induces a bull. This proves Claim~\ref{cla-8}. 
\end{proof}

By Claims~\ref{cla-2} and \ref{cla-5}, $N(M)=X\cup Y$ and $X$ is a clique. By Lemma~\ref{cut}, we have that $Y\ne\emptyset$ as otherwise $X$ is a clique cutset of $G$. Without loss of generality, we may assume $Y_1\ne\emptyset$. By Claim~\ref{cla-6}, we have that $Y_3\cup Y_4=\emptyset$ and either $Y_2=\emptyset$ or $Y_5=\emptyset$. Without loss of generality, we may assume that $Y=Y_1\cup Y_2$. By Claim~\ref{cla-6} again, $X=X_2$ and $W=W_3$. Since $Y_1\ne\emptyset$, we may assume that $y_1\in Y_1$. By Claim~\ref{cla-3}, we have that 
\begin{equation}\label{e-11}
	\mbox{$V(G)=V(C)\cup X_2\cup Y_1\cup Y_2\cup Z\cup W_3\cup \{v\}$ and $y_1\in Y_1$. }
\end{equation}

\begin{claim}\label{cla-9}
	$G[V(C)\cup Z]$ is a clique blowup of 5-hole.
\end{claim}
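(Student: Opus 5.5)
To prove Claim~\ref{cla-9}, I will show that the sets $B_i=\{v_i\}\cup Z_i$ for $i\in[5]$ are cliques, that $B_i$ is complete to $B_{i\pm1}$, and that $B_i$ is anticomplete to $B_{i\pm2}$. Then $G[V(C)\cup Z]$ is exactly the clique blowup of $C$ obtained by blowing up each $v_i$ into $B_i$.

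\textbf{Each $B_i$ is a clique.} Every $z\in Z_i$ has $N_C(z)=\{v_{i-1},v_i,v_{i+1}\}$, so $z\sim v_i$. Moreover, $Z_i$ is a clique, as observed just before Claim~\ref{cla-4}. Concretely, two nonadjacent $z,z'\in Z_i$ would give the $4K_1$ $\{z,z',v,v_{i+2}\}$, using that $v\in M$ is anticomplete to $Z$ by Claim~\ref{cla-2}. So $B_i$ is a clique.

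\textbf{Adjacent blobs are complete.} Vertex $v_i$ is adjacent to $v_{i+1}$, and $v_i$ is complete to $Z_{i+1}$ by the definition of $Z_{i+1}$. Likewise $v_{i+1}$ is complete to $Z_i$. Finally, $Z_{i+1}$ is complete to $Z_i$ by Claim~\ref{cla-8}. Hence $B_i$ is complete to $B_{i+1}$.

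\textbf{Non-adjacent blobs are anticomplete.} By the definitions, $v_i$ is anticomplete to $\{v_{i+2}\}\cup Z_{i+2}$, since $N_C(z)=\{v_{i+1},v_{i+2},v_{i+3}\}$ for $z\in Z_{i+2}$. Similarly $v_{i+2}$ is anticomplete to $Z_i$. It remains to show that $Z_i$ is anticomplete to $Z_{i+2}$. This is the only step that needs a new argument.

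Suppose $z\in Z_1$ and $z'\in Z_3$ are adjacent. Then $N_C(z)=\{v_5,v_1,v_2\}$ and $N_C(z')=\{v_2,v_3,v_4\}$, so $z,z',v_2$ form a triangle. Consider $\{z,z',v_2,v_5,v_4\}$:
\begin{itemize}
\item $v_5\sim z$, but $v_5\not\sim z'$ and $v_5\not\sim v_2$;
\item $v_4\sim z'$, but $v_4\not\sim z$ and $v_4\not\sim v_2$;
\item $v_4\sim v_5$.
\end{itemize}
The extra edge $v_4v_5$ means this set induces a house, not a bull, so I switch to a vertex set that avoids it. Take $\{z,z',v_2,v_1,v_3\}$ instead: $v_1\sim z,v_2$, and $v_3\sim z',v_2$, so this fails too.

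The paper's own extra vertex is better. Use $v$, which by Claim~\ref{cla-2} is anticomplete to $Z\cup V(C)$. Since $G$ is $4K_1$-free, consider $\{z,v_4,v\}$ together with one more vertex; $v_3$ does not work because $v_3\sim v_4$. A cleaner choice is the bull $\{v_5,z,z',v_4,\cdot\}$, but this is getting complicated. The most direct route is to use local perfection, as follows.

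Let $z\in Z_1$ and $z'\in Z_3$ be adjacent. In $N(v_2)$ we have the vertices $v_1,v_3,z,z'$ with $z\sim v_1$, $z\sim z'$, $z'\sim v_3$, and $v_1\not\sim v_3$, $v_1\not\sim z'$, $z\not\sim v_3$. So $v_1zz'v_3$ is an induced $P_4$ in $N(v_2)$, which is not yet a contradiction.

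Instead, take the bull $\{z,z',v_2,v_5,v_3\}$:
\begin{itemize}
\item the triangle is $z,z',v_2$;
\item $v_5$ is adjacent to $z$ only, since $v_5\not\sim z'$ and $v_5\not\sim v_2$;
\item $v_3$ is adjacent to both $z'$ and $v_2$.
\end{itemize}
This fails, because a pendant vertex of a bull must have exactly one neighbour in the triangle.

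Use $\{z,z',v_1,v_5,v_4\}$ instead. The adjacencies are $z\sim z'$, $z\sim v_1$, $z\sim v_5$, $v_1\sim v_5$, $v_4\sim v_5$, $v_4\sim z'$, while $z'\not\sim v_1$ and $z'\not\sim v_5$. This induces a house $z'$–$z$–$v_5$–$v_4$–$z'$ with roof $v_1$. So it gives no contradiction in a general bull-free graph.

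I therefore expect the anticompleteness of $Z_i$ and $Z_{i+2}$ to be the main obstacle. I would try to derive it from the absence of homogeneous sets (Lemma~\ref{homogeneous}) together with Claims~\ref{cla-7} and~\ref{cla-8} and the presence of $y_1\in Y_1$. For example, when $i=1$, Claim~\ref{cla-7} says $y_1$ is anticomplete to $Z_4\cup Z_5$. Failing that, I would do a case analysis of $4K_1$-subsets involving $v$ and the vertices of $C$.
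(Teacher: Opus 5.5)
\textbf{Verdict: there is a genuine gap.} Your first two steps are correct and match the paper. $\{v_i\}\cup Z_i$ is a clique; your $4K_1$ $\{z,z',v,v_{i+2}\}$ is what the paper means by ``by Claim~\ref{cla-2}''. Consecutive blobs are complete by the definitions and Claim~\ref{cla-8}. The non-adjacency of $v_i$ to $Z_{i\pm2}$ is read off from $N_C$. However, the one step with real content is never proved: that $Z_i$ is anticomplete to $Z_{i+2}$. Every five-vertex set you try gives a house or nothing, and the proposal ends with a plan rather than an argument.

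Your search could not have succeeded as set up. Suppose $z_1\in Z_1$ and $z_3\in Z_3$ are adjacent. Then $G[V(C)\cup\{z_1,z_3,v\}]$ is itself bull-free, $4K_1$-free and locally perfect. Its only triangles are $v_1v_2z_1$, $v_1v_5z_1$, $v_2v_3z_3$, $v_3v_4z_3$ and $z_1z_3v_2$, and one checks that none of them extends to a bull. So no contradiction can come from $V(C)\cup Z\cup\{v\}$ alone; you need a vertex outside this set.

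\textbf{The missing ingredient} is the vertex $y_1\in Y_1$ fixed in (\ref{e-11}). It exists because $X$ is not a clique cutset (Lemma~\ref{cut}). Its relevant adjacencies are:
\begin{itemize}
\item by Claim~\ref{cla-2}, $y_1\sim v$, while $v$ is anticomplete to $V(C)\cup Z$;
\item by Claims~\ref{cla-7} and~\ref{cla-8}, $y_1$ is complete to $Z_1\cup Z_3$ and anticomplete to $Z_2\cup Z_4\cup Z_5$.
\end{itemize}

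Because $y_1$ breaks the rotational symmetry, you cannot argue for a generic pair $(Z_i,Z_{i+2})$. Instead, treat the five non-consecutive pairs up to the reflection $v_1\leftrightarrow v_3$, $v_4\leftrightarrow v_5$, which fixes $Y_1$. This leaves three cases:
\begin{itemize}
\item An edge $z_1z_3$ gives the bull $\{z_1,y_1,z_3,v,v_4\}$: triangle $z_1y_1z_3$, with pendants $v$ at $y_1$ and $v_4$ at $z_3$.
\item An edge $z_1z_4$ gives the bull $\{v_1,y_1,z_1,v,z_4\}$: triangle $v_1y_1z_1$, with pendants $v$ at $y_1$ and $z_4$ at $z_1$. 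The pair $Z_3,Z_5$ follows by the reflection.
\item An edge $z_2z_4$ gives the bull $\{v_2,v_1,z_2,y_1,z_4\}$: triangle $v_1v_2z_2$, with pendants $y_1$ at $v_1$ and $z_4$ at $z_2$. The pair $Z_2,Z_5$ follows by the reflection.
\end{itemize}
This is the paper's argument. No appeal to homogeneous sets is needed.
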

\begin{proof}
By Claims~\ref{cla-4} and \ref{cla-8}, for $i\in [5]$, $Z_i$ is complete to $Z_{i+1}$ and $Z_i\cup \{v_{i}\}$ is a clique.

 Suppose that there is an edge $z_1z_3$ such that $z_i\in Z_i$ for $i\in\{1,3\}$. By Claim \ref{cla-2}, $v\sim y_1$ and $v$ is anticomplete to $\{z_1,z_3\}$. By Claim \ref{cla-8}, $y_1$ is complete to $\{z_1,z_3\}$.  Then $\{z_1,y_1,z_3,v,v_4\}$ induces a bull, a contradiction. So $Z_1$ is anticomplete to $Z_3$.
 
 Suppose that there is an edge $z_1z_4$ such that $z_i\in Z_i$ for $i\in\{1,4\}$. By Claim \ref{cla-2}, $v\sim y_1$ and $v$ is anticomplete to $\{z_1,z_4\}$. By Claims \ref{cla-7} and \ref{cla-8}, $y_1\sim z_1$ and $y_1\not\sim z_4$. Then $\{v_1,y_1,z_1,v,z_4\}$ induces a bull, a contradiction. So $Z_1$ is anticomplete to $Z_4$. Similarly, we have that $Z_3$ is anticomplete to $Z_5$. 
 
 Suppose that there is an edge $z_2z_4$ such that $z_i\in Z_i$ for $i\in\{2,4\}$. By Claim~\ref{cla-7}, $y_1$ is anticomplete to $\{z_2,z_4\}$. Then $\{v_2,v_1,z_2,y_1,z_4\}$ induces a bull, a contradiction. So $Z_2$ is anticomplete to $Z_4$. Similarly, we have that $Z_2$ is anticomplete to $Z_5$. This proves Claim~\ref{cla-9}. \end{proof}

 Suppose that $Y_2\ne\emptyset$. By Claim~\ref{cla-6}, $X_2\cup W_3=\emptyset$, and so $V(G)=V(C)\cup Y_1\cup Y_2\cup Z\cup \{v\}$ by (\ref{e-11}). By Claims~\ref{cla-7}, \ref{cla-8}, and \ref{cla-9}, $G$ is a clique blowup of $F$ (See Figure~\ref{fig-2}). Since every MNWD graph has no homogeneous set by Lemma~\ref{homogeneous}, $G$ is isomorphic to $F$. Then $\chi(G)\leq3$ and $G$ is triangle-free, which implies that $G$ is perfectly weight divisible by Lemma~\ref{3-coloring}, a contradiction. Therefore, $Y_2=\emptyset$. Now $V(G)=V(C)\cup X_2\cup Y_1\cup Z\cup W_3\cup \{v\}$ by (\ref{e-11}).
 
 	\begin{figure}[htbp]
 	\begin{center}
 		\includegraphics[width=15cm]{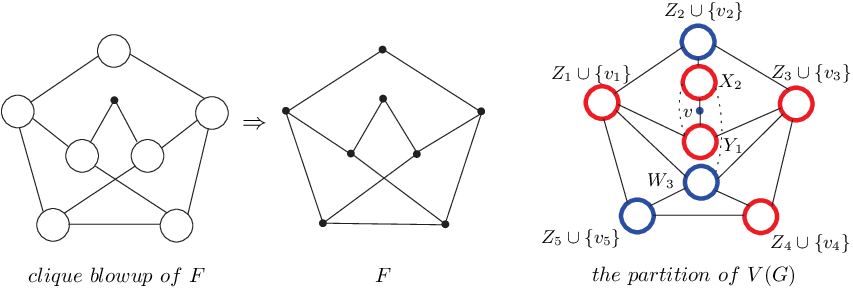}
 	\end{center}
 	\vskip -25pt
 	\caption{Illustration of $F$, and the partition of $V(G)$: the red sets belong to $A$ and the blue sets belong to $B$.}
 	\label{fig-2}
 \end{figure}
 
 \begin{claim}\label{cla-10}
 	$X_2$ is anticomplete to $Z\setminus Z_2$ and complete to $Z_2$.
 \end{claim}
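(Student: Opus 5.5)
The plan is to fix $x\in X_2$ and $z\in Z_i$ for each $i\in[5]$ in turn, and to derive a contradiction from the wrong adjacency by exhibiting an induced bull. Only the definitions of the sets $X_i,Z_i$ are needed, together with Claim~\ref{cla-2} and Claim~\ref{cla-3}. From the definitions, $N_C(x)=\{v_2\}$ and $N_C(z)=\{v_{i-1},v_i,v_{i+1}\}$. By Claims~\ref{cla-2} and~\ref{cla-3} we have $v\sim x$ and $v\not\sim z$, since $N(M)=X\cup Y$. In each case the bull uses either a triangle of $C\cup\{z\}$ or the triangle $\{x,z,v_2\}$.

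\emph{Completeness to $Z_2$.} Suppose $x\not\sim z$ with $z\in Z_2$, so $N_C(z)=\{v_1,v_2,v_3\}$. Take the triangle $\{v_1,v_2,z\}$, attach the pendant $x$ at $v_2$, and attach the pendant $v_5$ at $v_1$. The required non-adjacencies all hold: $x\not\sim v_1$, $x\not\sim v_5$, $x\not\sim z$, $v_5\not\sim v_2$, and $v_5\not\sim z$. Hence $\{v_1,v_2,z,x,v_5\}$ induces a bull.

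\emph{Anticompleteness to $Z_1\cup Z_3$.} Suppose $x\sim z$ with $z\in Z_1$. Then $\{x,z,v_2\}$ is a triangle. The vertex $v$ is a pendant at $x$, because $v\not\sim z$ and $v\not\sim v_2$. The vertex $v_3$ is a pendant at $v_2$, because $v_3\not\sim x$, $v_3\not\sim z$ and $v_3\not\sim v$. So $\{x,z,v_2,v,v_3\}$ induces a bull. If instead $z\in Z_3$, the same argument works with $v_1$ in place of $v_3$.

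\emph{Anticompleteness to $Z_4\cup Z_5$.} Suppose $x\sim z$ with $z\in Z_4$, so $N_C(z)=\{v_3,v_4,v_5\}$. Take the triangle $\{z,v_4,v_5\}$ with pendant $x$ at $z$ and pendant $v_1$ at $v_5$. This is induced because $x$ has no neighbour in $\{v_4,v_5,v_1\}$, $v_1\not\sim z$, and $v_1\not\sim v_4$. If $z\in Z_5$, use the triangle $\{z,v_4,v_5\}$ with pendant $x$ at $z$ and pendant $v_3$ at $v_4$.

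There is no real obstacle here. The only care needed is in the $Z_4$ and $Z_5$ cases: the natural attempt through $v_2,v_3$ only produces a $4$-cycle, so one has to use a triangle inside $N(z)\cup\{z\}$ that avoids $v_2$. Once the adjacencies of $v$ from Claim~\ref{cla-2} are recorded, each bull is checked directly.
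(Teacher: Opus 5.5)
Your proof is correct and takes essentially the same route as the paper: you split over the sets $Z_i$ and exhibit an induced bull for each forbidden adjacency. Your $Z_4$ bull is identical to the paper's, and your $Z_2$ and $Z_5$ bulls are reflections of its bulls. The only real difference is in the $Z_1\cup Z_3$ case, where you use $v$ as a pendant at $x$ and therefore need Claim~\ref{cla-2}; the paper instead takes $\{x,z,v_2,v_1,v_4\}$ for $z\in Z_3$, which uses only vertices of $C$ and needs nothing beyond the definitions of the sets.
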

\begin{proof}
 Let $x\in X_2$ and $z\in Z_3\cup Z_4$ such that $x\sim z$. If  $z\in Z_3$, then $\{x,z,v_2,v_1,v_4\}$ induces a bull.  If $z\in Z_4$, then $\{z,v_4,v_5,x,v_1\}$ induces a bull. Both are contradictions. So $X_2$ is anticomplete to $Z_3\cup Z_4$. Similarly, $X_2$ is anticomplete to $Z_1\cup Z_5$. This implies $X_2$ is anticomplete to $Z\setminus Z_2$. 
 
 Let $x'\in X_2$ and $z'\in Z_2$. If $x'\not\sim z'$. Then $\{z',v_2,v_3,x',v_4\}$ induces a bull, a contradiction. This implies $X_2$ is complete to $Z_2$ and proves Claim~\ref{cla-10}. 
 \end{proof}
 
\begin{claim}\label{cla-11}
 $W_3$ is anticomplete to $Z_2$ and complete to $Z_1\cup Z_3$.
\end{claim}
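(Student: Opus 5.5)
The plan is to argue each of the three assertions by contradiction. Each time, we exhibit an induced bull, or a hole inside some $G[N(u)]$; the latter contradicts local perfectness from Lemma~\ref{MNWD}. We fix $w\in W_3$, so $N_C(w)=\{v_3,v_4,v_5,v_1\}$, and we use the vertex $y_1\in Y_1$ from (\ref{e-11}). Its adjacencies come from earlier claims:
\begin{itemize}
\item Claim~\ref{cla-7} (with $i=1$) gives that $y_1$ is anticomplete to $Z_2\cup W_3$.
\item Claim~\ref{cla-8} gives that $y_1$ is complete to $Z_1\cup Z_3$.
\item Claims~\ref{cla-2} and \ref{cla-3} give that $v\sim y_1$, while $v$ is anticomplete to $V(C)\cup Z\cup W_3$.
\end{itemize}

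\emph{$W_3$ is anticomplete to $Z_2$.} Suppose $w\sim z$ with $z\in Z_2$, so $N_C(z)=\{v_1,v_2,v_3\}$. Then $z,v_1,v_5,v_4,v_3$ all lie in $N(w)$. In this set, $z$ is adjacent to $v_1$ and $v_3$ and not to $v_4,v_5$. Hence $z v_1 v_5 v_4 v_3 z$ is an induced $5$-hole in $G[N(w)]$. This contradicts that $G$ is locally perfect, and settles the first assertion.

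\emph{$W_3$ is complete to $Z_1$ and to $Z_3$.} By the reflection $v_i\mapsto v_{4-i}$ (indices mod $5$), which fixes $v_2$ and swaps $Y_1$ with itself and $Z_1$ with $Z_3$, it suffices to handle $Z_1$. Suppose $w\not\sim z$ with $z\in Z_1$, so $N_C(z)=\{v_5,v_1,v_2\}$. The first observation is that $z v_2 v_3 w v_5 z$ is then an induced $5$-hole $C'$. The vertex $y_1$ has exactly the neighbours $z$ and $v_3$ on $C'$, and $y_1$ is adjacent to $v$, which is anticomplete to $V(C')$. Lemma~\ref{L-11} is consistent with this, since $\{z,v_3\}$ is stable. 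So the contradiction must come from the further vertices $v_1$ and $v_4$. Here $v_1$ is adjacent to $z,v_2,w,v_5$ and to $y_1$, and $v_4$ is adjacent to $v_3,w,v_5$.

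I would redo the classification of Claim~\ref{cla-1} relative to $C'$ and place $v_1,v_4,y_1$ in it. I would then use the bull-freeness checks from the proofs of Claims~\ref{cla-6}--\ref{cla-8}, now applied to $C'$ together with $v$, where $v$ is a vertex with $M(V(C'))\ni v$. Alternatively, I would look for a $4K_1$ among $\{z,w,v,\cdot\}$: the vertices $z,w,v$ are pairwise nonadjacent, so any fourth common non-neighbour finishes the argument. Candidates are vertices of $X_2$, which Claim~\ref{cla-10} makes anticomplete to $Z_1$, or the non-neighbours $v_3$ of $z$ and $v_2$ of $w$ paired suitably.

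The main obstacle I expect is this last case, finding the concrete forbidden configuration when $w\not\sim z\in Z_1$. The first thing I would try is the $4K_1$ on $\{z,w,v,u\}$ for a suitable $u$. Failing that, I would fall back on a bull with triangle $\{y_1,z,v_1\}$ and pendant $v$ at $y_1$, then search over the remaining vertices for the second pendant.
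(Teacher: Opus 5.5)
Your proof of the first assertion is correct but differs from the paper's. The paper uses Claim~\ref{cla-7} to exhibit the bull $\{z,w,v_3,v_5,y_1\}$. Your induced $5$-hole $zv_1v_5v_4v_3z$ inside $G[N(w)]$ contradicts local perfectness directly and does not need $y_1$ at all.

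The gap is in the second assertion. For $w\in W_3$ and $z\in Z_1$ with $w\not\sim z$, you never reach a contradiction; you only list places to search. The one concrete candidate you name does not work. A $4K_1$ of the form $\{z,w,v,u\}$ with $u\in X_2$ is impossible, because Claim~\ref{cla-2} makes $X_2$ complete to $v$ (and $X_2$ may be empty anyway). Pairing with $v_3$ or $v_2$ fails as well, since $v_3\sim w$ and $v_2\sim z$.

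The missing observation is that the second pendant in your fallback bull is $w$ itself. Using only the adjacencies you already listed, $\{v_1,z,y_1,w,v\}$ induces a bull:
\begin{itemize}
\item $v_1zy_1$ is a triangle, with $y_1\sim z$ by Claim~\ref{cla-8}.
\item $w$ is adjacent to $v_1$, but not to $z$ (by assumption), not to $y_1$ (by Claim~\ref{cla-7}), and not to $v$.
\item $v$ is adjacent to $y_1$ but anticomplete to $\{v_1,z,w\}$, by Claims~\ref{cla-2} and~\ref{cla-3}.
\end{itemize}
This is exactly the paper's argument, and your reflection $v_i\mapsto v_{4-i}$ then correctly covers $Z_3$.

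The auxiliary hole $C'=zv_2v_3wv_5z$ is unnecessary. Had you carried out your first plan of reclassifying relative to $C'$, you would have found that $y_1$ sees only the non-consecutive pair $\{z,v_3\}$ of $C'$, while $v_1$ sees every vertex of $C'$ except $v_3$. The corresponding case in the proof of Claim~\ref{cla-6} then produces this same bull. As written, however, that step was left as a search rather than executed.
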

\begin{proof}
 Let $t\in W_3$ and $z\in Z_2$. Suppose that $t\sim z$. By Claim \ref{cla-7}, $y_1$ is anticomplete to $\{t,z\}$.
 Then $\{z,t,v_3,v_5,y_1\}$ induces a bull, a contradiction. So $W_3$ is anticomplete to $Z_2$.
 
 Suppose that $W_3$ is not complete to $Z_1\cup Z_3$. Without loss of generality, let $t'\in W_3$ and $z'\in Z_1$ such that $t'\not\sim z'$. However, $\{v_1,t',z',v,y_1\}$ induces a bull by Claims \ref{cla-7} and \ref{cla-8}, a contradiction. This proves Claim~\ref{cla-11}.
\end{proof}

Let $w$ be any positive integer weight function on $V(G)$, and let\\[-24pt]
\begin{itemize}
	\item $A=X_2\cup Y_1\cup Z_1\cup Z_3\cup Z_4\cup \{v_1,v_3,v_4\}$;\\[-24pt]
	\item $B=Z_2\cup Z_5\cup W_3\cup \{v,v_2,v_5\}$.
\end{itemize}
By (\ref{e-11}), $(A,B)$ is a partition of $V(G)$ (See Figure~\ref{fig-2}). We next prove the following two claims.

\begin{claim}\label{cla-12}
	$G[A]$ is perfect.
\end{claim}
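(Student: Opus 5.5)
The plan is to determine the adjacency structure of $G[A]$ exactly. It should turn out to be four cliques glued along a single clique cutset, and then Theorem~\ref{perfect} finishes the job.

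\textbf{Step 1: adjacencies inside $A$.} Recall $A=X_2\cup Y_1\cup Z_1\cup Z_3\cup Z_4\cup\{v_1,v_3,v_4\}$. From the definitions, $N_C(x)=\{v_2\}$ for $x\in X_2$, $N_C(y)=\{v_1,v_3\}$ for $y\in Y_1$, $N_C(z)=\{v_5,v_1,v_2\}$ for $z\in Z_1$, $N_C(z)=\{v_2,v_3,v_4\}$ for $z\in Z_3$, and $N_C(z)=\{v_3,v_4,v_5\}$ for $z\in Z_4$. The other facts needed are:
\begin{itemize}
\item $X_2$ and $Y_1$ are cliques (Claim~\ref{cla-4}).
\item By Claim~\ref{cla-9}, $K_1:=Z_1\cup\{v_1\}$ and $K_3:=Z_3\cup Z_4\cup\{v_3,v_4\}$ are cliques, and $K_1$ is anticomplete to $K_3$. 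Here $Z_3$ is complete to $Z_4$, $Z_i\cup\{v_i\}$ is a clique, $v_3\sim Z_4$, $v_4\sim Z_3$, and $v_1\not\sim v_3,v_4$.
\item By Claim~\ref{cla-10}, $X_2$ is anticomplete to $Z_1\cup Z_3\cup Z_4$. It is also anticomplete to $\{v_1,v_3,v_4\}$, so $X_2$ is anticomplete to $K_1\cup K_3$.
\item By Claim~\ref{cla-8}, $Y_1$ is complete to $Z_1$ and to $Z_3$. By Claim~\ref{cla-7}, $Y_1$ is anticomplete to $Z_4$. Together with $N_C(Y_1)=\{v_1,v_3\}$, this makes $Y_1$ complete to $K_1\cup Z_3\cup\{v_3\}$ and anticomplete to $Z_4\cup\{v_4\}$.
\end{itemize}
The adjacency between $X_2$ and $Y_1$ is left unspecified; it is the only unknown.

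\textbf{Step 2: a clique cutset.} By Step 1, $G[A]-Y_1$ is the disjoint union of the cliques $X_2$, $K_1$ and $K_3$, with no edges between them. Hence, when $Y_1$ separates at least two nonempty pieces, it is a clique cutset of $G[A]$. Each of $G[X_2\cup Y_1]$, $G[K_1\cup Y_1]$ and $G[K_3\cup Y_1]$ has a vertex set covered by two cliques. Its complement is therefore bipartite, so it contains neither an odd hole nor an odd antihole (the complement of an odd antihole is an odd hole, which is not bipartite). By Theorem~\ref{perfect}, each of the three pieces is perfect.

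\textbf{Step 3: gluing.} It remains to show that $G[A]$ contains no odd hole and no odd antihole $H$, which Theorem~\ref{perfect} then turns into perfection of $G[A]$.

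Suppose first that $H$ is an odd hole. Then $H$ lies in one piece, because an induced cycle cannot use vertices of two different components of $G[A]-Y_1$ while passing through a clique cutset: it would have to cross the clique twice, creating a chord. But each piece is perfect, so this is impossible.

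Suppose next that $H$ is an odd antihole with at least $7$ vertices (an antihole on $5$ vertices is a $5$-hole, already covered). Then $H$ has no clique cutset, since the complement of an antihole is connected and every vertex of $H$ has only two non-neighbours. So $H$ cannot meet two different components of $G[A]-Y_1$; were it to, two vertices $a,b\in V(H)$ from different components would be nonadjacent with all their common neighbours in $Y_1$. Hence $V(H)$ lies in one piece, which is a contradiction as before.

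\textbf{Main obstacle.} The place needing care is this gluing argument: verifying cleanly that neither an odd hole nor an odd antihole can straddle the clique cutset $Y_1$. I would handle it directly, in the style of the proof of Lemma~\ref{cut}. An odd antihole $H$ meeting two components, say in vertices $a$ and $b$, has all its remaining vertices in $Y_1\cup\{$the components$\}$. Then $a$ and $b$ each have only two non-neighbours in $H$, while any vertex of $H$ outside $Y_1$ is nonadjacent to everything in the other components. This forces $|V(H)\cap Y_1|\ge |V(H)|-2$ together with a clique of size $|V(H)|-2$ in $H$, which is impossible for an antihole with at least $7$ vertices.
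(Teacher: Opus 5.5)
Your proof is correct, but it takes a different route from the paper. The paper builds $G[A]$ up in stages. It starts from the two-clique graph $G[X_2\cup Y_1]$ and successively adds $Z_1\cup\{v_1\}$, then $Z_3\cup\{v_3\}$, then $Z_4\cup\{v_4\}$. At each stage it checks that every added vertex has a clique as its neighbourhood in the current set, so that vertex lies in no odd hole or odd antihole. Verifying those neighbourhoods needs the exact relations of $Y_1$ to $Z_1$, $Z_3$, $Z_4$, which come from Claims~\ref{cla-7} and~\ref{cla-8}.

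You instead cut $G[A]$ along the clique $Y_1$. What you actually use is only that $Y_1$ is a clique and that $G[A]-Y_1$ is the disjoint union of the pairwise anticomplete cliques $X_2$, $Z_1\cup\{v_1\}$ and $Z_3\cup Z_4\cup\{v_3,v_4\}$. This follows from Claims~\ref{cla-4}, \ref{cla-9}, \ref{cla-10} and the definitions. So Claims~\ref{cla-7} and~\ref{cla-8}, and the unknown $X_2$--$Y_1$ adjacency, play no role in your argument. The price is that you must re-derive the fact that odd holes and odd antiholes cannot straddle a clique cutset. The paper only needs the trivial observation about simplicial vertices.

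One small correction to your last paragraph: the bound $|V(H)\cap Y_1|\ge |V(H)|-2$ does not follow. Let $a'$ and $b'$ be the other neighbours of $a$ and $b$ on the hole $\overline{H}$. Then $V(H)\setminus Y_1\subseteq\{a,b,a',b'\}$, and $b'$ may lie in the component of $a$. Symmetrically $a'$ may lie in the component of $b$, though not both at once, since $a'\sim b'$ in $H$. So you only get $|V(H)\cap Y_1|\ge |V(H)|-3$.

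This still suffices, because $|V(H)|-3>(|V(H)|-1)/2=\omega(H)$ when $|V(H)|\ge 7$. Alternatively, your Step~3 sentence closes directly once you say why $a$ and $b$ give a contradiction. Their common neighbourhood $V(H)\setminus\{a,a',b,b'\}$ contains two vertices that are consecutive on $\overline{H}$, hence nonadjacent in $H$. So it cannot lie inside the clique $Y_1$.
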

\begin{proof}
Since $X_2$ and $Y_1$ are two cliques by Claim~\ref{cla-4}, $G[X_2\cup Y_1]$ is perfect by Theorem~\ref{perfect}. For every vertex $x\in Z_1\cup \{v_1\}$, $N(x)\cap (X_2\cup Y_1\cup Z_1\cup \{v_1\})$ is a clique by Claims~\ref{cla-8}, \ref{cla-9}, and \ref{cla-10}, and so $G[X_2\cup Y_1\cup Z_1\cup \{v_1\}]$ is perfect. By Claims~\ref{cla-8}, \ref{cla-9}, and \ref{cla-10} again, for every vertex $x\in Z_3\cup \{v_3\}$, $N(x)\cap (X_2\cup Y_1\cup Z_1\cup Z_3\cup \{v_1,v_3\})$ is a clique, and so $G[X_2\cup Y_1\cup Z_1\cup Z_3\cup \{v_1,v_3\}]$ is perfect. By Claims~\ref{cla-7}, \ref{cla-9}, and \ref{cla-10}, for every vertex $x\in Z_4\cup \{v_4\}$, $N(x)\cap A$ is a clique, and so $G[A]$ is perfect. This proves Claim~\ref{cla-12}.
\end{proof}

\begin{claim}\label{cla-13}
	$\omega_w(G[B])<\omega_w(G)$.
\end{claim}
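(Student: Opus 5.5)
The plan is to show that every clique of $G[B]$ extends to a strictly larger clique of $G$ by adding one vertex of $A$. Weights are positive integers, so this gives, for every clique $K$ of $G[B]$, a vertex $a\in A$ with $w(K)<w(K)+w(a)\le\omega_w(G)$. Taking $K$ to be a maximum-weight clique of $G[B]$ then yields $\omega_w(G[B])<\omega_w(G)$. It therefore suffices to understand the cliques of $G[B]$, where $B=Z_2\cup Z_5\cup W_3\cup\{v,v_2,v_5\}$.

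First I will record the adjacencies inside $B$:
\begin{itemize}
\item By Claims~\ref{cla-2} and~\ref{cla-3}, $v$ is the unique vertex of $M$ and its neighbours lie in $X\cup Y$. Hence $v$ is anticomplete to $Z_2\cup Z_5\cup W_3\cup\{v_2,v_5\}$, so $v$ is an isolated vertex of $G[B]$.
\item By Claim~\ref{cla-9}, $G[V(C)\cup Z]$ is a clique blowup of the $5$-hole, with $Z_i\cup\{v_i\}$ the blown-up clique at position $i$. Positions $2$ and $5$ are nonadjacent, so $Z_2\cup\{v_2\}$ is anticomplete to $Z_5\cup\{v_5\}$.
\item By Claim~\ref{cla-11}, $W_3$ is anticomplete to $Z_2$.
\item Since $N_C(u)=\{v_3,v_4,v_5,v_1\}$ for $u\in W_3$, the set $W_3$ is anticomplete to $v_2$ and complete to $v_5$.
\end{itemize}
Consequently every clique $K$ of $G[B]$ is contained in one of three sets: $\{v\}$, $Z_2\cup\{v_2\}$, or $Z_5\cup W_3\cup\{v_5\}$. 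I do not need to know the adjacency between $Z_5$ and $W_3$, because only containment matters.

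Next I will exhibit an extending vertex of $A$ for each of the three types.
\begin{itemize}
\item For $K=\{v\}$, use $y_1\in Y_1\subseteq A$, which is adjacent to $v$ by Claim~\ref{cla-2}.
\item For $K\subseteq Z_2\cup\{v_2\}$, use $v_1\in A$: every vertex of $Z_2$ has $C$-neighbourhood $\{v_1,v_2,v_3\}$, and $v_1\sim v_2$.
\item For $K\subseteq Z_5\cup W_3\cup\{v_5\}$, again use $v_1$: vertices of $Z_5$ have $C$-neighbourhood $\{v_4,v_5,v_1\}$, vertices of $W_3$ are adjacent to $v_1$, and $v_1\sim v_5$.
\end{itemize}
In each case $K\cup\{a\}$ is a clique of $G$ with $a\in A$, which gives the inequality described above.

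The only point requiring care is the case analysis that every clique of $G[B]$ lies in one of the three sets. That rests on the anticompleteness of $Z_2$ to $Z_5$ from Claim~\ref{cla-9} and of $W_3$ to $Z_2$ from Claim~\ref{cla-11}, both of which are already available. Everything else is a direct read-off of the definitions of $Z_i$ and $W_i$.
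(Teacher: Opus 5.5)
Your proof is correct and is essentially the paper's argument. In both, $v$ is isolated in $G[B]$ and extends by $y_1\in Y_1$; $Z_2\cup\{v_2\}$ is anticomplete to the rest of $B$ by Claims~\ref{cla-2}, \ref{cla-9}, \ref{cla-11} and extends by $v_1$; and the remaining cliques lie in $Z_5\cup W_3\cup\{v_5\}$ and extend by a vertex of $A$ (the paper uses $v_4$ where you use $v_1$, phrasing the argument via maximal cliques of $G[B]$ not being maximal in $G$; both work).
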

\begin{proof}
It suffices to prove that for any maximal clique $K$ of $G[B]$, $K$ is not a maximal clique of $G$. Suppose to the contrary that $K\subseteq B$ and $K$ is a maximal clique of $G$. Since $v$ is anticomplete to $B\setminus \{v\}$ and $v$ is complete to $Y_1$ (Recall that $Y_1\ne\emptyset$)  by Claim~\ref{cla-2}, it follows that $v\notin K$. Since $Z_2\cup \{v_2\}$ is anticomplete to $B\setminus (Z_2\cup \{v_2\})$ by Claims~\ref{cla-2}, \ref{cla-9}, and \ref{cla-11}, we have that $K\cap (Z_2\cup \{v_2\})=\emptyset$ as $v_1$ is complete to $Z_2\cup \{v_2\}$. Therefore $K\subseteq  Z_5\cup W_3\cup \{v_5\}$. Then $v_4$ is complete to $K$, a contradiction. This proves Claim~\ref{cla-13}.
\end{proof}

Combining Claims~\ref{cla-12} and \ref{cla-13}, we can obtain a contradiction as $G$ is MNWD. This completes the proof of Theorem~\ref{4K1}. \qed

\section{Conclusions}

  Chen and Xu \cite{CX2025} proved that every (bull, $P_7,C_5$)-free graph is perfectly divisible. Let ${\cal G}$ be the class of ${\cal H}$-free graphs, where ${\cal H}\in$\{\{odd torch\}, \{even hole\},\{$4K_1$\}, \{$P_7,C_5$\}, \{$P_{11},C_4$\}, \{$P_{14}, C_{5},C_4$\}, \{$P_{17},C_6,C_5,C_4$\}\} and, let ${\cal C}$ be the class of bull-free graphs. In this paper, we show that the class of (bull, $H$)-free graphs is perfect-Pollyanna, where $H\in\{\text{house, hammer, diamond}\}$. Notice that every triangle-free graph in ${\cal G}$ is 3-colorable. By Theorems~\ref{oddballoon}, \ref{evenhole}, \ref{4K1} and Corollary~\ref{co-2}, we have that ${\cal G}\cap {\cal C}$ is perfectly divisible. These are two pieces of evidence suggesting that the class of bull-free graphs may be perfect-Pollyanna. Therefore, there are two interesting problems as follows (It is easy to verify that if the second problem is true, then the first problem is true.):
  
  \begin{problem}\label{p1}
  	Is it true that the class of bull-free graphs is perfect-Pollyanna$?$
  \end{problem}
  
  \begin{problem}\label{p2}
  	Is it true that every bull-free  MNWD graph is triangle-free$?$
  \end{problem}

	{	\section*{Declarations}
		\begin{itemize}
			\item Ran Chen is supported by Postgraduate Research and Practice Innovation Program of Jiangsu
			Province KYCX25\_1926. Di Wu is supported by   the Scientific Research
			Foundation of Nanjing Institute of Technology, China (No. YKJ202448).
			\item \textbf{Conflict of interest}\quad The authors declare no conflict of interest.
			\item \textbf{Data availibility statement}\quad This manuscript has no associated data.
	\end{itemize}} 
	
	{
	}

	\end{document}